\setlist{leftmargin=*}
\DeclarePairedDelimiter{\group}{(}{)}
\DeclarePairedDelimiter{\sqgroup}{[}{]}
\DeclarePairedDelimiter{\set}{\{}{\}}
\newcommand{\reals}{\mathbb{R}}
\newcommand{\extreals}{\overline{\mathbb{R}}}
\newcommand{\posreals}{\reals_{>}}
\newcommand{\nnegreals}{\reals_{\geq}}
\newcommand{\nats}{\mathbb{N}}
\newcommand{\natz}{\mathbb{N}_{0}}
\newcommand{\indica}[1]{\mathbb{I}_{#1}}
\newcommand{\upprev}{\overline{\mathrm{E}}}
\newcommand{\upprevvovkk}{\overline{\mathrm{E}}_\mathrm{V}}
\newcommand{\lowprev}{\underline{\mathrm{E}}}
\newcommand{\lowprevvovkk}{\underline{\mathrm{E}}_\mathrm{V}}
\newcommand{\lupprev}[1]{\overline{\mathrm{Q}}_{#1}}
\newcommand{\sit}{x_{1:n}}
\newcommand{\situa}[1]{x_{#1}}
\newcommand{\situation}[2]{x_{#1:#2}}
\newcommand{\martingale}{\mathscr{M}}
\newcommand{\setofextsupmartb}{\overline{\mathbb{M}}_\mathrm{b}}
\newcommand{\process}{\mathscr{P}}
\newcommand{\situations}{\mathscr{X}^\ast}
\newcommand{\statespace}{\mathscr{X}}
\newcommand{\statespaceseq}[2]{\mathscr{X}_{#1:#2}}
\newcommand{\samplespace}{\Omega}
\newcommand{\setofgengambles}{\mathscr{L}}
\newcommand{\setofgenextvariables}{\overline{\mathscr{L}}}
\newcommand{\setofgenextvariablesb}{\overline{\mathscr{L}}_b}
\newcommand{\setofgambles}{\mathbb{V}}
\newcommand{\setofextvariables}{\overline{\mathbb{V}}}
\newcommand{\setofextvariablesb}{\overline{\mathbb{V}}_b}
\newcommand{\setoflimitsoffinmeas}{\mathbb{V}_{\mathrm{lim}}}
\newcommand{\setoflimitsoffinmeasext}{\overline{\mathbb{V}}_{\mathrm{lim}}}
\newcommand{\setoflimitsoffinmeasb}{\overline{\mathbb{V}}_{b,\mathrm{lim}}}
\newcommand{\prodindica}[2]{#1\,\indica{#2}}
\newcommand{\andstate}{\,\cdot}
\newtheorem{definition}{Definition}{\bfseries}{\rmfamily}
\newtheorem{theorem}{Theorem}{\bfseries}
\newtheorem{proposition}[theorem]{Proposition}{\bfseries}
\newtheorem{corollary}[theorem]{Corollary}{\bfseries}
\newtheorem{lemma}[theorem]{Lemma}{\bfseries}
\begin{document}

\title{Continuity Properties of Game-theoretic Upper Expectations}
% \subtitle{} % optional
\author{Natan T'Joens \and Jasper De Bock \and Gert de Cooman}
\address{Electronics and Information Systems, Ghent University, Belgium}
\email{\{natan.tjoens,jasper.debock,gert.decooman\}@ugent.be}
\maketitle
\openup5pt

\section{Introduction}
Discrete-time uncertain processes, including discrete-time Markov processes, can be described mathematically in various ways.
For many, measure theory is the preferred framework for describing the uncertain dynamics of such processes.
We consider the alternative game-theoretic framework that was developed by Shafer and Vovk \cite{Shafer:2005wx}.
In particular, we study the mathematical properties of the upper expectations that appear in this theory.
Some of these, including Doob's Convergence Theorem and L\'evy's Zero-one Law, have already been proved by Shafer and Vovk.
For these results, our contribution consists in adapting them to our setting.
However, we also present several results that, to the best of our knowledge, appear here for the first time.
Important examples are continuity with respect to non-decreasing sequences and continuity with respect to specific sequences of so-called `finitary' functions, which depend only on a finite number of states.

We start by introducing upper (and lower) expectations on extended real-valued functions that are bounded below and explore their relation to the well-known concept of coherence.
These upper (and lower) expectations will then be used to model the dynamic behaviour of uncertain processes on a local level, allowing us to introduce the concept of a supermartingale; a particular kind of capital process that depends on the realisations of the process of interest.
Subsequently, to model the dynamic behaviour of uncertain processes on a global level, we introduce a particular game-theoretic upper expectation operator that fundamentally relies on this concept of a supermartingale. 
The remainder of the paper provides an overview, where we prove various mathematical properties for this game-theoretic upper expectation.
This article serves as a technical reference for an upcoming paper \cite{Tjoens2019NaturalExtensionISIPTA} that describes an alternative characterisation of this game-theoretic upper expectation as the most conservative uncertainty model satisfying a set of intuitive axioms.
For this reason, we omit from the present text any elaborate discussion of the interpretation and consequences of our results.

\section{Upper Expectations}\label{Sect: Upper Expectations}
We start this section with the introduction of a number of preliminary notions.
We denote the set of all natural numbers, without $0$, by $\nats$, and let $\natz \coloneqq\nats \cup \{0\}$. 
The set of extended real numbers is denoted by $\extreals \coloneqq\reals \cup \{+\infty, -\infty\}$.  
The set of positive real numbers is denoted by $\posreals$ and the set of non-negative real numbers by $\nnegreals$.
We extend the partial order relation $\leq$ on $\reals{}$ to $\extreals$ by positing that $-\infty<c<+\infty$ for all $c\in\reals{}$.

For any non-empty set $\mathscr{Y}$, a \emph{variable} $f$ on $\mathscr{Y}$ is a map on $\mathscr{Y}$.
We then define a \emph{gamble} on $\mathscr{Y}$ as a bounded real(-valued) variable on $\mathscr{Y}$ and an \emph{extended real variable} on $\mathscr{Y}$ as a variable on $\mathscr{Y}$ taking values in $\extreals$.
We say that an extended real variable $f$ is \emph{bounded below} if there is an $M\in\reals{}$ such that $f(y) \geq M$ for all $y\in\mathscr{Y}$ and bounded above if $-f$ is bounded below.
The set of all extended real variables on $\mathscr{Y}$ is denoted by $\setofgenextvariables(\mathscr{Y})$, the set of all bounded below extended real variables on $\mathscr{Y}$ by $\setofgenextvariablesb(\mathscr{Y})$ and the linear space of all gambles on $\mathscr{Y}$ by $\setofgengambles(\mathscr{Y})$.
For any $f\in\setofgenextvariables(\mathscr{Y})$ we use $\sup f$ and $\inf f$ to denote the supremum $\sup\{f(y) \colon y\in\mathscr{Y}\}$ and the infimum $\inf\{f(y) \colon y\in\mathscr{Y}\}$ of the variable $f$, respectively.
We say that a sequence $\{f_n\}_{n\in\natz}$ in $\setofgenextvariables(\mathscr{Y})$ is \emph{uniformly bounded below} if there is an $M\in\reals{}$ such that $f_n \geq M$ for all $n\in\natz$.
For any sequence $\{f_n\}_{n\in\natz}$ in $\setofgenextvariables(\mathscr{Y})$, we write $\lim_{n\to+\infty} f_n$ to mean the point-wise limit of the functions $f_n$, unless mentioned otherwise.
For a subset $A$ of $\mathscr{Y}$, we define the \emph{indicator} $\indica{A}$ of $A$ as the gamble on $\mathscr{Y}$ that assumes the value $1$ on $A$ and $0$ elsewhere.

To model uncertainty, we will use upper and lower expectations.
The following definition is similar to what Shafer and Vovk call an `outer probability content' \cite{shafer2011levy}.

\begin{definition}
Consider any non-empty set $\mathscr{Y}$.
Then we define an \emph{upper expectation} $\upprev$ on $\setofgenextvariablesb(\mathscr{Y})$ as an extended real-valued map on $\setofgenextvariablesb(\mathscr{Y})$ that satisfies the following four axioms:
\begin{enumerate}[leftmargin=*,ref={\upshape E\arabic*},label={\upshape E\arabic*}.,series=sepcoherence]
\item \label{coherence: const is const} 
$\upprev(c)=c$ for all $c\in\reals{}$; 
\item \label{coherence: sublinearity} 
$\upprev(f+g)\leq\upprev(f) + \upprev(g)$ for all $f,g\in\setofgenextvariablesb(\mathscr{Y})$;
\item \label{coherence: homog for ext lambda}
$\upprev(\lambda f)=\lambda \upprev(f)$ for all $\lambda\in\reals{}_{>} \cup\{+\infty\}$ and all non-negative $f\in\setofgenextvariablesb(\mathscr{Y})$.
\item \label{coherence: monotonicity}
if\/ $f \leq g$ then $\upprev(f)\leq\upprev(g)$ for all $f, g\in\setofgenextvariablesb(\mathscr{Y})$;
\end{enumerate}
\end{definition}
\noindent
Alternatively, we can also consider the so-called \emph{conjugate lower expectation}, defined by $\lowprev(f)\coloneqq-\upprev(-f)$ for all extended real variables $f$ on $\mathscr{Y}$ that are \emph{bounded above}. 
It clearly suffices to focus on only one of the two functionals and we will work mainly with upper expectations.

Note that in the definition above, as well as further on, we adopt the following conventions: $c +\infty=+\infty$, $+\infty+(+\infty)=+\infty$, $\lambda \cdot (+\infty)= +\infty$ and $0 \cdot (+\infty)=0$ for all real $c$ and all $\lambda\in\reals{}_{>} \cup\{+\infty\}$. 

As a consequence of their defining axioms, it can be shown that upper expectations satisfy various additional properties.

\begin{proposition}\label{prop: coherence properties}
For any non-empty set $\mathscr{Y}$ and any upper expectation $\upprev$ on $\setofgenextvariablesb(\mathscr{Y})$, we have that
\begin{enumerate}[leftmargin=*,ref={\upshape E\arabic*},label={\upshape E\arabic*}., resume=sepcoherence ]

\item \label{coherence: bounds}
$-\infty <\inf f\leq\upprev(f)\leq\sup f$ for all $f\in\setofgenextvariablesb(\mathscr{Y})$;
\item \label{coherence: const add}
$\upprev(f + \mu)=\upprev(f) + \mu$ for all $\mu\in\reals{} \cup \{+\infty\}$ and all $f\in\setofgenextvariablesb(\mathscr{Y})$;
\item \label{coherence: homog for real lambda}
$\upprev(\lambda f)=\lambda \upprev(f)$ for all $\lambda\in\nnegreals{}$ and all $f\in\setofgenextvariablesb(\mathscr{Y})$.
\item \label{coherence: low smaller than up} 
$\lowprev(f + g)\leq\upprev(f) + \lowprev(g)\leq\upprev(f+g)$ for all $f,g\in\setofgengambles(\mathscr{Y})$;
\item \label{coherence: uniform convergence}
if\/ $\lim_{n\to+\infty} \sup{\vert f-f_n \vert}=0$ then $\lim_{n\to+\infty} \vert \upprev(f)-\upprev(f_n) \vert=0$ and  $\lim_{n\to+\infty} \vert \lowprev(f)-\lowprev(f_n) \vert=0$ for any sequence $\{f_n\}_{n\in\natz}$ in $\setofgengambles(\mathscr{Y})$.
\end{enumerate}
\end{proposition}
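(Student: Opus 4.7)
The plan is to deduce the five properties in the order they are stated, each time using the axioms \ref{coherence: const is const}--\ref{coherence: monotonicity} and whatever has already been proved. For \ref{coherence: bounds}, I would first note that $\inf f>-\infty$ is precisely the bounded-below assumption, and then apply monotonicity \ref{coherence: monotonicity} to the pointwise inequalities $\inf f\leq f\leq \sup f$ together with \ref{coherence: const is const}. The only subtlety is when $\inf f$ or $\sup f$ equals $+\infty$, which I would dispatch either trivially (any real-valued quantity is $\leq+\infty$) or by taking $\upprev$ of arbitrarily large real constants and passing to the supremum to obtain $\upprev(+\infty)=+\infty$.

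For \ref{coherence: const add} with real $\mu$, both inequalities follow from sublinearity \ref{coherence: sublinearity}: one direction by writing $\upprev(f+\mu)\leq\upprev(f)+\upprev(\mu)=\upprev(f)+\mu$ directly, the other by the decomposition $f=(f+\mu)+(-\mu)$ and rearranging, which is legitimate because \ref{coherence: bounds} keeps $\upprev(f)$ and $\upprev(f+\mu)$ away from $-\infty$. The case $\mu=+\infty$ must be treated separately since $-\mu$ is unavailable; but then $f+\mu\equiv+\infty$, and both sides equal $+\infty$ by the same limiting argument used in \ref{coherence: bounds}. Property \ref{coherence: homog for real lambda} then extends \ref{coherence: homog for ext lambda} from non-negative to bounded-below $f$: I would fix a finite lower bound $M\leq f$, apply \ref{coherence: homog for ext lambda} to $\lambda(f-M)\geq 0$, and then undo the shift using \ref{coherence: const add} on both sides; the case $\lambda=0$ is immediate from \ref{coherence: const is const} and the convention $0\cdot(+\infty)=0$.

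For \ref{coherence: low smaller than up}, where $f,g$ are gambles and hence all values in sight are finite by \ref{coherence: bounds}, I would exploit the conjugacy $\lowprev(h)=-\upprev(-h)$ together with two applications of \ref{coherence: sublinearity}. The right inequality follows from $\upprev(f)=\upprev((f+g)+(-g))\leq\upprev(f+g)+\upprev(-g)$, which rearranges to $\upprev(f)+\lowprev(g)\leq\upprev(f+g)$; the left inequality follows symmetrically from the decomposition $\upprev(-g)=\upprev((-f-g)+f)\leq\upprev(-f-g)+\upprev(f)$. Finally, \ref{coherence: uniform convergence} is a straightforward sandwich: setting $\epsilon_n\coloneqq\sup\card{f-f_n}$, the inequalities $f-\epsilon_n\leq f_n\leq f+\epsilon_n$ combined with \ref{coherence: monotonicity} and \ref{coherence: const add} yield $\card{\upprev(f_n)-\upprev(f)}\leq\epsilon_n$, and the same bound for $\lowprev$ via conjugacy. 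The main obstacle throughout is the careful bookkeeping of $+\infty$ values in \ref{coherence: bounds}, \ref{coherence: const add}, and \ref{coherence: homog for real lambda}, where the usual `subtract-from-both-sides' manipulations are only legitimate after the relevant quantities have been pinned down as finite.
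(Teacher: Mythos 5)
Your proposal is correct and follows essentially the same route as the paper: \ref{coherence: bounds} via monotonicity against constants, \ref{coherence: const add} via the two sublinearity decompositions with the $\mu=+\infty$ case handled through $\upprev(+\infty)=+\infty$, \ref{coherence: homog for real lambda} by shifting to a non-negative variable, and \ref{coherence: low smaller than up} by the same two applications of \ref{coherence: sublinearity} under conjugacy. The only (harmless) deviation is in \ref{coherence: uniform convergence}, where you sandwich $f_n$ between $f\pm\epsilon_n$ and use \ref{coherence: monotonicity} and \ref{coherence: const add} directly, whereas the paper routes the same bound through \ref{coherence: low smaller than up} and \ref{coherence: bounds}; both give $\card{\upprev(f)-\upprev(f_n)}\leq\epsilon_n$.
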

\begin{proof}
\ref{coherence: bounds}:
Consider any $f\in\setofgenextvariablesb(\mathscr{Y})$.
If $\sup f=+\infty$, we trivially have that $\upprev(f)\leq\sup f$.
If $\sup f$ is real, it follows immediately from \ref{coherence: monotonicity} that $\upprev(f)\leq\upprev(\sup f)$ and therefore that $\upprev(f)\leq\sup f$ because of \ref{coherence: const is const}.
That $\sup f=-\infty$, is impossible because $f$ is bounded below.
To prove that $-\infty <\inf f\leq\upprev(f)$, note that $\inf f$ is real or equal to $+\infty$, because $f$ is bounded below.
Then for any real $\alpha <\inf f$ we clearly have that $\alpha < f$, implying by \ref{coherence: monotonicity} and \ref{coherence: const is const} that $\alpha < \upprev(f)$.
Since this holds for any $\alpha <\inf f$ we indeed have that $\inf f\leq\upprev(f)$.

\ref{coherence: const add}:
That $\upprev(f + \mu)\leq\upprev(f) + \mu$ for all real $\mu$ and all $f\in\setofgenextvariablesb(\mathscr{Y})$, follows directly from \ref{coherence: sublinearity} and \ref{coherence: const is const}.
The other inequality follows from the fact that  
\begin{align*}
\upprev(f)=\upprev(f +\mu-\mu) \overset{\text{\ref{coherence: sublinearity}}}{\leq} \upprev(f + \mu) + \upprev(- \mu) \overset{\text{\ref{coherence: const is const}}}{=} \upprev(f + \mu)-\mu,  
\end{align*}
 for all $f\in\setofgenextvariables(\mathscr{Y})$ and all real $\mu$.
 If $\mu=+\infty$, then since $\upprev(f) > -\infty$ [because of \ref{coherence: bounds}], we are left to show that $\upprev(+\infty)=+\infty$, which follows trivially from \ref{coherence: bounds}.

 \ref{coherence: homog for real lambda}:
 If $\lambda$ is positive, this follows trivially from \ref{coherence: homog for ext lambda} and \ref{coherence: const add}. 
 If $\lambda=0$, we have to prove that $\upprev(0)=0$, which follows immediately from \ref{coherence: const is const}.

 \ref{coherence: low smaller than up}:
 For all $f,g\in\setofgengambles(\mathscr{Y})$ we have that
 \begin{align*}
 \upprev(f)=\upprev(f + g-g) \overset{\text{\ref{coherence: sublinearity}}}{\leq} \upprev(f + g) + \upprev(- g)=\upprev(f + g)-\lowprev(g),
 \end{align*}
 where the last step follows from the definition of $\lowprev$.
 Hence, because $\lowprev(g)$ is real by \ref{coherence: bounds}, we have that $\upprev(f) + \lowprev(g)\leq\upprev(f + g)$ for all $f,g\in\setofgengambles(\mathscr{Y})$.
 The proof of the remaining inequality is completely analogous; it suffices to replace $\upprev$ by $\lowprev$ and $f$ by $g$ in the reasoning above.

\ref{coherence: uniform convergence}:
It is easy to see that, if $\lim_{n\to+\infty} \sup{\vert f-f_n \vert}=0$ for some sequence $\{f_n\}_{n\in\natz}$ in $\setofgengambles(\mathscr{Y})$, then $f$ is also a gamble and so is each $f-f_n$.
Hence, it follows from \ref{coherence: low smaller than up} that
\begin{align}\label{Eq: proof coherence uniform convergence}
\lowprev(f-f_n)\leq\upprev(f) + \lowprev(-f_n)=\upprev(f)-\upprev(f_n)\leq\upprev(f-f_n) \text{ for all } n\in\natz.
\end{align}
If we now apply \ref{coherence: bounds} to $\upprev(f-f_n)$, and \ref{coherence: bounds} and conjugacy to $\lowprev(f-f_n)$, it follows from \eqref{Eq: proof coherence uniform convergence} that $\inf (f-f_n)\leq\upprev(f)-\upprev(f_n)\leq\sup (f-f_n)$ for all  $n\in\natz$.
That $\lim_{n\to+\infty} \vert \upprev(f)-\upprev(f_n) \vert=0$ then follows from $\lim_{n\to+\infty} \sup{\vert f-f_n \vert}=0$.
The equality for the lower expectations $\lowprev$ moreover follows immediately from conjugacy together with the fact that $\sup{\vert (-f)- (-f_n) \vert}=\sup{\vert f-f_n \vert}$.
\end{proof}

\noindent
If $\mathscr{Y}$ is a \emph{finite} set, any upper expectation on $\setofgenextvariablesb(\mathscr{Y})$ additionally satisfies continuity with respect to non-decreasing sequences.

\begin{proposition}\label{Prop: local continuity wrt non-decreasing seq}
Consider a finite non-empty set $\mathscr{Y}$ and an upper expectation $\upprev$ on $\setofgenextvariablesb(\mathscr{Y})$.
Then $\upprev$ satisfies
\begin{enumerate}[leftmargin=26pt,ref={\upshape E\arabic*},label={\upshape E\arabic*}., resume=sepcoherence ]
\item \label{ext coherence continuity}
$\lim_{n\to+\infty} \upprev(f_n)=\upprev\left( \lim_{n\to+\infty} f_n \right)$ for any non-decreasing sequence $\{f_n\}_{n\in\natz}$ in $\setofgenextvariablesb(\mathscr{Y})$.
\end{enumerate} 
\end{proposition}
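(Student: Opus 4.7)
The plan is as follows. The inequality $\lim_{n\to+\infty} \upprev(f_n) \leq \upprev(f)$, where $f \coloneqq \lim_{n\to+\infty} f_n$, is immediate from the pointwise bound $f_n \leq f$ and \ref{coherence: monotonicity}, and does not require finiteness of $\mathscr{Y}$. All the work lies in the reverse inequality, and for that I would split into cases according to where $f$ takes the value $+\infty$. To simplify bookkeeping I would first use \ref{coherence: const add} to shift the sequence and assume without loss of generality that $f_n \geq 0$ for every $n \in \natz$, and then introduce $A \coloneqq \set{y \in \mathscr{Y} \colon f(y) = +\infty}$.

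If $A = \emptyset$, then $f$ is real-valued on the finite set $\mathscr{Y}$ and therefore a gamble, and each $f_n$ is sandwiched between $0$ and $f$ and is a gamble as well. Pointwise convergence on a finite set is uniform, so $\sup \vert f - f_n \vert \to 0$, and \ref{coherence: uniform convergence} closes this case.

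If $A \neq \emptyset$, I would further distinguish based on the value of $\upprev(\indica{A}) \in [0,1]$. When $\upprev(\indica{A}) > 0$, the bound $f \geq (+\infty)\indica{A}$ combined with \ref{coherence: monotonicity} and \ref{coherence: homog for ext lambda} forces $\upprev(f) = +\infty$; analogously, setting $M_n \coloneqq \min_{y \in A} f_n(y)$ (finite on the nonempty finite set $A$ and diverging to $+\infty$), the bound $f_n \geq M_n \indica{A}$ for $n$ large, together with \ref{coherence: monotonicity} and \ref{coherence: homog for real lambda}, gives $\upprev(f_n) \geq M_n \upprev(\indica{A}) \to +\infty$. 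When $\upprev(\indica{A}) = 0$, I would introduce the gambles $g$ and $g_n$ obtained from $f$ and $f_n$ by replacing their values on $A$ by $0$; the $g_n$ form a non-decreasing sequence of gambles with pointwise limit $g$, so the previous (gamble) case yields $\upprev(g_n) \to \upprev(g)$. The decomposition $f = g + (+\infty)\indica{A}$, together with \ref{coherence: sublinearity}, \ref{coherence: homog for ext lambda}, and the convention $0 \cdot (+\infty) = 0$, yields $\upprev(f) \leq \upprev(g)$, while $f_n \geq g_n$ and \ref{coherence: monotonicity} give $\upprev(f_n) \geq \upprev(g_n)$; combining these then gives $\lim_{n\to+\infty} \upprev(f_n) \geq \upprev(g) \geq \upprev(f)$.

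The main obstacle I anticipate is the handling of the set $A$: on it one cannot hope for uniform convergence of the $f_n$ to $f$, so \ref{coherence: uniform convergence} is not directly available, and the extended-real values must instead be absorbed into $+\infty$-multiples of indicator gambles via the extended homogeneity axiom \ref{coherence: homog for ext lambda}. Splitting on whether $\upprev(\indica{A})$ is positive or zero is what separates the case where both $\upprev(f)$ and $\lim_{n\to+\infty}\upprev(f_n)$ are forced to be $+\infty$ from the case where the ``infinite'' part contributes nothing to either side and the problem reduces to the already-settled gamble case.
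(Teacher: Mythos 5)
Your proposal is correct and follows essentially the same route as the paper's proof: reduce to non-negative variables, split on the set $A$ where the limit equals $+\infty$ and on whether $\upprev(\indica{A})$ is zero or positive, and combine uniform convergence on the finite complement of $A$ with the extended homogeneity axiom \ref{coherence: homog for ext lambda} on $A$ itself. The only negligible differences are that the paper folds your $A=\emptyset$ case into the $\upprev(\indica{A})=0$ case, and that it cites \ref{coherence: homog for ext lambda} alongside \ref{coherence: homog for real lambda} in the $\upprev(\indica{A})>0$ case to cover the possibility that $\min_{y\in A}f_n(y)=+\infty$ (your parenthetical claim that this minimum is finite need not hold, but in that subcase $\upprev(f_n)=+\infty$ follows at once).
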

\begin{proof}
Consider any non-decreasing sequence $\{f_n\}_{n\in\natz}$ in $\setofgenextvariablesb(\mathscr{Y})$ and let $f \coloneqq\lim_{n\to+\infty} f_n$.
Note that then also $f\in\setofgenextvariablesb(\mathscr{Y})$.
Moreover, because $\{f_n\}_{n\in\natz}$ is uniformly bounded below [by $\inf f_0$], we can assume without loss of generality that $f$ and all $f_n$ are non-negative .
Indeed, this can be achieved by adding the same sufficiently large constant to all of them and then applying \ref{coherence: const add}.

Because $\{f_n\}_{n\in\natz}$ is non-decreasing, we have that $f_n \leq f_{n+1} \leq f$ for all $n\in\natz$.
Then it follows from \ref{coherence: monotonicity} that $\upprev(f_n)\leq\upprev(f_{n+1})\leq\upprev(f)$ for all $n\in\natz$.
Hence, $\lim_{n\to+\infty} \upprev(f_n)$ exists and $\lim_{n\to+\infty} \upprev(f_n)\leq\upprev(f)$.
To prove the converse inequality, let $A \coloneqq\{ y\in\mathscr{Y} \colon f(y)=+\infty \}$ and consider the following two cases.
If $\upprev(\indica{A})=0$, we have that 
\begin{equation}\label{Eq: proof cont wrt nondecreasing seq 1}
\upprev(f) 
= \upprev\big((+\infty)\indica{A} + \prodindica{f}{A^c} \big) 
\overset{\text{\ref{coherence: sublinearity}}}{\leq} \upprev\big((+\infty)\indica{A}\big) + \upprev(\prodindica{f}{A^c}) 
\overset{\text{\ref{coherence: homog for ext lambda}}}{=} (+\infty) \upprev(\indica{A}) + \upprev(\prodindica{f}{A^c})
= \upprev(\prodindica{f}{A^c}).
\end{equation}
Because $\prodindica{f}{A^c}$ is real-valued [it cannot be $-\infty$ because it is non-negative] and $\mathscr{Y}$ is finite, $\prodindica{f}{A^c}$ is a gamble and $\{\prodindica{f_n}{A^c} \}_{n\in\natz}$ converges uniformly to $\prodindica{f}{A^c}$.
$\{\prodindica{f_n}{A^c} \}_{n\in\natz}$ is moreover also a sequence of gambles because it converges non-decreasingly to the gamble $\prodindica{f}{A^c}$.
Hence, it follows from \ref{coherence: uniform convergence} that
\begin{align*}
\upprev(\prodindica{f}{A^c}) 
= \lim_{n\to+\infty} \upprev(\prodindica{f_n}{A^c})
\overset{\text{\ref{coherence: monotonicity}}}{\leq} \lim_{n\to+\infty} \upprev(f_n),
\end{align*}
which, together with Equation \eqref{Eq: proof cont wrt nondecreasing seq 1}, leads to the desired inequality.

If $\upprev(\indica{A}) \not= 0$, we have that $\upprev(\indica{A}) > 0$ because of \ref{coherence: bounds}.
Furthermore, all $f_n$ are non-negative, and therefore
\begin{align}\label{Eq: proof cont wrt nondecreasing seq 2}
\upprev(f_n) 
\overset{\text{\ref{coherence: monotonicity}}}{\geq} \upprev(\prodindica{f_n}{A}) 
\overset{\text{\ref{coherence: monotonicity}}}{\geq} \upprev\group[\bigg]{\sqgroup[\Big]{\inf_{y\in A} f_n(y)}\,\indica{A}} 
\overset{\text{\ref{coherence: homog for ext lambda}, \ref{coherence: homog for real lambda}}}{=} \sqgroup[\Big]{\inf_{y\in A} f_n(y)}  \upprev(\indica{A}) \text{ for all } n\in\natz.
\end{align}
Since $\{f_n\}_{n\in\natz}$ converges to $+\infty$ on $A$ and $A$ is moreover finite, we have that $\lim_{n\to+\infty}\inf_{y\in A} f_n(y)=+\infty$.
This implies, together with $\upprev(\indica{A}) > 0$ and \eqref{Eq: proof cont wrt nondecreasing seq 2}, that $\lim_{n\to+\infty} \upprev(f_n)=+\infty$.
Hence, the desired inequality follows.
\end{proof}

\subsection{An Alternative Characterisation using Coherence}
The theory of upper expectations as described by Walley \cite{Walley:1991vk} --- who calls them upper previsions --- only considers gambles.
This allows for a clear behavioural interpretation in terms of attitudes towards gambling \cite{Walley:1991vk,Quaeghebeur:2012vw} , which in turn leads to a notion of rationality that he calls coherence. 
In particular, in that context, a real map $\upprev$ on the linear space $\setofgengambles(\mathscr{Y})$ of all gambles on some non-empty set $\mathscr{Y}$, is called \emph{coherent} if it satisfies the following three \emph{coherence axioms} \cite[Definition 2.3.3]{Walley:1991vk}:
\begin{enumerate}[leftmargin=*,ref={\upshape C\arabic*},label={\upshape C\arabic*}., series=sepcoherence ]
\item \label{sep coherence 1} 
$\upprev(f)\leq\sup f$ for all $f\in\setofgengambles(\mathscr{Y})$; 
\item \label{sep coherence 2} 
$\upprev(f+g)\leq\upprev(f) + \upprev(g)$ for all $f,g\in\setofgengambles(\mathscr{Y})$;
\item \label{sep coherence 3}
$\upprev(\lambda f)=\lambda \upprev(f)$ for all $\lambda\in\posreals{}$ and $f\in\setofgengambles(\mathscr{Y})$.
\end{enumerate}
One can easily show \cite[2.6.1]{Walley:1991vk} that these coherence axioms imply the following additional properties, with $\lowprev(f) \coloneqq -\upprev(-f)$ for all $f\in\setofgengambles(\mathscr{Y})$.
\begin{enumerate}[leftmargin=*,ref={\upshape C\arabic*},label=\normalfont{\upshape C\arabic*}.,resume=sepcoherence]
\item \label{sep coherence 4}
if\/ $f \leq g$ then $\upprev(f)\leq\upprev(g)$ for all $f, g\in\setofgengambles(\mathscr{Y})$;
\item \label{sep coherence 5}
$\inf f\leq\lowprev(f)\leq\upprev(f)\leq\sup f$ for all $f\in\setofgengambles(\mathscr{Y})$;
\item \label{sep coherence 6}
$\upprev(f + \mu)=\upprev(f) + \mu$ for all real $\mu$ and all $f\in\setofgengambles(\mathscr{Y})$;
\item \label{sep coherence 7}
$\lim_{n\to+\infty} \sup{\vert f-f_n \vert}=0 \Rightarrow \lim_{n\to+\infty} \vert \upprev(f)-\upprev(f_n) \vert=0$ for any sequence $\{f_n\}_{n\in\natz}$ in $\setofgengambles(\mathscr{Y})$.
\end{enumerate}

\noindent
It should be clear that any upper expectation satisfies~\ref{sep coherence 1}--\ref{sep coherence 3} and therefore, that its restriction to $\setofgengambles(\mathscr{Y})$ is coherent.
We now set out to prove that, if $\mathscr{Y}$ is finite, upper expectations can be characterised alternatively using \ref{sep coherence 1}--\ref{sep coherence 3} and the following weakened version of \ref{ext coherence continuity}:

\begin{enumerate}[leftmargin=29pt,ref={\upshape E\arabic**},label={\upshape E\arabic**}., start=10]
\item \label{ext coherence continuity gambles}
$\lim_{n\to+\infty} \upprev(f_n)=\upprev\left(\lim_{n\to+\infty} f_n \right)$ for any non-decreasing sequence $\{f_n\}_{n\in\natz}$ in $\setofgengambles(\mathscr{Y})$.
\end{enumerate}
We start with the following lemma.

\begin{lemma}\label{lemma: C1-C3 + cont.=upper exp.}
Consider any non-empty set $\mathscr{Y}$ and any extended real-valued map $\upprev$ on $\setofgenextvariablesb(\mathscr{Y})$.
If \/ $\upprev$ satisfies~\ref{sep coherence 1}--\ref{sep coherence 3} and \ref{ext coherence continuity gambles}, then $\upprev$ is an upper expectation on $\setofgenextvariablesb(\mathscr{Y})$.
\end{lemma}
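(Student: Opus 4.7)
The overall strategy is to upgrade the gamble-level axioms \ref{sep coherence 1}--\ref{sep coherence 3} to the extended-variable axioms \ref{coherence: const is const}--\ref{coherence: monotonicity} by approximating every $f\in\setofgenextvariablesb(\mathscr{Y})$ from below by the truncations $f_n\coloneqq\min\{f,n\}$. Each $f_n$ is bounded below (by a common lower bound of $f$) and above (by $n$), hence a gamble, and $\{f_n\}_{n\in\natz}$ is non-decreasing with pointwise limit $f$ (using the convention that $\min\{+\infty,n\}=n$). Hypothesis \ref{ext coherence continuity gambles} therefore yields the master identity $\upprev(f)=\lim_{n\to+\infty}\upprev(f_n)$, which is the engine of the entire proof.

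First I would dispose of \ref{coherence: const is const}: real constants are gambles, and the argument from \cite[2.6.1]{Walley:1991vk} recorded as \ref{sep coherence 4}--\ref{sep coherence 7} shows that \ref{sep coherence 1}--\ref{sep coherence 3} imply $\upprev(c)=c$ for real $c$, together with monotonicity and constant-additivity on $\setofgengambles(\mathscr{Y})$. Next I would handle \ref{coherence: monotonicity} and \ref{coherence: sublinearity}: if $f\leq g$ then $f_n\leq g_n$, so \ref{sep coherence 4} gives $\upprev(f_n)\leq\upprev(g_n)$, and passing to the limit via the master identity yields \ref{coherence: monotonicity}. For \ref{coherence: sublinearity}, a short check shows $f_n+g_n\nearrow f+g$ pointwise (examining separately the cases where $f$ and $g$ are real or $+\infty$); then \ref{sep coherence 2} gives $\upprev(f_n+g_n)\leq\upprev(f_n)+\upprev(g_n)$, and \ref{ext coherence continuity gambles} applied to the three non-decreasing gamble sequences delivers the inequality.

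The delicate step is \ref{coherence: homog for ext lambda}, especially for $\lambda=+\infty$. For real $\lambda>0$ and non-negative $f$, I would note that $\lambda f_n$ is still a gamble with $\lambda f_n\nearrow \lambda f$ and apply \ref{sep coherence 3} termwise, then take limits. For $\lambda=+\infty$, truncating $f$ alone is not enough because $(+\infty)f_n$ need not be a gamble; instead I would use the double truncation $g_n\coloneqq n\min\{f,n\}=nf_n$, which is a non-negative gamble and satisfies $g_n\nearrow (+\infty)f$ pointwise (the case $f(y)=0$ gives $0$ throughout, and $f(y)>0$ eventually forces $g_n(y)=n^2\to+\infty$). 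Then \ref{sep coherence 3} gives $\upprev(g_n)=n\upprev(f_n)$, and \ref{ext coherence continuity gambles} reduces the claim to showing $\lim_{n\to+\infty}n\upprev(f_n)=(+\infty)\upprev(f)$.

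The main obstacle is the last equality, because it hides the indeterminate form $(+\infty)\cdot 0$. I would split into cases on $\upprev(f)=\lim\upprev(f_n)$: if $\upprev(f)>0$, then $\upprev(f_n)$ is eventually bounded below by a positive constant and $n\upprev(f_n)\to+\infty=(+\infty)\upprev(f)$; if $\upprev(f)=+\infty$, the same product diverges; and if $\upprev(f)=0$, monotonicity (already proven) forces $0\leq\upprev(f_n)\leq\upprev(f)=0$ for every $n$, so $n\upprev(f_n)=0$ identically and the limit equals $(+\infty)\cdot 0=0$, matching our convention. This case analysis closes \ref{coherence: homog for ext lambda} and completes the identification of $\upprev$ as an upper expectation.
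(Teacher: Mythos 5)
Your proof is correct and follows essentially the same route as the paper's: approximate each bounded-below extended real variable from below by its gamble truncations $f^{\wedge n}$ and transfer \ref{sep coherence 1}--\ref{sep coherence 3} to \ref{coherence: const is const}--\ref{coherence: monotonicity} via \ref{ext coherence continuity gambles}. The only divergence is in the $\lambda=+\infty$ case, where the paper uses the sequence $n f^{\wedge c}$ for a fixed cutoff $c$ together with a separate monotonicity argument for the converse inequality, whereas you use the diagonal sequence $n f^{\wedge n}$ and a case analysis on the value of $\upprev(f)$; both correctly resolve the $(+\infty)\cdot 0$ indeterminacy.
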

\begin{proof}
Assume that $\upprev$ satisfies~\ref{sep coherence 1}--\ref{sep coherence 3} and \ref{ext coherence continuity gambles}.
We show that $\upprev$ then also satisfies~\ref{coherence: const is const}--\ref{coherence: monotonicity}.

That \ref{coherence: const is const} holds, follows immediately from \ref{sep coherence 5}.
To prove \ref{coherence: sublinearity}, first note that, for any $f\in\setofgenextvariablesb(\mathscr{Y})$, there is always a non-decreasing sequence $\{f_n\}_{n\in\natz}$ in $\setofgengambles(\mathscr{Y})$ such that $\lim_{n\to+\infty} f_n=f$.
To see this, it suffices to consider the sequence $\{f^{\wedge n}\}_{n\in\natz}$ defined by $f^{\wedge n}(y) \coloneqq\min \{f(y),n\}$ for all $y\in\mathscr{Y}$ and all $n\in\natz$.
Hence, for any two $f,g\in\setofgenextvariablesb(\mathscr{Y})$, we can consider two non-decreasing sequences $\{f_n\}_{n\in\natz}$ and $\{g_n\}_{n\in\natz}$ in $\setofgengambles(\mathscr{Y})$ that converge to $f$ and $g$ respectively.
It then follows from \ref{ext coherence continuity gambles} that $\lim_{n\to+\infty} \upprev(f_n)=\upprev(f)$ and $\lim_{n\to+\infty} \upprev(g_n)=\upprev(g)$.
Moreover, $\{f_n + g_n\}_{n\in\natz}$ is also a non-decreasing sequence in $\setofgengambles(\mathscr{Y})$ and clearly $\lim_{n\to+\infty} (f_n + g_n)=f + g$, which again implies by \ref{ext coherence continuity gambles} that $\lim_{n\to+\infty} \upprev(f_n + g_n)=\upprev(f + g)$.
All together, we have that
\begin{align*}
\upprev(f+g)=\lim_{n\to+\infty} \upprev(f_n + g_n) \overset{\text{\ref{sep coherence 2}}}{\leq} \lim_{n\to+\infty} \left[ \upprev(f_n) + \upprev(g_n)\right]
= \upprev(f) + \upprev(g),
\end{align*}
which concludes the proof of \ref{coherence: sublinearity}.

We prove \ref{coherence: monotonicity} in a similar way.
Consider any $f,g\in\setofgenextvariablesb(\mathscr{Y})$ such that $f \leq g$, and the non-decreasing sequences $\{f^{\wedge n}\}_{n\in\natz}$ and $\{g^{\wedge n}\}_{n\in\natz}$ in $\setofgengambles(\mathscr{Y})$ defined by $f^{\wedge n}(y) \coloneqq\min \{f(y),n\}$ and $g^{\wedge n}(y) \coloneqq\min \{g(y),n\}$ for all $y\in\mathscr{Y}$ and all $n\in\natz$.
Clearly, $f^{\wedge n} \leq g^{\wedge n}$ for all $n\in\natz$ and therefore $\upprev(f^{\wedge n})\leq\upprev(g^{\wedge n})$ by \ref{sep coherence 4}. 
Hence, $\lim_{n\to+\infty} \upprev(f^{\wedge n})\leq\lim_{n\to+\infty} \upprev(g^{\wedge n})$ and therefore, because of \ref{ext coherence continuity gambles}, also $\upprev(f)\leq\upprev(g)$.

That \ref{coherence: homog for ext lambda} holds for real $\lambda$, can be proven as before by using a non-decreasing sequence of gambles and applying \ref{ext coherence continuity gambles} and \ref{sep coherence 3}.
For $\lambda=+\infty$, we require the following more involved argument.
Consider any non-negative $f\in\setofgenextvariablesb(\mathscr{Y})$, fix an arbitrary $c\in\posreals{}$, and let $f^{\wedge c}$ be the gamble defined by  $f^{\wedge c}(y) \coloneqq\min \{ f(y) , c\}$ for all $y\in\mathscr{Y}$.
Then $-\infty < 0\leq\upprev(f^{\wedge c})\leq\upprev(f)$ by \ref{coherence: const is const}, \ref{coherence: monotonicity} and the non-negativity of $f^{\wedge c}$, and $\lambda f=(+\infty) f=(+\infty) f^{\wedge c}=\lim_{n\to+\infty} n f^{\wedge c}$.
Hence,
\begin{align*}
\upprev(\lambda f) 
= \upprev\left(\lim_{n\to+\infty} n f^{\wedge c}\right) 
\overset{\text{\ref{ext coherence continuity gambles}}}{=} \lim_{n\to+\infty} \upprev(n f^{\wedge c})
\overset{\text{\ref{sep coherence 3}}}{=} \lim_{n\to+\infty} n \upprev(f^{\wedge c})
= \lambda \upprev(f^{\wedge c}) 
\leq \lambda \upprev(f),
\end{align*}
where the last equality follows because $\upprev(f^{\wedge c}) \geq 0$ and the last inequality follows from the fact that $\lambda=+\infty$ and $0\leq\upprev(f^{\wedge c})\leq\upprev(f)$.
To see that the converse inequality holds, note that $n f\leq\lambda f$ for all $n\in\nats$ and therefore that $\upprev(n f)\leq\upprev(\lambda f)$ because of \ref{coherence: monotonicity}. 
Applying \ref{coherence: homog for ext lambda} (we already proved that it holds for a real factor) then implies that $n \upprev(f)\leq\upprev(\lambda f)$ for all $n\in\nats$.
Hence, taking into account that $-\infty < \upprev(f)$, we have that $\lambda \upprev(f)=\lim_{n\to+\infty} n \upprev(f)\leq\upprev(\lambda f)$.
\end{proof}

\begin{corollary}\label{corollary: E10' implies E10}
Consider any non-empty set $\mathscr{Y}$ and any extended real-valued map $\upprev$ on $\setofgenextvariablesb(\mathscr{Y})$ that satisfies~\ref{sep coherence 1}--\ref{sep coherence 3} and \ref{ext coherence continuity gambles}.
Then $\upprev$ also satisfies~\ref{ext coherence continuity}.
\end{corollary}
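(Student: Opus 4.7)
The plan is to leverage Lemma \ref{lemma: C1-C3 + cont.=upper exp.}, which tells us that $\upprev$ is already an upper expectation on $\setofgenextvariablesb(\mathscr{Y})$, hence satisfies \ref{coherence: const is const}--\ref{coherence: monotonicity} together with all the derived properties of Proposition \ref{prop: coherence properties}. The only new content to establish is that the continuity assumed on gambles propagates to arbitrary non-decreasing sequences of bounded-below extended real variables. I would handle this with a straightforward double truncation argument that invokes \ref{ext coherence continuity gambles} twice: once ``horizontally'' (in $n$) and once ``vertically'' (in the truncation level $k$).

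Concretely, let $\{f_n\}_{n\in\natz}$ be a non-decreasing sequence in $\setofgenextvariablesb(\mathscr{Y})$ with pointwise limit $f$, which is again bounded below. After adding a sufficiently large constant and invoking \ref{coherence: const add}, I may assume $f_n, f\geq 0$. For every $k\in\nats$, define the truncated gambles $f_n^{\wedge k}(y) \coloneqq \min\{f_n(y),k\}$ and $f^{\wedge k}(y) \coloneqq \min\{f(y),k\}$, all of which lie in $\setofgengambles(\mathscr{Y})$. For fixed $k$, the sequence $\{f_n^{\wedge k}\}_{n\in\natz}$ is non-decreasing in $\setofgengambles(\mathscr{Y})$ and converges pointwise to the gamble $f^{\wedge k}$, so \ref{ext coherence continuity gambles} yields $\lim_{n\to+\infty}\upprev(f_n^{\wedge k})=\upprev(f^{\wedge k})$. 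Combined with $f_n^{\wedge k}\leq f_n$ and \ref{coherence: monotonicity}, this gives $\upprev(f^{\wedge k})\leq\lim_{n\to+\infty}\upprev(f_n)$ (the latter limit exists in $\extreals$ because $\upprev(f_n)$ is non-decreasing in $n$).

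Next, the sequence $\{f^{\wedge k}\}_{k\in\nats}$ is a non-decreasing sequence of gambles with pointwise limit $f\in\setofgenextvariablesb(\mathscr{Y})$, so a second application of \ref{ext coherence continuity gambles} gives $\lim_{k\to+\infty}\upprev(f^{\wedge k})=\upprev(f)$. Letting $k\to+\infty$ in the inequality obtained above yields $\upprev(f)\leq\lim_{n\to+\infty}\upprev(f_n)$. The reverse inequality is immediate since $f_n\leq f$ and \ref{coherence: monotonicity} imply $\upprev(f_n)\leq\upprev(f)$ for every $n$, and hence the two limits coincide, which is exactly \ref{ext coherence continuity}.

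I do not expect any real obstacle: the subtle point is only noticing that \ref{ext coherence continuity gambles} already applies when the limit of the non-decreasing sequence of gambles fails to be a gamble itself (being merely in $\setofgenextvariablesb(\mathscr{Y})$), since the axiom is stated precisely for sequences \emph{in} $\setofgengambles(\mathscr{Y})$ without requiring the pointwise limit to remain there. This is what makes the ``vertical'' application in the second step of the argument legitimate and allows the finiteness assumption in Proposition \ref{Prop: local continuity wrt non-decreasing seq} to be dropped here.
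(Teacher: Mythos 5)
Your proof is correct and rests on the same two ingredients as the paper's: truncation to reduce to gambles where \ref{ext coherence continuity gambles} applies, plus the monotonicity \ref{coherence: monotonicity} supplied by Lemma~\ref{lemma: C1-C3 + cont.=upper exp.}. The only difference is cosmetic: the paper collapses your iterated limit into a single diagonal truncation $f_n^{\wedge n}(y)\coloneqq\min\{f_n(y),n\}$, which converges directly to $f$ and needs only one application of \ref{ext coherence continuity gambles}, whereas you apply it twice (first in $n$ for fixed truncation level $k$, then in $k$); your normalisation to non-negative variables is likewise harmless but unnecessary. Your closing observation --- that \ref{ext coherence continuity gambles} only requires the terms of the sequence to be gambles, not the limit --- is exactly the point that makes both versions of the argument work.
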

\begin{proof}

Consider any non-decreasing sequence $\{f_n\}_{n\in\natz}$ in $\setofgenextvariablesb{}(\mathscr{Y})$ and let $f \coloneqq \lim_{n \to +\infty} f_n$.
Let $\{f^{\wedge n}_n\}_{n\in\natz}$ be the sequence defined by $f^{\wedge n}_n(y) \coloneqq \min \{f_n(y), n \}$ for all $y \in \mathscr{Y}$ and all $n\in\natz$.
Clearly, $\{f^{\wedge n}_n\}_{n\in\natz}$ is then also a non-decreasing sequence in $\setofgenextvariablesb{}(\mathscr{Y})$ that converges point-wise to $f$.
It is moreover a sequence of gambles because every $f_n$ is also bounded above by $n$.
Hence, by \ref{ext coherence continuity gambles}, 
\begin{align}\label{Eq: proof corollary E10' implies E10}
\upprev{}(f) = \upprev{}\left(\lim_{n \to +\infty} f^{\wedge n}_n\right) = \lim_{n \to +\infty} \upprev{}( f^{\wedge n}_n).
\end{align}

Lemma \ref{lemma: C1-C3 + cont.=upper exp.} guarantees that $\upprev{}$ is an upper expectation, implying that it satisfies \ref{coherence: monotonicity}.
Hence, because $f^{\wedge n}_n \leq f_n$, we have that $\upprev{}(f^{\wedge n}_n) \leq \upprev{}(f_n)$ for all $n\in\natz$, and therefore also $\lim_{n \to +\infty} \upprev{}(f^{\wedge n}_n) \leq \lim_{n \to +\infty} \upprev{}(f_n)$.
Then it follows from Equation \eqref{Eq: proof corollary E10' implies E10} that $\upprev{}(f) \leq \lim_{n \to +\infty} \upprev{}(f_n)$.
To show that also $\upprev{}(f) \geq \lim_{n \to +\infty} \upprev{}(f_n)$, it suffices to note that $f \geq f_n$ and again apply \ref{coherence: monotonicity}.
\end{proof}

\begin{proposition}\label{Prop: alt. characterisation upper exp.}
Consider any finite non-empty set $\mathscr{Y}$ and any extended real-valued map $\upprev$ on $\setofgenextvariablesb(\mathscr{Y})$.
Then $\upprev$ is an upper expectation if and only if it satisfies~\ref{sep coherence 1}--\ref{sep coherence 3} and \ref{ext coherence continuity gambles}.
\end{proposition}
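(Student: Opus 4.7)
The proposition is essentially a packaging of results already established in the excerpt, so my plan is to split it into the two implications and invoke the machinery we already have.

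For the \emph{``if''} direction, I would simply apply Lemma~\ref{lemma: C1-C3 + cont.=upper exp.} verbatim: it states, with no finiteness assumption, that any map on $\setofgenextvariablesb(\mathscr{Y})$ satisfying \ref{sep coherence 1}--\ref{sep coherence 3} and \ref{ext coherence continuity gambles} already satisfies \ref{coherence: const is const}--\ref{coherence: monotonicity}, i.e.\ is an upper expectation. So there is nothing new to prove here; finiteness of $\mathscr{Y}$ is not needed for this direction.

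For the \emph{``only if''} direction, I assume $\upprev$ is an upper expectation and unpack what needs to be checked. The axioms \ref{sep coherence 1}--\ref{sep coherence 3} are immediate: \ref{sep coherence 1} is a special case of \ref{coherence: bounds} restricted to $\setofgengambles(\mathscr{Y})$, \ref{sep coherence 2} is \ref{coherence: sublinearity} restricted to gambles, and \ref{sep coherence 3} follows from \ref{coherence: homog for ext lambda} (or equivalently \ref{coherence: homog for real lambda}) restricted to positive real $\lambda$ and gambles. This is exactly the observation already made in the paragraph preceding the statement.

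The only substantive point is \ref{ext coherence continuity gambles}, and this is precisely where the hypothesis that $\mathscr{Y}$ is finite enters: Proposition~\ref{Prop: local continuity wrt non-decreasing seq} gives the stronger continuity property \ref{ext coherence continuity} for non-decreasing sequences in all of $\setofgenextvariablesb(\mathscr{Y})$, and \ref{ext coherence continuity gambles} is nothing but \ref{ext coherence continuity} restricted to sequences in the smaller set $\setofgengambles(\mathscr{Y})$, so it follows a fortiori. The ``main obstacle'' is therefore not located in this proof at all but in the two pillars it rests on, Lemma~\ref{lemma: C1-C3 + cont.=upper exp.} and Proposition~\ref{Prop: local continuity wrt non-decreasing seq}; the present proposition is just the clean equivalence that results from combining them.
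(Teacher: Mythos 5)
Your proposal is correct and follows exactly the same route as the paper: the ``only if'' direction gets \ref{sep coherence 1}--\ref{sep coherence 3} trivially and \ref{ext coherence continuity gambles} from Proposition~\ref{Prop: local continuity wrt non-decreasing seq}, while the ``if'' direction is Lemma~\ref{lemma: C1-C3 + cont.=upper exp.}. Your added remarks on where finiteness is (and is not) needed are accurate but not part of the paper's argument.
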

\begin{proof}
If $\upprev$ is an upper expectation, it trivially satisfies~\ref{sep coherence 1}--\ref{sep coherence 3}.
Moreover, \ref{ext coherence continuity gambles} is also satisfied because of Proposition~\ref{Prop: local continuity wrt non-decreasing seq}.
The converse implication follows from Lemma~\ref{lemma: C1-C3 + cont.=upper exp.}.
\end{proof}

We end by proving a convenient countable super-additivity property.

\begin{lemma}\label{lemma: countable subadditivity}
Consider any non-empty set $\mathscr{Y}$ and any extended real-valued map $\upprev$ on $\setofgenextvariablesb(\mathscr{Y})$ that satisfies~\ref{sep coherence 1}--\ref{sep coherence 3} and \ref{ext coherence continuity gambles}.
Then 
$\upprev\left(\sum_{n\in\nats} f_n\right)\leq\sum_{n\in\nats} \upprev(f_n)$ 
for all non-negative sequences $\{f_n\}_{n\in\nats}$ in $\setofgenextvariablesb(\mathscr{Y})$.
\end{lemma}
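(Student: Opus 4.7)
The plan is to reduce the countable statement to the finite one by exploiting the continuity property that the hypotheses grant us. First, by Lemma~\ref{lemma: C1-C3 + cont.=upper exp.}, the assumptions on $\upprev$ imply that it is an upper expectation on $\setofgenextvariablesb(\mathscr{Y})$, and hence, by Corollary~\ref{corollary: E10' implies E10}, it also satisfies~\ref{ext coherence continuity}. So I can freely use all of~\ref{coherence: const is const}--\ref{ext coherence continuity} in what follows, most importantly the sublinearity~\ref{coherence: sublinearity}, the monotonicity~\ref{coherence: monotonicity}, and the continuity~\ref{ext coherence continuity} on non-decreasing sequences in $\setofgenextvariablesb(\mathscr{Y})$.

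Next, I would introduce the partial sums $S_N \coloneqq \sum_{n=1}^{N} f_n$ for $N\in\nats$. Since each $f_n$ is non-negative, each $S_N$ is non-negative and therefore belongs to $\setofgenextvariablesb(\mathscr{Y})$; moreover, the sequence $\{S_N\}_{N\in\nats}$ is non-decreasing and converges point-wise to $S \coloneqq \sum_{n\in\nats} f_n$, where $S$ takes values in $[0,+\infty]$ and hence also belongs to $\setofgenextvariablesb(\mathscr{Y})$. Iterating~\ref{coherence: sublinearity} by induction on $N$ yields the finite subadditivity bound
\begin{equation*}
\upprev(S_N) \leq \sum_{n=1}^{N} \upprev(f_n) \text{ for all } N\in\nats.
\end{equation*}

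Finally, I would apply~\ref{ext coherence continuity} to the non-decreasing sequence $\{S_N\}_{N\in\nats}$, giving $\upprev(S) = \lim_{N\to+\infty} \upprev(S_N)$, and then pass to the limit in the finite subadditivity inequality:
\begin{equation*}
\upprev\group[\bigg]{\sum_{n\in\nats} f_n}
= \lim_{N\to+\infty} \upprev(S_N)
\leq \lim_{N\to+\infty} \sum_{n=1}^{N} \upprev(f_n)
= \sum_{n\in\nats} \upprev(f_n),
\end{equation*}
where the final equality is just the definition of the infinite sum of non-negative terms (each $\upprev(f_n)\geq 0$ by~\ref{coherence: bounds}). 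I do not foresee a real obstacle here: the only thing to double-check is the book-keeping ensuring that $S_N$ and $S$ all lie in $\setofgenextvariablesb(\mathscr{Y})$ so that~\ref{ext coherence continuity} is applicable, and that is immediate from non-negativity of the summands.
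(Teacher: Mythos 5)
Your proposal is correct and follows essentially the same route as the paper's proof: form the non-decreasing partial sums, obtain finite subadditivity by iterating \ref{coherence: sublinearity} (justified because Lemma~\ref{lemma: C1-C3 + cont.=upper exp.} makes $\upprev$ an upper expectation), and pass to the limit using \ref{ext coherence continuity} via Corollary~\ref{corollary: E10' implies E10}. The only difference is cosmetic book-keeping, such as your explicit remark that the $\upprev(f_n)$ are non-negative.
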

\begin{proof}
Consider the sequence $\{g_n\}_{n\in\nats}$ in $\setofgenextvariablesb(\mathscr{Y})$ defined by $g_n \coloneqq\sum_{i=1}^{n} f_i$ for all $n\in\nats$.
Then, $\{g_n\}_{n\in\nats}$ is non-decreasing because $\{f_n\}_{n\in\nats}$ is non-negative.
Moreover, it is clear that $\{g_n\}_{n\in\nats}$ converges point-wise to $\sum_{n\in\nats} f_n$.
Hence, by Corollary~\ref{corollary: E10' implies E10}, we can apply \ref{ext coherence continuity} to find that 
\begin{align*}
\upprev\left(\sum_{n\in\nats} f_n\right) 
\overset{\text{\ref{ext coherence continuity}}}{=} \lim_{n\to+\infty} \upprev(g_n) 
= \lim_{n\to+\infty} \upprev\left(\sum_{i=1}^{n} f_i\right)
\overset{\text{\ref{coherence: sublinearity}}}{\leq}  \lim_{n\to+\infty} \sum_{i=1}^{n} \upprev(f_i)
= \sum_{n\in\nats} \upprev(f_n),
\end{align*}
where we can apply \ref{coherence: sublinearity} because $\upprev$ is an upper expectation by Lemma~\ref{lemma: C1-C3 + cont.=upper exp.}.
\end{proof}

\begin{corollary}\label{corollary: countable subadditivity for upper exp}
Consider any finite non-empty set $\mathscr{Y}$ and any upper expectation $\upprev$ on $\setofgenextvariablesb(\mathscr{Y})$.
Then 
$\upprev\left(\sum_{n\in\nats} f_n\right)\leq\sum_{n\in\nats} \upprev(f_n)$ 
for all non-negative sequences $\{f_n\}_{n\in\nats}$ in $\setofgenextvariablesb(\mathscr{Y})$.
\end{corollary}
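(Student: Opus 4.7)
The plan is to observe that this corollary is an essentially immediate consequence of Lemma~\ref{lemma: countable subadditivity} combined with Proposition~\ref{Prop: alt. characterisation upper exp.}. The lemma already establishes the countable sub-additivity inequality under the hypothesis that $\upprev$ satisfies \ref{sep coherence 1}--\ref{sep coherence 3} and \ref{ext coherence continuity gambles}; the only task is to verify that these hypotheses hold in the present setting.

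Concretely, I would proceed as follows. First, I would invoke Proposition~\ref{Prop: alt. characterisation upper exp.}: since $\mathscr{Y}$ is finite and $\upprev$ is an upper expectation on $\setofgenextvariablesb(\mathscr{Y})$, that proposition guarantees that $\upprev$ satisfies \ref{sep coherence 1}--\ref{sep coherence 3} as well as the restricted continuity property \ref{ext coherence continuity gambles}. Second, with these properties in hand, Lemma~\ref{lemma: countable subadditivity} applies verbatim to $\upprev$, yielding the desired inequality $\upprev(\sum_{n\in\nats} f_n) \leq \sum_{n\in\nats} \upprev(f_n)$ for any non-negative sequence $\{f_n\}_{n\in\nats}$ in $\setofgenextvariablesb(\mathscr{Y})$.

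There is no real obstacle here, since the corollary is obtained by chaining together results that have already been established; the only thing worth emphasising is that the finiteness of $\mathscr{Y}$ is used precisely to invoke Proposition~\ref{Prop: alt. characterisation upper exp.} (equivalently, Proposition~\ref{Prop: local continuity wrt non-decreasing seq}), which is what upgrades an upper expectation to a map satisfying the continuity axiom \ref{ext coherence continuity gambles} needed by Lemma~\ref{lemma: countable subadditivity}. The proof in the paper can therefore consist of one or two sentences citing these two results.
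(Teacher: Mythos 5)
Your proposal is correct and matches the paper's proof exactly: the paper likewise derives the corollary by noting that Proposition~\ref{Prop: alt. characterisation upper exp.} (using the finiteness of $\mathscr{Y}$) guarantees that $\upprev$ satisfies~\ref{sep coherence 1}--\ref{sep coherence 3} and \ref{ext coherence continuity gambles}, and then applying Lemma~\ref{lemma: countable subadditivity}.
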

\begin{proof}
This follows immediately from Lemma~\ref{lemma: countable subadditivity} and the fact that $\upprev$ satisfies~\ref{sep coherence 1}--\ref{sep coherence 3} and \ref{ext coherence continuity gambles} by Proposition~\ref{Prop: alt. characterisation upper exp.}.
\end{proof}

\section{Uncertain Processes}
We consider a sequence of uncertain states $X_1, X_2, ..., X_n , ...$ where the state $X_k$ at each discrete time $k\in\nats$ takes values in some fixed non-empty \emph{finite} set $\statespace$, called the \emph{state space}.
Such a sequence will be called an \emph{uncertain (finite state) process}.
We call any $\sit \coloneqq (x_1,...,x_n)\in\statespace_{1:n} \coloneqq\statespace^n$, for $n\in\natz$, a \emph{situation} and we denote the set of all situations by $\statespace^\ast \coloneqq\cup_{n\in\natz} \mathscr{X}_{1:n}$.
So any finite string of possible values for a sequence of consecutive states is called a situation.
In particular, the unique empty string $x_{1:0}$, denoted by $\Box$, is called the \emph{initial situation}, and $\statespace_{1:0} \coloneqq\{\Box\}$.

An infinite sequence of state values $\omega$ is called a \emph{path}, and the set of all paths is called the \emph{sample space} $\samplespace \coloneqq\statespace^\nats$.
For any path $\omega\in\samplespace$, the initial sequence that consists of its first $n$ state values is a situation in $\statespaceseq{1}{n}$ that is denoted by $\omega^n$. 
The $n$-th state value is denoted by $\omega_n\in\statespace$.
% In particular, we have $\omega^0=\omega_0=\Box$ for all $\omega\in\samplespace$.

We will distinguish between local variables and global variables.
A \emph{local} variable is a variable on the set $\statespace$ of all state values, whereas a \emph{global} variable is a variable on the set $\samplespace$ of all paths.
We will show below how we can associate a global variable with any local one.
We denote the set of all \emph{global} extended real variables by $\setofextvariables \coloneqq\setofgenextvariables(\samplespace)$, and similarly for $\setofextvariablesb \coloneqq\setofgenextvariablesb(\samplespace)$ and $\setofgambles \coloneqq\setofgengambles(\samplespace)$.

For any natural $k\leq\ell$, we use $X_{k:\ell}$ to denote the global variable that assumes the value $X_{k:\ell}(\omega)\coloneqq(\omega_k,...,\omega_\ell)$ on the path $\omega\in\samplespace$.
As such, the state $X_k=X_{k:k}$ at any discrete time $k$ can also be regarded as a global variable.
For any $m, n\in\nats$ and any map $f \colon \statespace^n \to \extreals$, we will write $f(X_{m+1:m+n})$ to denote the extended real global variable defined by $f(X_{m+1:m+n}) \coloneqq f \circ X_{m+1:m+n}$.
In particular, we can associate a global variable $f(X_n)$ with any local variable $f \colon \statespace \to \extreals$ and any index $n\in\nats$.

A collection of paths $A \subseteq \samplespace$ is called an \emph{event}.
% The \emph{indicator} $\indica{A}$ of an event $A$ is defined as the global variable that assumes the value $1$ on $A$ and $0$ elsewhere.
With any situation $\sit$, we associate the \emph{cylinder event} $\Gamma(\sit)\coloneqq\{\omega\in\samplespace\colon \omega^n=\sit\}$: the set of all paths $\omega\in\samplespace$ that `go through' the situation $\sit$.
Sometimes, when it is clear from the context, we will also use the notation `$\sit$' to denote the \emph{set} $\Gamma(\sit)$.
So for example, $\indica{\sit}$ is equal to $\indica{\Gamma(\sit)}$.
Moreover, for any two variables $g,h\in\setofextvariables$ and any situation~$s\in\situations$, we use $g \leq_s f$ to denote that $g(\omega) \leq f(\omega)$ for all $\omega\in\Gamma(s)$, and similarly for $\geq_s$, $>_s$ and $<_s$.

For a given $n\in\natz$, we call a global variable $f$ \emph{$n$-measurable} if it is constant on the cylinder events $\Gamma(\sit)$ for all $\sit\in\statespaceseq{1}{n}$, that is, if $f=\tilde{f}(X_{1:n})$ for some map $\tilde{f}$ on $\statespace^n$. 
We will then also use the notation $f(\sit)$ for its constant value $f(\omega)$ on all paths $\omega\in\Gamma(\sit)$.
Similarly, for a global variable $f$ that only depends on the $n$-th state $X_n$, we will use $f(x_n)$ to denote its constant value on the event $\{\omega\in\Omega \colon \omega_n=x_n\}$.
We call a global variable $f\in\setofextvariables$ \emph{finitary} if it is $n$-measurable for some $n\in\natz$.

% In accordance with the notation above, all extended real variables, bounded below extended real variables and gambles on $\samplespace$ that are $n$-measurable are gathered in the respective sets $\setofnextvariables{}$, $\setofnextvariablesb{}$ and $\setofngambles{}$.
We will also use the notation $\setoflimitsoffinmeasext{}$ for the set of all extended real global variables $f$ that are the point-wise limit of some sequence of finitary variables.
Furthermore, $\setoflimitsoffinmeasb{}$ is the set of all bounded below (extended real) variables in $\setoflimitsoffinmeasext{}$, and $\setoflimitsoffinmeas{}$ the set of all gambles in $\setoflimitsoffinmeasext{}$.

% \begin{lemma}\label{lemma: limits of n-meas gambles equal to limits of n-meas variables}
% For any $f\in\setoflimitsoffinmeas{}$ there is a sequence of $n$-measurable gambles $\{f_n\}_{n\in\natz}$ that is uniformly bounded above and below, such that it converges point-wise to $f$.
% \end{lemma}
% \begin{proof}
% Since $f\in\setoflimitsoffinmeas{}$, there is a sequence of $n$-measurable, possibly extended real, variables $\{f_n\}_{n\in\natz}$ that converges point-wise to $f$.
% Moreover, $f$ is a gamble, so $\inf f\in\reals{}$ and $\sup f\in\reals{}$. 
% Then it is easy to see that the sequence $\{f'_n\}_{n\in\natz}$ constructed by bounding $\{f_n\}_{n\in\natz}$ above by $\sup f$ and below by $\inf f$ --- so, it is defined by $f'_n(\omega) \coloneqq\min\{ \max\{f_n(\omega),\inf f \}, \sup f\}$ for all $\omega\in\samplespace$ and all $n\in\natz$ --- is a sequence of $n$-measurable gambles that is uniformly bounded above and below.
% Furthermore, $\{f'_n\}_{n\in\natz}$ converges point-wise to $f$ because $\{f_n\}_{n\in\natz}$ converges point-wise to $f$ and $\vert f(\omega)-f'_n(\omega) \vert\leq\vert f(\omega)-f_n(\omega) \vert$ for any $n\in\natz$ and all $\omega\in\samplespace$.
% \end{proof}

\begin{lemma}\label{lemma: limits of n-measurables are equal to limits of finitary}
For any $f\in\setofextvariables{}$, we have that $f\in\setoflimitsoffinmeasb{}$ if and only if $f$ is the point-wise limit of some sequence $\{f_n\}_{n\in\natz}$ of $n$-measurable gambles that is uniformly bounded below such that $f_n \leq \sup f$ for all $n \in \natz{}$.
If \/ $\inf f \in \reals$, then we can moreover guarantee that $\inf f \leq f_n$ for all $n \in \natz{}$.
\end{lemma}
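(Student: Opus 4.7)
The plan is to dispatch the easy direction first and concentrate the work on the forward implication. The \emph{if} direction is immediate once one observes that any $n$-measurable gamble is finitary and that a uniform lower bound on the approximants $f_n$ transfers to the pointwise limit $f$, so $f \in \setoflimitsoffinmeasb$. The substantive task is the \emph{only if} direction, where I have to upgrade an arbitrary witnessing sequence of finitary variables into one whose $n$-th term is both $n$-measurable and a gamble (hence bounded), while also respecting the constraints $M \leq f_n \leq \sup f$.

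My construction will proceed as follows. Starting from $f \in \setoflimitsoffinmeasb$, I fix a witnessing sequence $\{g_k\}_{k \in \natz}$ of finitary variables converging pointwise to $f$, with each $g_k$ being $m_k$-measurable for some $m_k \in \natz$, and fix a real lower bound $M$ for $f$, taking $M \coloneqq \inf f$ whenever $\inf f \in \reals$. To force the $n$-th term to be $n$-measurable, I reindex by
\[
k(n) \coloneqq \max\{k \in \{0, 1, \ldots, n\} : m_k \leq n\},
\]
after augmenting the sequence with $g_0 \coloneqq 0$ (and $m_0 \coloneqq 0$) so that the defining set is non-empty. This makes $g_{k(n)}$ automatically $n$-measurable, and a direct argument shows $k(n) \to +\infty$: for any $K \in \nats$, once $n \geq \max\{K, m_1, \ldots, m_K\}$ the index $K$ lies in the set, so $k(n) \geq K$. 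I then clamp, setting $B_n \coloneqq \sup f$ when $\sup f < +\infty$ and $B_n \coloneqq n$ when $\sup f = +\infty$, and defining
\[
f_n \coloneqq \max\{M, \min\{g_{k(n)}, B_n\}\}.
\]

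What remains is verification. The algebraic properties hold by inspection: $f_n$ is $n$-measurable (as a min/max of $g_{k(n)}$ with constants), lies in $[M, B_n] \subseteq \reals$ and so is a gamble, is uniformly bounded below by $M$, satisfies $\inf f \leq f_n$ whenever $M = \inf f$, and obeys $f_n \leq B_n \leq \sup f$. The genuinely non-trivial check is pointwise convergence. Fixing $\omega \in \samplespace$, I combine $k(n) \to +\infty$ with $g_k \to f$ to get $g_{k(n)}(\omega) \to f(\omega)$; when $f(\omega) \in \reals$ the inner $\min$ eventually equals $g_{k(n)}(\omega)$ (since $B_n \to \sup f \geq f(\omega)$ if $\sup f$ is real, and $B_n = n \to +\infty$ otherwise) while the outer $\max$ is inert because $f(\omega) \geq M$; when $f(\omega) = +\infty$ necessarily $\sup f = +\infty$, so $B_n = n \to +\infty$ and both arguments of the inner $\min$ diverge to $+\infty$. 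The main obstacle to anticipate is the reindexing step itself, which has to reconcile two pressures simultaneously, keeping $m_{k(n)} \leq n$ while still driving $k(n) \to \infty$; the `largest admissible index' rule above is what threads the needle.
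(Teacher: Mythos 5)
Your proof is correct and follows essentially the same route as the paper's: upgrade the witnessing finitary sequence to an $n$-measurable one via a monotone reindexing that waits until the measurability level fits below $n$ (your closed-form $k(n)$ is just a non-recursive version of the paper's $\gamma(n)$), then clamp between a real lower bound and $\min\{n,\sup f\}$ to obtain gambles, checking that pointwise convergence survives the truncation. The only cosmetic difference is that you absorb the $\inf f=+\infty$ case into the general construction, whereas the paper treats it separately with the sequence $\{n\}_{n\in\natz}$.
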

\begin{proof}
It is clear that any $f\in\setofextvariables{}$ that is the point-wise limit of a sequence $\{f_n\}_{n\in\natz}$ of $n$-measurable gambles that is uniformly bounded below such that $f_n \leq \sup f$ for all $n \in \natz{}$, is an element of $\setoflimitsoffinmeasb{}$.
So suppose that $f \in \setoflimitsoffinmeasb{}$, meaning that $f = \lim_{n \to +\infty} g_n$ is the point-wise limit of a sequence $\{g_n\}_{n\in\natz}$ of, possibly extended real, finitary variables.
We first show that $f$ is then also the limit of a sequence $\{h_n\}_{n\in\natz}$ of $n$-measurable variables.

Let $h_0 \coloneqq c$ for some $c \in \reals{}$ and $\gamma(1) \coloneqq 0$.
Let $\{h_n\}_{n\in\natz}$ be defined by the following recursive expressions:
\begin{flalign*}
h_n &\coloneqq
\begin{cases}
g_{\gamma(n)} &\text{ if $g_{\gamma(n)}$ is $n$-measurable} \\
h_{n-1} &\text{ otherwise}
\end{cases} \\
\text{ and } \qquad \qquad \qquad \qquad \qquad & & \\ 
\gamma(n+1) &\coloneqq
\begin{cases}
\gamma(n)+1 &\text{ if $g_{\gamma(n)}$ is $n$-measurable}\\
\gamma(n) &\text{ otherwise.}
\end{cases}
\end{flalign*}
for all $n\in\nats$.
The original sequence $\{g_n\}_{n\in\natz}$ is a subsequence of $\{h_n\}_{n\in\natz}$. 
The additional elements in $\{h_n\}_{n\in\natz}$ clearly do not change the limit behaviour and therefore both limits are equal.
We show by induction that $\{h_n\}_{n\in\natz}$ is a sequence of $n$-measurable variables, and hence $f$ is a limit of $n$-measurable variables.
$h_0=c$ is clearly $0$-measurable.
To prove the induction step, suppose that $h_{n-1}$ is $(n-1)$-measurable for some $n\in\nats$.
Then either we have that $g_{\gamma(n)}$ is $n$-measurable, which directly implies that $h_n=g_{\gamma(n)}$ is $n$-measurable.
Otherwise, $h_n$ is equal to $h_{n-1}$ implying that $h_n$ is $(n-1)$-measurable and therefore automatically $n$-measurable.
This concludes the induction step and hence, $f$ is a limit of $n$-measurable variables $\{h_n\}_{n \in \natz{}}$.

To show that $f$ is also a limit of a sequence $\{f_n\}_{n\in\natz}$ of $n$-measurable gambles that is uniformly bounded below such that $f_n \leq \sup f$ for all $n \in \natz{}$, we first assume that $\inf f$ is real.
Let $\{f_n\}_{n\in\natz}$ be the sequence defined by bounding each $h_n$ above by $\min\{n, \sup f\}$ and below by $\inf f$; so $f_n(\omega) \coloneqq \max\{ \min\{h_n(\omega), n, \sup f \}, \inf f \}$ for all $\omega \in \samplespace{}$ and all $n\in\natz$.
Then it is clear that $\{f_n\}_{n\in\natz}$ is a sequence of $n$-measurable gambles because $\{h_n\}_{n \in \natz{}}$ is a sequence of $n$-measurable (possibly extended real) variables.
It also converges point-wise to $f$ because 
\begin{align*}
\lim_{n \to +\infty} f_n(\omega) 
&= \lim_{n \to +\infty} \max\Big\{ \min\{h_n(\omega), n, \sup f \} , \inf f \Big\} \\
&=  \max\Big\{ \min\big\{ \lim_{n \to +\infty} h_n(\omega), +\infty, \sup f \big\}, \inf f \Big\} \\
&=  \max\Big\{ \min\{ f(\omega), \sup f \}, \inf f \Big\} = f(\omega),
\end{align*}
for any $\omega \in \samplespace{}$.
Since moreover $\inf f \leq f_n \leq \sup f$ for all $n \in \natz{}$ and $\{f_n\}_{n \in \natz{}}$ is uniformly bounded below [because $\inf f$ is real], we have immediately established both claims in the lemma if $\inf f$ is real.
Finally, if $\inf f = +\infty$, implying that $f = \sup f = +\infty$, it suffices to consider the increasing sequence of gambles $\{n\}_{n \in \natz{}}$ to see that the first claim in the lemma holds.
\end{proof}

\section{Supermartingales}
Any map $\process$ on $\situations$ is called a \emph{process}. 
A real process $\process \colon \situations \to \reals{}$ is a real-valued map on $\situations$ and an extended real process $\process \colon \situations \to \extreals$ is a extended real-valued map on $\situations$. 
An extended real process is called positive (non-negative) if it is positive (non-negative) in every situation.
An extended real process $\process$ is called bounded below if there is some $M\in\reals{}$ such that $\process(s) \geq M$ for all $s\in\situations$.
For any extended real process $\process$ we can consider the extended real variable $\process(X_{1:n}) \coloneqq\process \circ X_{1:n}$ that only depends of the first $n$ states, and is therefore finitary.
% With any extended real process $\process$ we associate a sequence of $n$-measurable variables $\{\process_n\}_{n\in\natz}$: for all $n\in\natz$, we let $\process_n(\omega) \coloneqq\process(\omega^n)$ for all $\omega\in\samplespace$ or, equivalently, $\process_n \coloneqq\process \circ X_{1:n}=\process(X_{1:n})$.
Moreover, with any situation $s\in\situations$, we can associate the local variable $\process(s\andstate)\in\setofgenextvariables(\statespace)$ defined by
\begin{align*}
\process(s\andstate)(x) &\coloneqq\process(sx) \text{ for all } x\in\statespace.
\end{align*} 
% An \emph{$s$-process}, for some situation $s\in\situations$, is a process with domain $\{t\in\situations : s \sqsubseteq t\}$, instead of $\situations$.
% A \emph{gamble process} $\mathscr{D}$ is a process that associates with any situation $x_{1:n}\in\situations$ a local gamble $\mathscr{D}(x_{1:n})\in\setofgambles(\statespace)$. 
% With any real process $\process$, we can associate a gamble process $\Delta\process$, called its \emph{process difference}. 
% For any situation $x_{1:n}$ the corresponding gamble $\Delta\process(x_{1:n})\in\setofgambles(\statespace)$ is defined by
% \begin{equation*}
% \Delta\process(x_{1:n})(x_{n+1})\coloneqq\process(x_{1:n+1}) -\process(x_{1:n}) \text{ for all } x_{n+1}\in\statespace.
% \end{equation*}

\noindent
We will also use the extended real variables $\liminf\process\in\setofextvariables$ and $\limsup\process\in\setofextvariables$, defined by:
\begin{align*}
\liminf\process(\omega)\coloneqq\liminf_{n\to+\infty}\process(\omega^n)
\text{~~and~~}
\limsup\process(\omega)\coloneqq\limsup_{n\to+\infty}\process(\omega^n) 
\end{align*}
for all $\omega\in\Omega$.
If $\liminf\process=\limsup\process$, we denote their common value by $\lim\process$.

In a so-called \emph{imprecise probability tree} we attach to each situation $s\in\situations$ a \emph{local} uncertainty model: an upper expectation $\lupprev{s}$ on $\setofgenextvariablesb(\statespace)$.
Since $\statespace$ is finite, Proposition~\ref{Prop: local continuity wrt non-decreasing seq} implies that $\lupprev{s}$ also satisfies~\ref{ext coherence continuity}.
For a \emph{given} imprecise probability tree, a \emph{supermartingale} $\martingale$ is an extended real process that is \emph{bounded below}\footnote{Traditionally, the condition of being bounded below is not included as part of the definition of a supermartingale. However, introducing them without this additional requirement here would  necessitate an additional extension of the domain of the local models and would therefore be rather cumbersome. Since we only consider supermartingales that are bounded below, we include this condition directly in their definition.} and such that $\lupprev{s}(\martingale(s\andstate))\leq\martingale(s)$ for all $s\in\situations$. 
In other words, a supermartingale is an extended real process that is bounded below and such that, according to the local models, is expected to decrease.
The condition that $\lupprev{s}(\martingale(s\andstate))\leq\martingale(s)$ for all $s\in\situations$ is well-defined because $\martingale$, and therefore also the local variable $\martingale(s\andstate)$, is bounded below.
We denote the set of all supermartingales \emph{for a given imprecise probability tree} by $\setofextsupmartb{}$. 

\begin{lemma}\label{LemmaBoundedSupermartingale}
Consider any supermartingale $\martingale\in\setofextsupmartb$ and, for any $B\in\reals$, the real process $\martingale_B$ defined by
\begin{equation*}
\martingale_B(s) 
\coloneqq\min\{\martingale(s), B\} \text{ for all } s\in\situations.
\end{equation*} 
Then $\martingale_B$ is a real supermartingale.
\end{lemma}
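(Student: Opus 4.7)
The plan is to verify two things: that $\martingale_B$ is real-valued, and that it satisfies the supermartingale inequality at every situation.

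First I would note that $\martingale_B$ is a real process: since $\martingale$ is bounded below by some $M \in \reals$ (by definition of $\setofextsupmartb$), we have $M \leq \martingale_B(s) \leq B$ for every $s \in \situations$, so $\martingale_B$ takes values in $[M,B] \subset \reals$. In particular, for every situation $s$, the local variable $\martingale_B(s\andstate)$ is a gamble on $\statespace$, so $\lupprev{s}(\martingale_B(s\andstate))$ is well-defined and real.

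Next I would fix an arbitrary $s \in \situations$ and establish the supermartingale inequality $\lupprev{s}(\martingale_B(s\andstate)) \leq \martingale_B(s)$. The key observation is that the local variable $\martingale_B(s\andstate)$ is pointwise bounded by two different upper bounds simultaneously: on the one hand $\martingale_B(s\andstate) \leq \martingale(s\andstate)$, and on the other hand $\martingale_B(s\andstate) \leq B$. Applying monotonicity \ref{coherence: monotonicity} of $\lupprev{s}$ to the first bound, together with the supermartingale property of $\martingale$, gives
\begin{equation*}
\lupprev{s}(\martingale_B(s\andstate)) \leq \lupprev{s}(\martingale(s\andstate)) \leq \martingale(s).
\end{equation*}
Applying monotonicity to the second bound, together with \ref{coherence: const is const}, gives
\begin{equation*}
\lupprev{s}(\martingale_B(s\andstate)) \leq \lupprev{s}(B) = B.
\end{equation*}
Taking the minimum of these two upper bounds yields $\lupprev{s}(\martingale_B(s\andstate)) \leq \min\{\martingale(s), B\} = \martingale_B(s)$, as desired.

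There is no real obstacle here; the proof is essentially a two-line application of monotonicity and the constant axiom, with the only minor subtlety being to note that $\martingale_B$ lies in $[M,B]$ so the relevant quantities are real and the supermartingale inequality is well-posed. Since $\martingale_B$ is trivially bounded below (by $M$, or even just by $\inf \martingale$), all the conditions for membership in $\setofextsupmartb$ (in fact, in the set of real supermartingales) are verified.
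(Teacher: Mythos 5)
Your proof is correct and is essentially the paper's argument: the paper splits into the two cases $\martingale(s)\leq B$ and $\martingale(s)>B$ and in each case uses exactly one of your two bounds ($\lupprev{s}(\martingale_B(s\andstate))\leq\lupprev{s}(\martingale(s\andstate))\leq\martingale(s)$ via \ref{coherence: monotonicity}, and $\lupprev{s}(\martingale_B(s\andstate))\leq\lupprev{s}(B)=B$ via \ref{coherence: monotonicity} and \ref{coherence: const is const}), whereas you establish both bounds at once and take the minimum. The two presentations are interchangeable.
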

\begin{proof}
It is clear that, since $\martingale$ is a bounded below \emph{extended real} process, $\martingale_B$ is a bounded below \emph{real} process. 
Moreover, $\martingale_B(s)\leq\martingale(s)$ for all $s\in\situations$, so it follows that $\martingale_B(s\andstate)\leq\martingale(s\andstate)$ for all $s\in\situations$.
Fix any $s\in\situations$.
If $\martingale(s) \leq B$, then
\begin{align*}
\lupprev{s}(\martingale_B(s\andstate))\leq\lupprev{s}(\martingale(s\andstate))\leq\martingale(s)=\martingale_B(s),
\end{align*}
where the first equality follows from the monotonicity \ref{coherence: monotonicity} of $\lupprev{s}$.
If $\martingale(s) > B$, it follows from $\martingale_B(s\andstate) \leq B$ and the monotonicity \ref{coherence: monotonicity} of $\lupprev{s}$ that
\begin{align*}
\lupprev{s}(\martingale_B(s\andstate))\leq\lupprev{s}( B) \overset{\text{\ref{coherence: const is const}}}{=} B=\martingale_B(s).
\end{align*}
So, we conclude that $\lupprev{s}(\martingale_B(s\andstate))\leq\martingale_B(s)$ for all situations $s\in\situations$.
Hence, $\martingale_B$ is a real supermartingale.
\end{proof}

\begin{lemma}\label{Lemma: the minimum-limit inferior switch}
Consider any extended real process $\process$ that is bounded below and any path $\omega\in\samplespace$. Then 
\begin{equation*}
\min \left\{B,\liminf_{n\to+\infty}\process(\omega^n)\right\}=\liminf_{n\to+\infty} \min\{B,\process(\omega^n)\} \text{ for all $B\in\reals$.}
\end{equation*} 
\end{lemma}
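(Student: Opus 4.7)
Fix $\omega \in \samplespace$ and $B \in \reals$. Write $a_n \coloneqq \process(\omega^n)$ for brevity and let $L \coloneqq \liminf_{n \to +\infty} a_n$. Since $\process$ is bounded below by some $M \in \reals$, we have $a_n \geq M$ for all $n \in \nats$, hence $L \in [M, +\infty] \subseteq (-\infty, +\infty]$. The goal is to show that $\min\{B, L\} = \liminf_{n \to +\infty} \min\{B, a_n\}$. The plan is to split the proof into two cases depending on whether $L \leq B$ or $L > B$, since in each case one side of the identity simplifies considerably.

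\textbf{Case 1: $L \leq B$.} Then $\min\{B, L\} = L$, and I would show that $\liminf_n \min\{B, a_n\} = L$ by proving both inequalities. The upper bound $\liminf_n \min\{B, a_n\} \leq L$ follows at once from $\min\{B, a_n\} \leq a_n$ and monotonicity of $\liminf$. For the lower bound, fix an arbitrary real $\epsilon > 0$; by definition of $L$, there is an $N \in \nats$ with $a_n \geq L - \epsilon$ for all $n \geq N$. Since $L - \epsilon < L \leq B$, taking the minimum with $B$ preserves this bound, so $\min\{B, a_n\} \geq L - \epsilon$ for all $n \geq N$. Hence $\liminf_n \min\{B, a_n\} \geq L - \epsilon$, and letting $\epsilon \downarrow 0$ gives $\liminf_n \min\{B, a_n\} \geq L$.

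\textbf{Case 2: $L > B$.} Then $\min\{B, L\} = B$, and the upper bound $\liminf_n \min\{B, a_n\} \leq B$ is immediate from $\min\{B, a_n\} \leq B$. For the lower bound, I would use the characterisation $L = \lim_n \inf_{k \geq n} a_k$: since $\inf_{k \geq n} a_k$ increases to $L > B$, there is an $N$ with $\inf_{k \geq N} a_k > B$, and consequently $a_k > B$, hence $\min\{B, a_k\} = B$, for every $k \geq N$. This forces $\liminf_n \min\{B, a_n\} \geq B$, as required. Note that this case also handles $L = +\infty$ without modification, since then $L > B$ automatically and the same argument applies.

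No step is genuinely hard; the only thing to watch is that $L$ may equal $+\infty$, which is why we need the lower-boundedness of $\process$ to rule out $L = -\infty$ and ensure $\min\{B, L\}$ is well-defined. Beyond that, the argument is routine case analysis with the $\epsilon$-characterisation of $\liminf$.
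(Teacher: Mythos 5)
Your proof is correct. Both cases are handled soundly: in Case 1 the quantity $L$ is necessarily real (it is sandwiched between the lower bound $M$ of $\process$ and $B$), so the $\epsilon$-argument is legitimate, and in Case 2 the observation that the tail infima $\inf_{k\geq n} a_k$ eventually exceed $B$ correctly covers $L=+\infty$ as well. The route is genuinely different from the paper's, though. The paper proves only the nontrivial inequality $\leq$, and does so by contradiction: it rewrites the right-hand side as $\sup_m \inf_{n\geq m}\min\{B,\process(\omega^n)\}$, uses the identity $\inf_{n\geq m}\min\{B,\process(\omega^n)\}=\min\{B,\inf_{n\geq m}\process(\omega^n)\}$ to pull the minimum outside the tail infimum, and then derives a contradiction with the definition of the supremum. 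You instead split on whether $\liminf_n\process(\omega^n)\leq B$ or $>B$, which lets each side of the identity collapse to something simple and yields a fully direct argument with no contradiction. Your version is arguably more transparent and avoids the min--inf commutation lemma; the paper's is slightly more compact because it only argues one inequality in detail and dispatches the other as ``easy to check''. Either proof would serve.
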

\begin{proof}
Consider any $B\in\reals$.
It is easy to check that 
\begin{equation*}
\min \left\{B,\liminf_{n\to+\infty}\process(\omega^n)\right\} \geq \liminf_{n\to+\infty} \min\{B,\process(\omega^n)\}.
\end{equation*} 
We prove the converse inequality by contradiction. 
Suppose that 
\begin{align}
\min \left\{B,\liminf_{n\to+\infty}\process(\omega^n) \right\} &> 
\liminf_{n\to+\infty} \min\{B,\process(\omega^n)\} \nonumber,
\end{align}
or, equivalently, that
\begin{equation*}
\min \left\{B,\liminf_{n\to+\infty}\process(\omega^n) \right\} > \sup_{m}\inf_{n \geq m} \min\{B,\process(\omega^n)\}.
\end{equation*}
Then there is some $\epsilon > 0$ such that 
\begin{equation*}
\min \set*{B,\liminf_{n\to+\infty}\process(\omega^n)}-\epsilon >\inf_{n \geq m} \min\{B,\process(\omega^n)\}=\min \set*{B,\inf_{n \geq m}\process(\omega^n)}
\end{equation*}
for all $ m\in\natz$.
Since $\min \{B,\liminf_{n\to+\infty}\process(\omega^n)\}-\epsilon < B$, this implies that
\begin{align*}
\inf_{n \geq m}\process(\omega^n) < \min \set*{B,\liminf_{n\to+\infty}\process(\omega^n)}-\epsilon\leq\liminf_{n\to+\infty}\process(\omega^n)-\epsilon \text{ for all $m\in\natz$,}
\end{align*}
from which we infer that
\begin{align*}
\inf_{n \geq m}\process(\omega^n) < \sup_{k}\inf_{n \geq k}\process(\omega^n)-\epsilon \text{ for all $m\in\natz$,}
\end{align*} 
contradicting the definition of the supremum operator.
\end{proof}

\begin{lemma}\label{lemma: infima of supermartingales}
	Consider any supermartingale $\martingale\in\setofextsupmartb{}$ and any situation~$s\in\situations$. Then
	\begin{equation*}
		\martingale(s) \geq\inf_{\omega\in\Gamma(s)} \limsup\martingale(\omega) \geq\inf_{\omega\in\Gamma(s)} \liminf\martingale(\omega).
	\end{equation*}
\end{lemma}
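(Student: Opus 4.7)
The plan is to first dispense with the second inequality, which is immediate: for any $\omega\in\samplespace$ we have $\liminf \martingale(\omega) \leq \limsup \martingale(\omega)$ by definition, so $\inf_{\omega\in\Gamma(s)}\limsup\martingale(\omega) \geq \inf_{\omega\in\Gamma(s)}\liminf\martingale(\omega)$. The substance of the lemma is thus the first inequality $\martingale(s) \geq \inf_{\omega\in\Gamma(s)}\limsup\martingale(\omega)$, which I would prove by explicitly exhibiting a path $\omega\in\Gamma(s)$ along which the supermartingale does not increase.

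The construction is greedy. If $\martingale(s)=+\infty$ the claim is trivial, so I assume $\martingale(s)<+\infty$, noting that $\martingale(s)\in[M,+\infty)$ for some real $M$ since $\martingale$ is bounded below. Starting from $s_0\coloneqq s$, I would recursively pick $x_{n+1}\in\statespace$ to be a minimiser of the local variable $\martingale(s_n\andstate)\in\setofgenextvariablesb(\statespace)$, which exists because $\statespace$ is finite. Combining \ref{coherence: bounds} applied to $\lupprev{s_n}$ with the defining supermartingale inequality gives
\begin{align*}
\martingale(s_n x_{n+1}) \;=\; \inf_{x\in\statespace} \martingale(s_n x) \;\leq\; \lupprev{s_n}\!\left(\martingale(s_n\andstate)\right) \;\leq\; \martingale(s_n),
\end{align*}
so the sequence $\{\martingale(s_n)\}_{n\in\natz}$ is non-increasing. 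Setting $s_{n+1}\coloneqq s_n x_{n+1}$ produces a path $\omega\in\Gamma(s)$ with $\omega^{\lvert s\rvert+n}=s_n$.

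Since $\{\martingale(s_n)\}_{n\in\natz}$ is non-increasing and bounded below by $M$, it converges to some $L\in[M,\martingale(s)]$, and consequently $\limsup\martingale(\omega)=\lim_{n\to+\infty}\martingale(\omega^n)=L \leq \martingale(s)$. Taking the infimum over $\Gamma(s)$ on the left-hand side then yields $\inf_{\omega'\in\Gamma(s)}\limsup\martingale(\omega') \leq \martingale(s)$, which is the desired inequality. I do not expect any genuine obstacle: the only points requiring care are the appeal to \ref{coherence: bounds} to ensure that $\inf_x\martingale(s_n x)$ bounds $\lupprev{s_n}(\martingale(s_n\andstate))$ from below (which works precisely because $\martingale$ is bounded below, and hence so is each $\martingale(s_n\andstate)$), and the observation that finiteness of $\statespace$ lets us actually realise the infimum by a concrete choice of state.
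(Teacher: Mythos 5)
Your proof is correct and follows essentially the same route as the paper's: use \ref{coherence: bounds} together with the supermartingale inequality to find, at each step, a successor state where the process does not increase, and iterate to build a path $\omega\in\Gamma(s)$ with $\limsup\martingale(\omega)\leq\martingale(s)$. The extra details you supply (explicitly taking a minimiser, splitting off the case $\martingale(s)=+\infty$, and noting convergence of the resulting monotone sequence) are harmless refinements of the same argument.
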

\begin{proof}
The proof is similar to that of \cite[Lemma 1]{DECOOMAN201618}.
Since $\martingale$ is a supermartingale, we have that $\lupprev{s}(\martingale(s\andstate))\leq\martingale(s)$, which implies by coherence [\ref{coherence: bounds}] of $\lupprev{s}$ that $\inf_{x\in\statespace} \martingale(sx)\leq\martingale(s)$.
Hence, since $\statespace$ is finite, there is at least one $x\in\statespace$ such that $\martingale(sx)\leq\martingale(s)$.
Repeating this argument over and over again, leads us to the conclusion that there is some $\omega\in\Gamma(s)$ such that $\limsup_{n\to+\infty} \martingale(\omega^n)\leq\martingale(s)$ and therefore also $\inf_{\omega\in\Gamma(s)} \limsup\martingale(\omega)\leq\martingale(s)$.
The rest of the proof is now trivial.
\end{proof}

% \begin{lemma}\label{lemma:positive:countable:linear:combination}
% Consider any two supermartingales $\martingale$ and $\martingale'$ and any two real numbers $\epsilon, \delta \geq 0$.
% Then the process $\epsilon \martingale + \delta \martingale'$ is also a supermartingale.
% \end{lemma}
% \begin{proof}
% It is easy to see that the processes $\epsilon \martingale$ and $\delta \martingale'$ are both supermartingales.
% Note that they are obviously bounded below and that both $\lupprev{s}(\epsilon \martingale(s\andstate))=\epsilon \martingale(s)$ and $\lupprev{s}(\delta \martingale'(s\andstate))=\delta \martingale'(s)$ hold because the local models satisfy \ref{coherence: homog for real lambda}.
% So, to prove that $\epsilon \martingale + \delta \martingale'$ is a supermartingale, it suffices to apply \ref{coherence: sublinearity}. 
% \end{proof}

\begin{lemma}\label{lemma:positive:countable:linear:combination}
Consider any countable collection $\{\martingale_n\}_{n\in\nats}$ of supermartingales that have some common lower bound, and any countable collection of non-negative real numbers $\{\lambda_n\}_{n\in\nats}$ such that $\sum_{n\in\nats}\lambda_n$ is a real number $\lambda$.
Then $\martingale\coloneqq\sum_{n\in\nats}\lambda_n\martingale_n$ is again a supermartingale.
If, moreover, all $\martingale_n$ are non-negative, then so is $\martingale$.
\end{lemma}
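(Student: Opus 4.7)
The plan is to check the two defining requirements of a supermartingale (bounded below, and the one-step inequality $\lupprev{s}(\martingale(s\andstate)) \leq \martingale(s)$) for $\martingale$, handling the countable sum through a shifting trick that allows us to invoke the countable subadditivity of the local upper expectations.

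For boundedness below, let $L\in\reals$ be a common lower bound of the $\martingale_n$. Then for every $s\in\situations$ we have $\martingale(s)=\sum_{n\in\nats}\lambda_n\martingale_n(s)\geq\sum_{n\in\nats}\lambda_nL=\lambda L$, which is a real number, so $\martingale$ is bounded below by $\lambda L$. (If all $\martingale_n$ are also non-negative, taking $L=0$ yields $\martingale\geq 0$, establishing the last sentence of the lemma.)

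For the one-step inequality, fix any $s\in\situations$ and define the local variables $g_n\coloneqq\lambda_n(\martingale_n(s\andstate)-L)$ on $\statespace$. Each $g_n$ is non-negative and hence belongs to $\setofgenextvariablesb(\statespace)$, and a pointwise check (using the arithmetic conventions from Section~\ref{Sect: Upper Expectations} to deal with possible $+\infty$ values) shows that $\sum_{n\in\nats}g_n=\martingale(s\andstate)-\lambda L$. Since $\statespace$ is finite, Corollary~\ref{corollary: countable subadditivity for upper exp} applies to $\lupprev{s}$; combining it with homogeneity~\ref{coherence: homog for real lambda}, constant-addition~\ref{coherence: const add}, and the supermartingale property of each $\martingale_n$ yields
\begin{align*}
\lupprev{s}\bigl(\martingale(s\andstate)-\lambda L\bigr)
&\leq \sum_{n\in\nats}\lupprev{s}(g_n)
= \sum_{n\in\nats}\lambda_n\bigl(\lupprev{s}(\martingale_n(s\andstate))-L\bigr) \\
&\leq \sum_{n\in\nats}\lambda_n\bigl(\martingale_n(s)-L\bigr)
= \martingale(s)-\lambda L.
\end{align*}
A final application of \ref{coherence: const add} replaces the left-hand side by $\lupprev{s}(\martingale(s\andstate))-\lambda L$, and cancelling $\lambda L$ on both sides gives $\lupprev{s}(\martingale(s\andstate))\leq\martingale(s)$, as required.

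The main obstacle is really just the move of the countable sum inside $\lupprev{s}$: the subadditivity axiom \ref{coherence: sublinearity} is only finite, so one must lift to the countable version via Corollary~\ref{corollary: countable subadditivity for upper exp}, which in turn demands non-negativity of the summands. The shift by $L$ is exactly what makes this legitimate; everything else is a routine assembly of the coherence properties already established.
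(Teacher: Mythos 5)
Your proof is correct and follows essentially the same route as the paper's: both shift by the common lower bound to obtain non-negative summands, invoke Corollary~\ref{corollary: countable subadditivity for upper exp} together with \ref{coherence: homog for real lambda} and \ref{coherence: const add}, and then undo the shift. The only cosmetic difference is that you carry out the shift locally at each situation rather than first establishing that the shifted global process $\sum_{n\in\nats}\lambda_n[\martingale_n-B]$ is a supermartingale and then adding back $\lambda B$.
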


\begin{proof}
We only prove the first statement, as the second is then trivially true.
Since all $\martingale_n$ have a common lower bound, say $B \in \reals{}$, the processes $\martingale{}_n - B$ will be non-negative and they will moreover be supermartingales because of \ref{coherence: const add}.
Then because all $\lambda_n$ and all $\martingale{}_n - B$ are non-negative, the sum $\sum_{n\in\nats}\lambda_n[\martingale_n(s)-B]$ exists and is also non-negative for all $s \in \situations{}$.
To see that the non-negative process $\sum_{n\in\nats}\lambda_n[\martingale_n-B]$ is a supermartingale, fix any $t \in \situations{}$ and note that
\begin{align*}
\lupprev{t}\group[\bigg]{\sum_{n\in\nats} \lambda_n [\martingale_n(t\andstate) - B] }
&\leq \sum_{n\in\nats} \lupprev{t}\Bigl( \lambda_n [\martingale_n(t\andstate) - B] \Bigr) \\
&\overset{\text{\ref{coherence: homog for real lambda}}}{=} \sum_{n\in\nats} \lambda_n \lupprev{t}\bigl([\martingale_n(t\andstate) - B]\bigr) \\
&\leq \sum_{n\in\nats} \lambda_n (\martingale_n(t) - B), 
\end{align*}
where the first inequality follows from Corollary~\ref{corollary: countable subadditivity for upper exp} [which we can apply because all $\lambda_n [\martingale_n(t\andstate) - B]$ are non-negative] and the sum on the right hand side of this inequality exists since all $\lupprev{t}( \lambda_n [\martingale_n(t\andstate) - B])$ are non-negative because of \ref{coherence: bounds} and the fact that all $\lambda_n [\martingale_n(t\andstate) - B]$ are non-negative.
The last inequality follows from the fact that all $\martingale{}_n - B$ are supermartingales.

Since $\sum_{n\in\nats}\lambda_n[\martingale_n-B]$ is a supermartingale, the process $\sum_{n\in\nats}\lambda_n[\martingale_n-B] + \lambda B$ is also a supermartingale because of \ref{coherence: const add} [which we can apply because $\lambda B \in \reals{}$].
Moreover, for any $t \in \situations{}$, we have that
\begin{align*}
\sum_{n\in\nats}\lambda_n[\martingale_n(t)-B] + \lambda B 
&= \sum_{n\in\nats}\lambda_n[\martingale_n(t)-B] + \sum_{n\in\nats}\lambda_n B \\
&= \sum_{n\in\nats}\Bigl(\lambda_n[\martingale_n(t)-B] + \lambda_n B \Bigr)
= \sum_{n\in\nats}\lambda_n \martingale_n(t), 
\end{align*}
where the second equality follows from the fact that $\sum_{n\in\nats}\lambda_n B$ is real and the third from the fact that $B$ and all $\lambda_n$ are real.
Hence, the process $\sum_{n\in\nats}\lambda_n \martingale_n$ is equal to $\sum_{n\in\nats}\lambda_n[\martingale_n-B] + \lambda B$, which is a supermartingale, therefore proving the stated.
\end{proof}

\section{Game-theoretic Upper Expectations}
Given an imprecise probability tree consisting of local upper expectations $\lupprev{s}$ for all $s\in\situations$, we use its compatible set of supermartingales $\setofextsupmartb{}$ to construct a global uncertainty model $\upprevvovkk$ as follows.

\begin{definition}\label{def:upperexpectation2}
The map $\upprevvovkk(\cdot \vert \cdot) \colon \setofextvariables \times \situations \to \extreals$ is defined by
\begin{align}\label{upprev3}
\upprevvovkk (f \vert s)\coloneqq\inf \big\{ \martingale(s) \colon \martingale\in\setofextsupmartb \text{ and } \liminf\martingale \geq_s f \big\},
\end{align}
for all extended real variables $f\in\setofextvariables$ and all $s\in\situations$. 
\end{definition}

\noindent
Moreover, the conjugate map $\lowprevvovkk(\cdot \vert \cdot) \colon \setofextvariables \times \situations \to \extreals$ is defined by $\lowprevvovkk(f \vert s) \coloneqq -\upprevvovkk(-f \vert s)$ for all $f\in\setofextvariables$ and all $s\in\situations$.
We will show later (see Corollary~\ref{corollary: Vovk is an upper expectation}) that, for any $s\in\situations$, the map $\upprevvovkk(\cdot \, \vert s) \colon \setofextvariables \to \extreals$ satisfies~\ref{coherence: const is const}--\ref{coherence: monotonicity} on $\setofextvariablesb$.
We will therefore call $\upprevvovkk$ the \emph{global upper expectation} corresponding to the considered probability tree. 

In the remainder of this paper we will study this global upper expectation, proving several properties ranging from basic compatibility with the local models to more involved continuity properties.
Before we do so, we want to stress that Definition \ref{def:upperexpectation2} is mainly due to the work of Shafer and Vovk. 
% Moreover, due to~\cite[Prop. 8.8]{Shafer:2005wx}, for every situation $s$, the restriction of $\upprevvovk(\cdot\vert s)$ to $\setofgambles(\samplespace)$ satisfies the coherence axioms \ref{coherence: const is const}--\ref{coherence 3}. 
However, they have been using many different versions of global upper expectations throughout their work.
The link with our setting can therefore be rather unclear for readers that are not familiar with the theory.
Hence, it seems appropriate to give a brief overview of how our work here relates to theirs.

Most of the definitions they consider only differ in how the supermartingales are allowed to behave.
In \cite{Shafer:2005wx}, they mainly consider supermartingales to be real-valued processes instead of extended real-valued ones. 
In \cite[Chapter 6]{Augustin:2014di} they define global upper expectations on gambles using a version where supermartingales are not necesarily bounded below.
However, this definition leads to undesirable behaviour when applying it to extended real variables, as shown in \cite[Example 1]{DECOOMAN201618}.
In \cite{shafer2011levy}, a version similar to Definition \ref{def:upperexpectation2} with extended real-valued supermartingales that are bounded below is used.
Because of the parallel between both definitions, we here chose to axiomatise the local models in a way that mimics theirs.
However, as we have shown in Section \ref{Sect: Upper Expectations}, we can use an alternative characterisation based on coherence when considering local models on a finite state space $\statespace$.
As we intend to show elsewhere \cite{Tjoens2019NaturalExtensionISIPTA}, this characterisation allows for an intuitive and practically sensible way to motivate the framework.

From a technical point of view, we would like to point out that our axioms \ref{coherence: const is const}--\ref{coherence: monotonicity} differ from Shafer and Vovk's axioms for an outer probability content, in the sense that \ref{coherence: homog for ext lambda} is stronger than their version of this axiom since it also allows $\lambda$ to be $+\infty$.
As we have seen, this implies that, if the state space is finite, the local models are continuous with respect to non-decreasing sequences.
This is a property that will be essential in order to prove some of our results below.
Shafer, Vovk and Takemura initially impose this continuity property as an extra axiom, but afterwards state that this continuity property is redundant to prove their results in \cite{shafer2011levy}.
However, their axioms of an outer probability content [1.--4.] are too weak to prove all of our results presented here.

Another difference is that our local models are only defined on $\setofgenextvariablesb(\statespace)$ and not on $\setofgenextvariables(\statespace)$ as in their case.
This does not make a difference regarding the definition of supermartingales --- and therefore the definition of $\upprevvovkk$ --- since these supermartingales are required to be bounded below.
However, the advantage of our approach is that it implies that the global upper expectation is compatible with the local models on their entire domain.
If the local models would be defined on the entire set $\setofgenextvariables(\statespace)$ of all extended real variables on $\statespace$, this is not necessarily the case, unless one imposes additional properties on the local models.
We choose not to do so.
Another difference is that Shafer and Vovk do not require the state space $\statespace$ to be finite.
In that sense, their definition is more general.

Finally, we want to mention that some of our results, especially the ones in Section \ref{Sect: Doob and Levy}, were already proven by Shafer et. al., although typically for a slightly different setting.
Sometimes, their results apply to a different domain, sometimes they prove these results within a different setting or using a different argument.
We then often borrow their ideas and adapt them to our setting.
When we do, we will mention this explicitly.
Moreover, during the writing of this paper, it has come to our attention that some of the results that we present here are similar to the results in the soon to be released new book of Shafer and Vovk \cite{Vovk2019finance}.
The extent to which these results coincide with ours, remains to be seen.
An important exception are the results in Section \ref{Sect: Continuity wrt n-measurables}, which, to the best of our knowledge, have never been considered by Shafer and Vovk, nor will be in their new book. 

\section{Basic Properties of Game-Theoretic Upper Expectations}
% Let us first prove the following proposition which says that, for any $s\in\situations$, $\upprevvovkk(\cdot \vert s)$ satisfies an extended version of the coherence axioms.
We use the convention that $+\infty -\infty=-\infty+\infty=+\infty$.
This is a typical choice when working with upper expectations, see \cite{shafer2011levy} and \cite{DECOOMAN201618} where they use the dual convention for lower expectations.
If we would assume that $+\infty -\infty=-\infty+\infty=-\infty$, then the subadditivity property \ref{vovk coherence 2} below would for example not hold in general.
We will henceforth use this convention without mentioning it explicitly.
So, for example $a \geq b$ implies that $a-b \geq 0$, but not necessarily $0 \geq b-a$ for any two $a$ and $b$ in $\extreals$.
Moreover, we also assume that $c -\infty=-\infty$, $-\infty-\infty=-\infty$, $\lambda \cdot (-\infty)= -\infty$, $(- \lambda) \cdot (+\infty)= -\infty$ and $0 \cdot (-\infty)=0$ for all real $c$ and all $\lambda\in\posreals{}\cup \{+\infty\}$.

%   sets the upper bound of $-\infty$ for a whole range of variables.
% This implies that the upper expectation of these variables is automatically determined and restricted to $-\infty$.
% On the other hand, when using our convention, the upper bound for the upper expectation of the same variables is equal to $+\infty$ and therefore implies no constraints on these variables.
% Hence, our convention results in a more general framework.
% Moreover, note that we would want the converse convention if we would work with lower expectations.

\begin{proposition}
For all extended real variables $f,g\in\setofextvariables$, all $\lambda\in\nnegreals{}$, all $\mu\in\reals{}$ and all situations $s\in\situations$, $\upprevvovkk$ satisfies
\begin{enumerate}[leftmargin=*,ref={\upshape V}\arabic*,label={\upshape V\arabic*.}, series=sepcoherence ]
\item \label{vovk coherence 1} 
$\upprevvovkk(f \vert s)\leq\sup_{\omega \in \Gamma(s)} f(\omega)$; 
\item \label{vovk coherence 2} 
$\upprevvovkk(f+g \vert s)\leq\upprevvovkk(f \vert s) + \upprevvovkk(g \vert s)$;
\item \label{vovk coherence 3}
$\upprevvovkk(\lambda f \vert s)=\lambda \upprevvovkk(f \vert s)$.
\item \label{vovk coherence 4}
$f \leq_s g \Rightarrow \upprevvovkk(f \vert s)\leq\upprevvovkk(g \vert s)$;
\item \label{vovk coherence 5}
$\inf_{\omega \in \Gamma(s)} f(\omega)\leq\lowprevvovkk(f \vert s)\leq\upprevvovkk(f \vert s)\leq\sup_{\omega \in \Gamma(s)} f(\omega)$;
\item \label{vovk coherence 6}
$\upprevvovkk(f + \mu \vert s)=\upprevvovkk(f \vert s) + \mu$.
\end{enumerate}
\end{proposition}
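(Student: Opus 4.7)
My plan is to derive all six items from basic closure properties of the supermartingale set $\setofextsupmartb$ --- stability under constant shift, non-negative real scaling, and (via Lemma~\ref{lemma:positive:countable:linear:combination}) finite summation --- combined with the definition of $\upprevvovkk$ as an infimum. Items \ref{vovk coherence 1} and \ref{vovk coherence 4} are essentially free. For \ref{vovk coherence 4}, I would note that if $f \leq_s g$, then every supermartingale $\martingale$ with $\liminf \martingale \geq_s g$ also satisfies $\liminf \martingale \geq_s f$, so $\upprevvovkk(f \vert s) \leq \upprevvovkk(g \vert s)$ because the defining infimum is taken over a superset. For \ref{vovk coherence 1}, I would plug in the constant process $\martingale \equiv c$, which is a bounded-below supermartingale by \ref{coherence: const is const}, for every real $c \geq \sup_{\omega \in \Gamma(s)} f(\omega)$ (and, when that supremum equals $-\infty$, for arbitrarily negative real $c$), and then infer the claim by passing to the infimum over such $c$.

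Items \ref{vovk coherence 3} and \ref{vovk coherence 6} are obtained analogously. For \ref{vovk coherence 6}, the key observation is that $\martingale + \mu$ is a bounded-below supermartingale whenever $\martingale$ is (by \ref{coherence: const add} at each situation), with $\liminf(\martingale + \mu) = \liminf \martingale + \mu$; applying this shift to both directions gives the equality. For \ref{vovk coherence 3} with $\lambda > 0$, the process $\lambda \martingale$ is a bounded-below supermartingale by \ref{coherence: homog for real lambda}, with $\liminf(\lambda \martingale) = \lambda \liminf \martingale \geq_s \lambda f$; the reverse inequality arises from the same argument with $1/\lambda$ and $\lambda f$. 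For $\lambda = 0$, the claim reduces to $\upprevvovkk(0 \vert s) = 0$, which I would establish using the constant supermartingale $\martingale \equiv 0$ (upper bound) and Lemma~\ref{lemma: infima of supermartingales} (lower bound, since any admissible $\martingale$ satisfies $\martingale(s) \geq \inf_{\omega \in \Gamma(s)} \liminf \martingale(\omega) \geq 0$).

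The substantive item is \ref{vovk coherence 2}. Given $\alpha_i > \upprevvovkk(f_i \vert s)$ with $\alpha_i \in \reals$ for $i = 1,2$, I would pick admissible supermartingales $\martingale_i$ with $\martingale_i(s) \leq \alpha_i$, note that they share a common real lower bound, and invoke Lemma~\ref{lemma:positive:countable:linear:combination} (with $\lambda_1 = \lambda_2 = 1$) to conclude that $\martingale \coloneqq \martingale_1 + \martingale_2$ is a bounded-below supermartingale. Since both $\liminf \martingale_i$ lie in $(-\infty, +\infty]$, the superadditivity $\liminf \martingale \geq_s \liminf \martingale_1 + \liminf \martingale_2 \geq_s f_1 + f_2$ is unambiguous, so $\martingale$ is admissible for $f_1 + f_2$ and $\upprevvovkk(f_1 + f_2 \vert s) \leq \alpha_1 + \alpha_2$; passing to the infimum over the $\alpha_i$ delivers the claim. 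The edge cases where some $\upprevvovkk(f_i \vert s) = \pm\infty$ are handled separately using the adopted conventions: if some value equals $+\infty$, the right-hand side is $+\infty$; if $\upprevvovkk(f_1 \vert s) = -\infty$ and $\upprevvovkk(f_2 \vert s) < +\infty$, letting $\alpha_1 \to -\infty$ in the bound forces $\upprevvovkk(f_1 + f_2 \vert s) = -\infty$ as well.

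Finally, for \ref{vovk coherence 5}, the two outer inequalities follow directly from \ref{vovk coherence 1} applied to $f$ and, by conjugacy, to $-f$. The middle inequality $\lowprevvovkk(f \vert s) \leq \upprevvovkk(f \vert s)$ rearranges to $\upprevvovkk(f \vert s) + \upprevvovkk(-f \vert s) \geq 0$, which I would derive from the chain
\[
\upprevvovkk(f \vert s) + \upprevvovkk(-f \vert s) \geq \upprevvovkk(f + (-f) \vert s) \geq \upprevvovkk(0 \vert s) = 0,
\]
invoking the already-established \ref{vovk coherence 2}, together with \ref{vovk coherence 4} (since $f + (-f) \geq 0$ under the convention $+\infty - \infty = +\infty$) and the $\lambda = 0$ instance of \ref{vovk coherence 3}; the degenerate cases where $\upprevvovkk(\pm f \vert s) \in \{-\infty, +\infty\}$ are again dispatched with the arithmetic conventions. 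The main obstacle throughout is the careful bookkeeping of extended-real arithmetic --- checking that the superadditivity of $\liminf$ and the sum of supermartingales behave as intended --- particularly in items \ref{vovk coherence 2} and \ref{vovk coherence 5}.
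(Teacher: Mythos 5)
Your proposal is correct and follows essentially the same route as the paper's own proof: constant supermartingales for \ref{vovk coherence 1}, summing two admissible supermartingales via Lemma~\ref{lemma:positive:countable:linear:combination} together with superadditivity of $\liminf$ (valid because both processes are bounded below) for \ref{vovk coherence 2}, scaling and shifting of supermartingales for \ref{vovk coherence 3} and \ref{vovk coherence 6} with Lemma~\ref{lemma: infima of supermartingales} handling the $\lambda=0$ lower bound, and the observation that $f+(-f)\geq 0$ under the convention $+\infty-\infty=+\infty$ for the middle inequality of \ref{vovk coherence 5}. The only differences are presentational (you argue \ref{vovk coherence 5} directly rather than \emph{ex absurdo}), so no further comment is needed.
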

\begin{proof}
Our proof is very similar to that of \cite[Prop. 14]{DECOOMAN201618}: We adapt it here to the fact that our supermartingales take values in $\reals{} \cup \{+\infty\}$ rather than $\reals{}$.

\ref{vovk coherence 1}.
If $\sup_{\omega\in\Gamma(s)} f(\omega)=+\infty$, the inequality is trivially satisfied. 
If this is not the case, consider any real $M \geq \sup_{\omega\in\Gamma(s)} f(\omega)$ and the real process $\martingale$ that assumes the constant value $M$.
Then clearly $\martingale$ is a supermartingale and moreover $\liminf\martingale(\omega)=M \geq f(\omega)$ for all $\omega\in\Gamma(s)$.
Hence, Definition \ref{def:upperexpectation2} guarantees that $\upprevvovkk(f \vert s)\leq\martingale(s)=M$.
Since this is true for every $M \geq \sup_{\omega\in\Gamma(s)} f(\omega)$, \ref{vovk coherence 1} follows.

\ref{vovk coherence 2}.
If either $\upprevvovkk(f \vert s)$ or $\upprevvovkk(g \vert s)$ equals $+\infty$, then the inequality is trivially true.
So suppose that $\upprevvovkk(f \vert s) < +\infty$ and $\upprevvovkk(g \vert s) < +\infty$ and consider any real $c_1 > \upprevvovkk(f \vert s)$ and any real $c_2 > \upprevvovkk(g \vert s)$.
Then there are two supermartingales $\martingale_1$ and $\martingale_2$ such that $\martingale_1(s) \leq c_1$ and $\martingale_2(s) \leq c_2$ and moreover $\liminf\martingale_1 \geq_s f$ and $\liminf\martingale_2 \geq_s g$.
Now consider the extended real process $\martingale \coloneqq\martingale_1 + \martingale_2$.
Then $\martingale$ is a supermartingale because of Lemma~\ref{lemma:positive:countable:linear:combination} [which we can apply because $\martingale_1$ and $\martingale_2$ are both bounded below and hence have a common lower bound].
Moreover, we show that $\liminf (\martingale_1 + \martingale_2) \geq \liminf\martingale_1 + \liminf\martingale_2$ and therefore that $\liminf\martingale \geq_s f+g$, which, by Definition \ref{def:upperexpectation2}, implies that $\upprevvovkk(f+g \vert s)\leq\martingale(s) \leq c_1 + c_2$. 
Since this holds for any real $c_1 > \upprevvovkk(f \vert s)$ and any real $c_2 > \upprevvovkk(g \vert s)$, it follows that $\upprevvovkk(f+g \vert s)\leq\upprevvovkk(f \vert s) + \upprevvovkk(g \vert s)$.

So consider any $\omega\in\samplespace$ and any real $\alpha_1$ and $\alpha_2$ such that $\liminf\martingale_1(\omega) > \alpha_1$ and $\liminf\martingale_2(\omega) > \alpha_2$.
This is always possible because $\martingale_1$ and $\martingale_2$ are bounded below.
Then there are two natural numbers $N_1$ and $N_2$ such that $\martingale_1(\omega^{n_1}) \geq \alpha_1$ and $\martingale_2(\omega^{n_2}) \geq \alpha_2$ for all $n_1 \geq N_1$ and all $n_2 \geq N_2$.
Hence, we have that $\martingale_1(\omega^{n}) + \martingale_2(\omega^{n})  \geq \alpha_1 + \alpha_2$ for all $n \geq \max\{N_1,N_2\}$, implying that $\liminf (\martingale_1 + \martingale_2)(\omega) \geq \alpha_1 + \alpha_2$.
Since this holds for any real $\alpha_1$ and $\alpha_2$ such that $\liminf\martingale_1(\omega) > \alpha_1$ and $\liminf\martingale_2(\omega) > \alpha_2$, we indeed find that $\liminf (\martingale_1 + \martingale_2)(\omega) \geq \liminf\martingale_1(\omega) + \liminf\martingale_2(\omega)$.

\ref{vovk coherence 3}.
For $\lambda > 0$, it suffices to note that $\martingale$ is a supermartingale such that $\liminf\martingale \geq_s f$ if and only if $\lambda \martingale$ is a supermartingale such that $\liminf \lambda \martingale \geq_s \lambda f$.
If $\lambda=0$, then $\lambda \upprevvovkk(f \vert s)=0$ because $(+\infty) \cdot 0=(-\infty) \cdot 0=0$. 
To see that $\upprevvovkk(\lambda f \vert s)=0$, start by noting that $\lambda f=0$ and hence, because of \ref{vovk coherence 1}, $\upprevvovkk(\lambda f \vert s) \leq 0$.
That $\upprevvovkk(\lambda f \vert s) < 0$ is impossible, follows from Lemma~\ref{lemma: infima of supermartingales} and Definition \ref{def:upperexpectation2}.
Hence, we indeed have that $\upprevvovkk(\lambda f \vert s)=0$.

\ref{vovk coherence 4}.
For any $\martingale\in\setofextsupmartb{}$ such that $\liminf\martingale \geq_s g$, we also have that $\liminf\martingale \geq_s f$, and hence, by Definition \ref{def:upperexpectation2}, $\upprevvovkk(f \vert s)\leq\upprevvovkk(g \vert s)$.

\ref{vovk coherence 5}.
The first and third inequality follow trivially from \ref{vovk coherence 1} and the definition of the conjugate lower expectation $\lowprevvovkk$.
To prove the second inequality, assume \emph{ex absurdo} that $\lowprevvovkk(f \vert s) > \upprevvovkk(f \vert s)$. 
Then $0 > \upprevvovkk(f \vert s)-\lowprevvovkk(f \vert s)$ which, by \ref{vovk coherence 2} and the definition of the conjugate lower expectation $\lowprevvovkk$, implies that $0 > \upprevvovkk(f + (- f) \vert s)$. 
Since, according to our convention, the extended real variable $f + (-f)$ only assumes values in $0$ and $+\infty$, we have that $f + (-f) \geq 0$ and therefore, by \ref{vovk coherence 4} and \ref{vovk coherence 3}, that $\upprevvovkk(f + (- f) \vert s) \geq \upprevvovkk(0 \vert s)=0$. 
This is a contradiction.

\ref{vovk coherence 6}.
For any $\martingale\in\setofextsupmartb{}$ such that $\liminf\martingale \geq_s f + \mu$, we have that $\martingale-\mu\in\setofextsupmartb{}$ because of \ref{coherence: const add} and moreover $\liminf (\martingale-\mu) \geq_s f$.
Hence, $\upprevvovkk(f \vert s) + \mu\leq\martingale(s)-\mu + \mu=\martingale(s)$.
Since this holds for any $\martingale\in\setofextsupmartb{}$ such that $\liminf\martingale \geq_s f + \mu$, we have that $\upprevvovkk(f \vert s) + \mu\leq\upprevvovkk(f + \mu \vert s)$.
By applying this inequality to $f'=f+ \mu$ and $\mu'=-\mu$, we also find that $\upprevvovkk(f + \mu \vert s)-\mu\leq\upprevvovkk(f \vert s)$.
\end{proof}

\begin{proposition}
Consider two imprecise probability trees consisting of their local upper expectations $\lupprev{s}$ and $\lupprev{s}'$ and consider their corresponding global upper expectations $\upprevvovkk{}$ and $\overline{\mathrm{E}}^\prime_\mathrm{V}$.
If $\lupprev{s}(f) \leq \lupprev{s}'(f)$ for all $f \in \setofgenextvariablesb{}(\statespace{})$ and all $s \in \situations{}$, then also $\upprevvovkk{}(g \vert s) \leq \overline{\mathrm{E}}^\prime_\mathrm{V}(g \vert s)$ for all $g \in \setofextvariables{}$ and all $s \in \situations{}$.
\end{proposition}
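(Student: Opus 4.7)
The plan is to exploit the definition of $\upprevvovkk{}$ as an infimum over the set of supermartingales and to observe that pointwise domination of the local models translates into an \emph{inclusion} between the two associated supermartingale classes. The core remark is that being a supermartingale with respect to the larger local upper expectations $\lupprev{s}'$ is a \emph{stronger} requirement than being one with respect to the smaller $\lupprev{s}$, so the set of supermartingales induced by $\lupprev{s}'$ is contained in the set induced by $\lupprev{s}$; taking the infimum over a larger set can only produce a smaller value.

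More concretely, I would denote by $\setofextsupmartb{}$ and $\setofextsupmartb{}'$ the sets of supermartingales associated with the two probability trees, respectively. Fix any bounded below extended real process $\martingale \in \setofextsupmartb{}'$. For every $s \in \situations{}$, the local variable $\martingale(s\andstate)$ is bounded below, so by the hypothesis applied to $f = \martingale(s\andstate)$ we obtain
\begin{equation*}
\lupprev{s}(\martingale(s\andstate)) \leq \lupprev{s}'(\martingale(s\andstate)) \leq \martingale(s),
\end{equation*}
where the second inequality uses that $\martingale \in \setofextsupmartb{}'$. Since $\martingale$ is also bounded below, this shows $\martingale \in \setofextsupmartb{}$, i.e.\ $\setofextsupmartb{}' \subseteq \setofextsupmartb{}$.

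Now fix any $g \in \setofextvariables{}$ and any $s \in \situations{}$. Because $\setofextsupmartb{}' \subseteq \setofextsupmartb{}$, we have the inclusion
\begin{equation*}
\bigl\{ \martingale(s) \colon \martingale \in \setofextsupmartb{}' \text{ and } \liminf \martingale \geq_s g \bigr\}
\subseteq
\bigl\{ \martingale(s) \colon \martingale \in \setofextsupmartb{} \text{ and } \liminf \martingale \geq_s g \bigr\},
\end{equation*}
so taking the infimum of both sides and invoking Definition~\ref{def:upperexpectation2} for each tree yields $\upprevvovkk{}(g \vert s) \leq \overline{\mathrm{E}}^\prime_\mathrm{V}(g \vert s)$, which is the desired conclusion.

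I do not anticipate any real obstacle; the only subtlety is to notice that the supermartingale condition involves $\lupprev{\cdot}$ on the \emph{left} of an inequality, so dominating the local models from above shrinks rather than enlarges the class of supermartingales. The extended-real arithmetic causes no trouble since $\martingale(s\andstate)$ is bounded below by hypothesis, so $\lupprev{s}(\martingale(s\andstate))$ and $\lupprev{s}'(\martingale(s\andstate))$ are both well-defined elements of $\extreals{}$.
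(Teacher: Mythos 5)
Your proof is correct and follows exactly the same route as the paper's: establish the inclusion $\setofextsupmartb{}' \subseteq \setofextsupmartb{}$ from the definition of a supermartingale and then conclude by comparing the infima in Definition~\ref{def:upperexpectation2}. You merely spell out in more detail the step the paper declares ``clear,'' which is fine.
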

\begin{proof}
Let $\setofextsupmartb{}$ and $\overline{\mathbb{M}}^\prime_\mathrm{b}$ be the sets of supermartingales associated with respectively $\lupprev{s}$ and $\lupprev{s}'$.
Then it is clear that, from the definition of a supermartingale, we have that $\overline{\mathbb{M}}^\prime_\mathrm{b} \subseteq \setofextsupmartb{}$.
It then follows immediately from Definition \ref{def:upperexpectation2} that $\upprevvovkk{}(g \vert s) \leq \overline{\mathrm{E}}^\prime_\mathrm{V}(g \vert s)$ for all $g \in \setofextvariables{}$ and all $s \in \situations{}$.
\end{proof}

\noindent
With any situation $\sit\in\situations$ and any $(n+1)$-measurable extended real variable $f$ that is bounded below, we now associate a local variable $f(\sit \cdot)$ defined by
$f(\sit \cdot)(\situa{n+1}) \coloneqq f(\situation{1}{n+1}) \text{ for all } \situa{n+1}\in\statespace$, and we then use $\lupprev{\sit}(f)$ to denote the local upper expectation $\lupprev{\sit}(f(\sit \cdot))$.
This allows us to formulate the following result, which shows that the game-theoretic upper expectation $\upprevvovkk$ is compatible with the local models $\lupprev{s}$.

\begin{proposition}\label{prop: global compatible with local}
Consider any situation $\sit\in\situations$ and any $(n+1)$-measurable extended real variable $f$ that is bounded below.
Then,
\begin{equation*}
		\upprevvovkk (f \vert \sit)=\lupprev{\sit}(f(\sit \cdot)).
\end{equation*}
\end{proposition}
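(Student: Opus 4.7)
The plan is to prove the equality by establishing both inequalities separately, exploiting the $(n+1)$-measurability of $f$ to reduce everything to the single local model at $x_{1:n}$.

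For the inequality $\upprevvovkk(f \vert x_{1:n}) \leq \lupprev{x_{1:n}}(f(x_{1:n}\cdot))$, I would construct a specific supermartingale that witnesses the infimum. Define the process $\martingale$ by
\[
\martingale(s) \coloneqq
\begin{cases}
f(s_{1:n+1}) & \text{if } s \text{ extends } x_{1:n+1} \text{ (with } \vert s \vert \geq n+1\text{),}\\
\lupprev{x_{1:n}}(f(x_{1:n}\cdot)) & \text{if } s = x_{1:n},\\
+\infty & \text{otherwise,}
\end{cases}
\]
where $f(s_{1:n+1})$ is well-defined by $(n+1)$-measurability of $f$. I would then verify that $\martingale$ is bounded below (with lower bound $\inf f$, which is real by assumption), and check the supermartingale inequality $\lupprev{s}(\martingale(s\cdot)) \leq \martingale(s)$ in four cases: at $s = x_{1:n}$ it holds by construction, at strict extensions of $x_{1:n+1}$ the local variable $\martingale(s\cdot)$ is constant and equal to $\martingale(s)$ so the inequality reduces to \ref{coherence: const is const}, at $s = x_{1:n+1}y$ the same argument applies, and at situations $s$ where $\martingale(s) = +\infty$ the inequality is trivial. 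Finally, for $\omega \in \Gamma(x_{1:n})$, $\martingale(\omega^m) = f(\omega^{n+1}) = f(\omega)$ for all $m \geq n+1$, so $\liminf\martingale(\omega) = f(\omega)$, giving $\liminf\martingale \geq_{x_{1:n}} f$. Plugging into Definition~\ref{def:upperexpectation2} yields the claimed inequality.

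For the converse inequality $\upprevvovkk(f \vert x_{1:n}) \geq \lupprev{x_{1:n}}(f(x_{1:n}\cdot))$, I would fix an arbitrary $\martingale \in \setofextsupmartb$ with $\liminf\martingale \geq_{x_{1:n}} f$ and show $\martingale(x_{1:n}) \geq \lupprev{x_{1:n}}(f(x_{1:n}\cdot))$. For each $x_{n+1} \in \statespace$, every $\omega \in \Gamma(x_{1:n+1}) \subseteq \Gamma(x_{1:n})$ satisfies $\liminf\martingale(\omega) \geq f(\omega) = f(x_{1:n+1})$, the last equality by $(n+1)$-measurability. Applying Lemma~\ref{lemma: infima of supermartingales} at $x_{1:n+1}$ gives
\[
\martingale(x_{1:n+1}) \geq \inf_{\omega \in \Gamma(x_{1:n+1})} \liminf\martingale(\omega) \geq f(x_{1:n+1}),
\]
so $\martingale(x_{1:n}\cdot) \geq f(x_{1:n}\cdot)$ pointwise on $\statespace$. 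Monotonicity \ref{coherence: monotonicity} of $\lupprev{x_{1:n}}$ combined with the supermartingale condition then yields $\martingale(x_{1:n}) \geq \lupprev{x_{1:n}}(\martingale(x_{1:n}\cdot)) \geq \lupprev{x_{1:n}}(f(x_{1:n}\cdot))$, and taking the infimum over all admissible $\martingale$ completes the proof.

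I expect the main obstacle to be the construction in the first half: one must choose $\martingale$ carefully so that it remains a bounded-below supermartingale globally, not just on $\Gamma(x_{1:n})$. Setting $\martingale$ to $+\infty$ off the relevant sub-tree neatly sidesteps any supermartingale constraint at irrelevant situations (since the inequality becomes trivial) while preserving the global bounded-below property, but requires checking that this does not spoil the liminf computation on $\Gamma(x_{1:n})$, which it does not since paths through $x_{1:n}$ eventually reach the well-defined branch of the construction.
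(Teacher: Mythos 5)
Your proposal is correct and follows essentially the same route as the paper: the lower bound via Lemma~\ref{lemma: infima of supermartingales} combined with monotonicity of $\lupprev{\sit}$, and the upper bound via an explicit witness supermartingale that equals $f$ on the subtree below $\sit$. The only (immaterial) difference is that you set the witness to $+\infty$ off that subtree, whereas the paper assigns it the constant value $\lupprev{\sit}(f(\sit\cdot))$ there; both choices make the supermartingale condition trivial at the irrelevant situations.
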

\begin{proof}
The proof is similar to that of \cite[Corollary~3]{DECOOMAN201618}.
Consider any $\martingale\in\setofextsupmartb{}$ such that $\liminf\martingale \geq_{\sit} f$.
Then it follows from Lemma~\ref{lemma: infima of supermartingales} that, for all $x_{n+1}\in\statespace$,
\begin{align*}
\martingale(x_{1:n+1}) \geq\inf_{\omega\in\Gamma(x_{1:n+1})} \liminf\martingale(\omega) \geq\inf_{\omega\in\Gamma(x_{1:n+1})} f(\omega)=f(x_{1:n+1}).
\end{align*}
Hence, we have that $\martingale(\sit \cdot) \geq f(\sit \cdot)$, which implies by \ref{coherence: monotonicity} and the supermartingale character of $\martingale$ that 
\begin{equation*}
\martingale(\sit) \geq \lupprev{\sit}(\martingale(\sit \cdot)) \geq \lupprev{\sit}(f(\sit \cdot)).
\end{equation*}
Since this holds for any $\martingale\in\setofextsupmartb{}$ such that $\liminf\martingale \geq_{x_{1:n}} f$, it follows from Definition \ref{def:upperexpectation2} that $\upprevvovkk (f \vert \sit) \geq \lupprev{\sit}(f(\sit \cdot))$.
To see that the inequality is an equality, consider the extended real process $\martingale$ defined by $\martingale(s) \coloneqq\lupprev{\sit}(f(\sit \cdot))$ for all $s \not\sqsupset \sit$, and by $\martingale(s)=f(x_{1:n+1})$ for any $s\in\situations$ such that $s \sqsupseteq x_{1:n+1}$ for some $x_{n+1}\in\statespace$.
Then $\martingale$ is a supermartingale because on the one hand, it is bounded below because $f$ is bounded below and $\lupprev{\sit}$ satisfies~\ref{coherence: bounds}, and on the other hand, $\lupprev{\sit}(\martingale(\sit \cdot))=\lupprev{\sit}(f(\sit \cdot))=\martingale(\sit)$ and $\lupprev{s}(\martingale(s\andstate))=\martingale(s)$ for all $s \not= \sit$ because of \ref{coherence: bounds}.
It is moreover easy to see that $\liminf\martingale \geq_{\sit} f$ is guaranteed because $f$ is $(n+1)$-measurable. 
\end{proof}

\noindent
The next theorem shows that the game-theoretic upper expectation $\upprevvovkk$ satisfies a \emph{Law of Iterated Upper Expectations}.
For its proof, we require the following additional notation and terminology.
We write that $s \sqsubseteq t$, and say that $s$ \emph{precedes} $t$ or that $t$ \emph{follows} $s$, when every path that goes through $t$ also goes through $s$. 
When $s \sqsubseteq t$ and $ s \not= t$, we write $s \sqsubset t$. 
When neither $s \sqsubseteq t$ nor $t \sqsubseteq s$, we say that $s$ and $t$ are \emph{incomparable}.

\begin{theorem}\label{theorem: vovk iterated}
For any $f\in\setofextvariables$ and any $x_{1:n}\in\situations$, we have that
\begin{equation*}
\upprevvovkk(f \vert x_{1:n})=\upprevvovkk\Big(\upprevvovkk\left(f \vert x_{1:n} X_{n+1}\right) \Big\vert x_{1:n}\Big).
\end{equation*}
\end{theorem}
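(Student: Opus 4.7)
The plan is to establish both inequalities by manipulating supermartingales directly. Set $g \coloneqq \upprevvovkk(f \vert x_{1:n} X_{n+1})$, the $(n+1)$-measurable global variable whose value on $\omega \in \samplespace$ is $\upprevvovkk(f \vert x_{1:n} \omega_{n+1})$; then by Definition~\ref{def:upperexpectation2} both sides of the claimed identity are infima over $\martingale \in \setofextsupmartb$, the left-hand side over those with $\liminf \martingale \geq_{x_{1:n}} f$ and the right-hand side over those with $\liminf \martingale \geq_{x_{1:n}} g$.

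For the inequality $\upprevvovkk(f \vert x_{1:n}) \geq \upprevvovkk(g \vert x_{1:n})$, I would take any supermartingale $\martingale$ witnessing the left-hand side and form its ``stopped'' version $\martingale'$: on situations of length at most $n+1$ it equals $\martingale$, and at longer situations it is frozen at its value on the length-$(n+1)$ prefix. This $\martingale'$ remains in $\setofextsupmartb$, since the supermartingale condition is inherited from $\martingale$ through level $n+1$ and is trivial beyond (where $\martingale'(t\andstate)$ is constant). For any $\omega \in \Gamma(x_{1:n})$ one has $\liminf \martingale'(\omega) = \martingale(\omega^{n+1})$; and because $\martingale$ itself is a candidate for the local upper expectation at $\omega^{n+1}$, Definition~\ref{def:upperexpectation2} yields $\martingale(\omega^{n+1}) \geq \upprevvovkk(f \vert \omega^{n+1}) = g(\omega)$. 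Thus $\martingale'$ is a candidate for the right-hand side, so $\upprevvovkk(g \vert x_{1:n}) \leq \martingale'(x_{1:n}) = \martingale(x_{1:n})$, and infimising over $\martingale$ settles this direction.

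For the reverse inequality, I would take any supermartingale $\martingale$ with $\liminf \martingale \geq_{x_{1:n}} g$ and fix $\epsilon > 0$. Since $g$ is constant on each cylinder $\Gamma(x_{1:n+1})$, Lemma~\ref{lemma: infima of supermartingales} gives $\martingale(x_{1:n+1}) \geq g(x_{n+1}) = \upprevvovkk(f \vert x_{1:n+1})$ for every $x_{n+1} \in \statespace$. For each such $x_{n+1}$ I choose a supermartingale $\martingale^{x_{n+1}}$ with $\liminf \martingale^{x_{n+1}} \geq_{x_{1:n+1}} f$ and $\martingale^{x_{n+1}}(x_{1:n+1}) \leq \martingale(x_{1:n+1}) + \epsilon$, using the constant process $+\infty$ in the degenerate case $\martingale(x_{1:n+1}) = +\infty$. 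I then splice these together: set $\tilde{\martingale}(t) \coloneqq \martingale(t) + \epsilon$ whenever $|t| \leq n+1$ or $t$ does not extend $x_{1:n}$, and $\tilde{\martingale}(t) \coloneqq \martingale^{x_{n+1}}(t) + c_{x_{n+1}}$ for $t \sqsupseteq x_{1:n+1}$ with $|t| > n+1$, where the non-negative offset $c_{x_{n+1}} \coloneqq \martingale(x_{1:n+1}) + \epsilon - \martingale^{x_{n+1}}(x_{1:n+1})$ is chosen to make $\tilde{\martingale}$ match the upper branch at level $n+1$. A routine check using \ref{coherence: const add} and \ref{coherence: monotonicity} of the local models verifies that $\tilde{\martingale}$ is a bounded-below supermartingale with $\liminf \tilde{\martingale} \geq_{x_{1:n}} f$; since $\tilde{\martingale}(x_{1:n}) = \martingale(x_{1:n}) + \epsilon$, this yields $\upprevvovkk(f \vert x_{1:n}) \leq \martingale(x_{1:n}) + \epsilon$, and infimising over $\martingale$ and letting $\epsilon \to 0$ completes the proof.

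The main obstacle is the splicing construction: one has to verify the supermartingale condition exactly at the transition level $n+1$, and handle the arithmetic in $\extreals$ under the paper's convention $+\infty - \infty = +\infty$, particularly when $\martingale(x_{1:n+1})$ or $\martingale^{x_{n+1}}(x_{1:n+1})$ takes the value $+\infty$. The non-negativity of $c_{x_{n+1}}$ is essential both for preserving the bounded-below property of $\tilde{\martingale}$ and for sustaining the inequality $\liminf \tilde{\martingale} \geq_{x_{1:n}} f$.
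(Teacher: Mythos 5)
Your proposal is correct and follows essentially the same route as the paper's proof: one direction via a supermartingale stopped at level $n+1$ (using that $\martingale(x_{1:n+1})\geq\upprevvovkk(f\vert x_{1:n+1})$ for any witness $\martingale$ of the left-hand side), and the other via splicing $\epsilon$-optimal supermartingales for $f$ onto each branch $x_{1:n+1}$ of a witness for the right-hand side, with the $+\infty$ case handled by the constant $+\infty$ supermartingale. The only cosmetic difference is your additive offset $c_{x_{n+1}}$ forcing equality at level $n+1$, where the paper instead keeps $\martingale_{x_{1:n+1}}$ unshifted and uses monotonicity \ref{coherence: monotonicity} and \ref{coherence: const add} of the local model at $x_{1:n}$.
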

\begin{proof}
This was already proven in \cite[Theorem 16]{DECOOMAN201618} for a version of global upper expectations with real-valued supermartingales.
We here adapt it to our setting.
Fix any $f\in\setofextvariables$ and any $x_{1:n}\in\situations$.
We first show that $\upprevvovkk(\upprevvovkk(f \vert x_{1:n} X_{n+1}) \vert x_{1:n})\leq\upprevvovkk(f \vert x_{1:n})$.
If $\upprevvovkk(f \vert x_{1:n})=+\infty$, this is trivially satisfied.
If not, then for any fixed real $\alpha > \upprevvovkk(f \vert x_{1:n})$ there is a supermartingale $\martingale$ such that $\martingale(x_{1:n})\leq\alpha$ and $\liminf\martingale \geq_{x_{1:n}} f$.
Then it is clear that, for all $x_{n+1}\in\statespace$, $\liminf\martingale \geq_{x_{1:n+1}} f$, and hence $\upprevvovkk(f \vert x_{1:n+1})\leq\martingale(x_{1:n+1})$ by Definition \ref{def:upperexpectation2}.
Let $\martingale'$ be the process that is equal to $\martingale$ for all situations that precede $x_{1:n}$ or are incomparable with $x_{1:n}$, and that is equal to the constant $\martingale(x_{1:n+1})$ for all situations that follow $x_{1:n+1}$ for some $x_{n+1}\in\statespace$.
Clearly, $\martingale'$ is again a supermartingale and moreover $\upprevvovkk(f \vert x_{1:n} X_{n+1})\leq\martingale(x_{1:n} X_{n+1}) \leq_{x_{1:n}}  \liminf\martingale'$.
Hence, it follows from Definition \ref{def:upperexpectation2} that 
% \begin{align*}
$\upprevvovkk(\upprevvovkk(f \vert x_{1:n} X_{n+1}) \vert x_{1:n})\leq\martingale'(x_{1:n})=\martingale(x_{1:n})\leq\alpha.$
% \end{align*}
This holds for any real $\alpha > \upprevvovkk(f \vert x_{1:n})$ and therefore we indeed have that $\upprevvovkk(\upprevvovkk(f \vert x_{1:n} X_{n+1}) \vert x_{1:n})\leq\upprevvovkk(f \vert x_{1:n})$.

We now prove the other inequality.
Again, if $\upprevvovkk(\upprevvovkk(f \vert x_{1:n} X_{n+1}) \vert x_{1:n})=+\infty$ it trivially holds, so we can assume it to be real or equal to $-\infty$.
Fix any real $\alpha > \upprevvovkk(\upprevvovkk(f \vert x_{1:n} X_{n+1}) \vert x_{1:n})$ and any $\epsilon > 0$.
Then there must be a supermartingale $\martingale$ such that $\martingale(x_{1:n})\leq\alpha$ and $\liminf\martingale \geq_{x_{1:n}} \upprevvovkk(f \vert x_{1:n} X_{n+1})$.
Consider any such supermartingale.
Then for any $x_{n+1}\in\statespace$, we have that $\liminf\martingale \geq_{x_{1:n+1}} \upprevvovkk(f \vert x_{1:n+1})$, which by Lemma~\ref{lemma: infima of supermartingales} implies that $\martingale(x_{1:n+1}) \geq \upprevvovkk(f \vert x_{1:n+1})$.
Fix any $x_{n+1}\in\statespace$.
Then $\martingale(x_{1:n+1})$ is either real or equal to $+\infty$ because $\martingale$ is bounded below.
If $\martingale(x_{1:n+1})$ is real, then since $\martingale(x_{1:n+1}) \geq \upprevvovkk(f \vert x_{1:n+1})$, it follows from Definition \ref{def:upperexpectation2} that there is a supermartingale $\martingale_{x_{1:n+1}}$ such that $\martingale_{x_{1:n+1}}(x_{1:n+1})\leq\martingale(x_{1:n+1}) + \epsilon$ and $\liminf\martingale_{x_{1:n+1}} \geq_{x_{1:n+1}} f$.
If $\martingale(x_{1:n+1})$ is $+\infty$, let $\martingale_{x_{1:n+1}}$ be the constant supermartingale that is equal to $+\infty$ everywhere. 
So, for all $x_{n+1}\in\statespace$, we have found a supermartingale $\martingale_{x_{1:n+1}}$ such that $\martingale_{x_{1:n+1}}(x_{1:n+1})\leq\martingale(x_{1:n+1}) + \epsilon$ and $\liminf\martingale_{x_{1:n+1}} \geq_{x_{1:n+1}} f$.
Let $\martingale^\ast$ be the process that is equal to $\martingale + \epsilon$ for all situations that precede or are incomparable with $x_{1:n}$, and that is equal to $\martingale_{x_{1:n+1}}$ for all situations that follow $x_{1:n+1}$ for some $x_{n+1}\in\statespace$.
Then $\martingale^\ast$ is also a supermartingale.
Indeed, it is clearly bounded below because $\martingale{}$ and all $\martingale{}_{x_{1:n+1}}$ are bounded below and $\statespace{}$ is finite.
Furthermore, for any $x_{n+1}\in\statespace$, we have that $\martingale^\ast(x_{1:n+1})=\martingale_{x_{1:n+1}}(x_{1:n+1})\leq\martingale(x_{1:n+1}) + \epsilon$, implying that $\martingale^\ast(x_{1:n} \cdot)\leq\martingale(x_{1:n} \cdot) + \epsilon$ and therefore, by \ref{coherence: monotonicity} and \ref{coherence: const add}, that 
\begin{equation*}
\lupprev{x_{1:n}}(\martingale^\ast(x_{1:n} \cdot))\leq\lupprev{x_{1:n}}(\martingale(x_{1:n} \cdot) + \epsilon) = \lupprev{x_{1:n}}(\martingale(x_{1:n} \cdot))+ \epsilon \leq  \martingale(x_{1:n}) + \epsilon=\martingale^\ast(x_{1:n}).
\end{equation*}
Moreover, for all situations $s \not\sqsupseteq x_{1:n}$, we have by \ref{coherence: const add} that $\lupprev{s}(\martingale^\ast(s\andstate))=\lupprev{s}(\martingale(s\andstate) + \epsilon)=\lupprev{s}(\martingale(s\andstate))+ \epsilon\leq\martingale(s) + \epsilon=\martingale^\ast(s)$, and for all $s\in\situations$ such that  $s \sqsupseteq x_{1:n+1}$ for some $x_{n+1}\in\statespace$, we have that $\lupprev{s}(\martingale^\ast(s\andstate))=\lupprev{s}(\martingale_{x_{1:n+1}}(s\andstate))\leq\martingale_{x_{1:n+1}}(s)=\martingale^\ast(s)$.
All together, we have that $\lupprev{s}(\martingale^\ast(s\andstate))\leq\martingale^\ast(s)$ for all $s \in \situations{}$, implying, together with its bounded belowness, that $\martingale^\ast$ is indeed a supermartingale.
Also note that $\liminf\martingale^\ast \geq_{x_{1:n}} f$ and that $\martingale^\ast(x_{1:n})=\martingale(x_{1:n}) + \epsilon\leq\alpha + \epsilon$.
Hence, by Definition~\ref{def:upperexpectation2}, $\upprevvovkk(f \vert x_{1:n})\leq\alpha + \epsilon$.
Since this holds for any $\epsilon > 0$ and any real $\alpha > \upprevvovkk(\upprevvovkk(f \vert x_{1:n} X_{n+1}) \vert x_{1:n})$, we indeed have that $\upprevvovkk(f \vert x_{1:n})\leq\upprevvovkk(\upprevvovkk(f \vert x_{1:n} X_{n+1}) \vert x_{1:n})$.
\end{proof}

\noindent

\begin{corollary}\label{corollary: upprevvovk is supermartingale}
For any $f\in\setofextvariablesb$, the process $\process$, defined by $\process(s) \coloneqq\upprevvovkk(f \vert s)$ for all $s\in\situations$, is a supermartingale.
\end{corollary}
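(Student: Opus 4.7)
The plan is to verify the two defining conditions of a supermartingale for $\process$: (i) $\process$ is bounded below, and (ii) $\lupprev{s}(\process(s\andstate)) \leq \process(s)$ for every $s \in \situations$.

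For (i), since $f \in \setofextvariablesb$, there is some $M \in \reals{}$ with $f \geq M$. By \ref{vovk coherence 5}, $\process(s) = \upprevvovkk(f \vert s) \geq \inf_{\omega \in \Gamma(s)} f(\omega) \geq M$ for every $s \in \situations$, so $\process$ is bounded below.

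For (ii), the key observation is that the Law of Iterated Upper Expectations (Theorem~\ref{theorem: vovk iterated}) almost immediately gives equality rather than just the required inequality. Fix any $s = x_{1:n} \in \situations$ and define the extended real global variable $g$ by $g(\omega) \coloneqq \upprevvovkk(f \vert \omega^{n+1})$ for all $\omega \in \samplespace{}$. Then $g$ is $(n+1)$-measurable by construction, and by the same bounded-belowness argument as above, $g$ is bounded below by $M$. Moreover, the local variable $g(s\cdot) \in \setofgenextvariablesb(\statespace)$ defined as in Proposition~\ref{prop: global compatible with local} satisfies, for each $x \in \statespace$,
\begin{equation*}
g(s\cdot)(x) = g(sx) = \upprevvovkk(f \vert sx) = \process(sx) = \process(s\andstate)(x),
\end{equation*}
so $g(s\cdot) = \process(s\andstate)$ as local variables on $\statespace$.

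Now apply Proposition~\ref{prop: global compatible with local} to the $(n+1)$-measurable bounded-below variable $g$ at the situation $s$, obtaining $\upprevvovkk(g \vert s) = \lupprev{s}(g(s\cdot)) = \lupprev{s}(\process(s\andstate))$. Combining this with Theorem~\ref{theorem: vovk iterated}, which says $\process(s) = \upprevvovkk(f \vert s) = \upprevvovkk(\upprevvovkk(f \vert s X_{n+1}) \vert s) = \upprevvovkk(g \vert s)$, yields the equality $\process(s) = \lupprev{s}(\process(s\andstate))$, from which $\lupprev{s}(\process(s\andstate)) \leq \process(s)$ follows trivially. Since $s$ was arbitrary, this completes the proof.

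The main subtlety I expect is the small bookkeeping step of checking that the variable $g$ built from the conditional upper expectations is genuinely $(n+1)$-measurable and bounded below so that Proposition~\ref{prop: global compatible with local} legitimately applies; once this is in place, the result is really just a restatement of the iterated expectation theorem combined with local compatibility.
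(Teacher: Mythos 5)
Your proposal is correct and follows essentially the same route as the paper: bounded-belowness from \ref{vovk coherence 5}, and the supermartingale inequality (in fact equality) from combining Proposition~\ref{prop: global compatible with local} applied to the $(n+1)$-measurable, bounded-below variable $\upprevvovkk(f \vert s X_{n+1})$ with Theorem~\ref{theorem: vovk iterated}. The only difference is that you spell out the measurability and bounded-belowness bookkeeping that the paper leaves implicit.
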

\begin{proof}
Consider any $f\in\setofextvariablesb$. 
Then $\process$ is bounded below because $f$ is bounded below and $\upprevvovkk$ satisfies~\ref{vovk coherence 5}.
Moreover, using the notation $\upprevvovkk(f \vert s\andstate)$ to denote the local variable that assumes the value $\upprevvovkk(f \vert sx)$ for any $x\in\statespace$, it follows from Proposition~\ref{prop: global compatible with local} and Theorem \ref{theorem: vovk iterated} that
\vspace{0.2cm}
\begin{align*}
\lupprev{x_{1:n}}(\process(x_{1:n} \cdot)) 
= \lupprev{x_{1:n}}(\upprevvovkk(f \vert x_{1:n} \cdot)) 
&= \upprevvovkk(\upprevvovkk(f \vert x_{1:n} X_{n+1}) \vert x_{1:n}) \\
&= \upprevvovkk(f \vert x_{1:n})=\process(x_{1:n}) \text{ for all } x_{1:n}\in\situations.
\end{align*}
Hence, $\process$ is indeed a supermartingale.
\end{proof}

\noindent
Introducing further terminology, for any $s\in\situations$, we say that a supermartingale $\martingale\in\setofextsupmartb{}$ is an \emph{$s$-test supermartingale} if it is non-negative and $\martingale(s)=1$.
If $s=\Box$, we simply say it is a test supermartingale.
For any $s\in\situations$, we say that an event $A \subseteq \Gamma(s)$ is \emph{strictly almost sure (s.a.s.)} within $\Gamma(s)$ if there is an $s$-test supermartingale that converges to $+\infty$ on $\Gamma(s) \setminus A$.
Again, if $s=\Box$, we drop the `within' and simply speak of `strictly almost sure'.
For any two $f,g\in\setofextvariables$, we will then use the notation $f \geq_s g$ s.a.s. --- and similarly for $\leq_s$, $>_s$ and $<_s$ --- to mean that the event $\{\omega\in\Gamma(s) \colon f(\omega) \geq g(\omega)\}$ is strictly almost sure within $\Gamma(s)$.
% The following lemma was mentioned without proof by Shafer et. al. in \cite{shafer2011levy}.
% We explicitly prove it here. 

\begin{lemma}\label{lemmaMonotoneConvergenceSAS}
Consider any $f\in\setofextvariables$ and any $s\in\situations$.
Then
\begin{align}\label{Eq: lemmaMonotoneConvergenceSAS 1}
	\upprevvovkk(f \vert s) &=\inf \Big\{ \martingale(s) :  \martingale\in\setofextsupmartb \text{ and } \liminf\martingale \geq_s f \text{ s.a.s.} \Big\}. 
\end{align}
\end{lemma}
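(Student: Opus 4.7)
The plan is to prove the two inequalities separately. One direction is essentially trivial: every supermartingale $\martingale \in \setofextsupmartb{}$ with $\liminf \martingale \geq_s f$ everywhere on $\Gamma(s)$ automatically satisfies $\liminf \martingale \geq_s f$ s.a.s. (take the constant test supermartingale $T \equiv 1$, which witnesses that the empty exceptional set is strictly almost sure). Hence the set of admissible supermartingales in the right-hand side contains the one in Definition~\ref{def:upperexpectation2}, so the s.a.s.\ infimum is at most $\upprevvovkk(f \vert s)$.

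For the reverse inequality, I would fix any $\martingale \in \setofextsupmartb{}$ such that $\liminf \martingale \geq_s f$ s.a.s., and may assume $\martingale(s) < +\infty$ (otherwise there is nothing to show). By the definition of `strictly almost sure within $\Gamma(s)$', there exists an $s$-test supermartingale $T$ (so $T \geq 0$, $T(s) = 1$) that converges to $+\infty$ on the exceptional set $E \coloneqq \{\omega \in \Gamma(s) : \liminf \martingale(\omega) < f(\omega)\}$. For an arbitrary $\epsilon > 0$, define the process $\martingale^\prime \coloneqq \martingale + \epsilon T$. Since $\martingale$ is bounded below and $\epsilon T \geq 0$, they share a common lower bound, so Lemma~\ref{lemma:positive:countable:linear:combination} guarantees that $\martingale^\prime$ is again a supermartingale in $\setofextsupmartb{}$. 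Moreover $\martingale^\prime(s) = \martingale(s) + \epsilon T(s) = \martingale(s) + \epsilon$.

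The core step is to verify that $\liminf \martingale^\prime \geq_s f$ everywhere on $\Gamma(s)$. For $\omega \in \Gamma(s) \setminus E$, one has $\liminf \martingale(\omega) \geq f(\omega)$, and since $T \geq 0$ and both processes are bounded below, the super-additivity inequality for $\liminf$ used in the proof of \ref{vovk coherence 2} gives $\liminf \martingale^\prime(\omega) \geq \liminf \martingale(\omega) + \epsilon \liminf T(\omega) \geq f(\omega)$. For $\omega \in E$, the convergence $T(\omega^n) \to +\infty$ combined with the bounded belowness of $\martingale$ forces $\martingale(\omega^n) + \epsilon T(\omega^n) \to +\infty$, so $\liminf \martingale^\prime(\omega) = +\infty \geq f(\omega)$. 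Hence $\martingale^\prime$ is admissible in the original Definition~\ref{def:upperexpectation2}, yielding $\upprevvovkk(f \vert s) \leq \martingale^\prime(s) = \martingale(s) + \epsilon$. Letting $\epsilon \downarrow 0$ gives $\upprevvovkk(f \vert s) \leq \martingale(s)$, and taking the infimum over all admissible $\martingale$ in \eqref{Eq: lemmaMonotoneConvergenceSAS 1} completes the proof.

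The only delicate point is the liminf manipulation in the case $\omega \in E$, where we must exploit both $T(\omega^n)\to+\infty$ and the uniform lower bound on $\martingale$ to rule out $-\infty + \infty$ issues; everything else is a straightforward assembly of the existing lemmas.
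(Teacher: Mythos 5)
Your proposal is correct and follows essentially the same route as the paper's proof: the trivial inclusion for one inequality, and for the other the perturbation $\martingale+\epsilon T$ with the witnessing $s$-test supermartingale $T$, justified as a supermartingale via Lemma~\ref{lemma:positive:countable:linear:combination}, followed by the same case split on the exceptional set and letting $\epsilon\downarrow 0$. The only cosmetic difference is that you argue directly with an arbitrary admissible $\martingale$ (handling $\martingale(s)=+\infty$ separately) rather than with an $\alpha$ strictly above the infimum, which is an equivalent bookkeeping choice.
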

\begin{proof}
Since every supermartingale $\martingale$ that satisfies $\liminf\martingale \geq_s f$ also satisfies $\liminf\martingale \geq_s f$ s.a.s., we clearly have that
\begin{equation*}
\upprevvovkk(f \vert s) \geq\inf \Big\{ \martingale(s) \colon  \martingale\in\setofextsupmartb \text{ and } \liminf\martingale \geq_s f \text{ s.a.s.} \Big\},
\end{equation*}
so it remains to prove the other inequality.
If the right hand side of Equation \eqref{Eq: lemmaMonotoneConvergenceSAS 1} is equal to $+\infty$ this inequality is trivially satisfied, so consider the case where it is not.
Fix any $\alpha\in\reals{}$ such that $\alpha >\inf \big\{\martingale(s) \colon \martingale\in\setofextsupmartb \text{ and } \liminf\martingale \geq_s f \text{ s.a.s.} \big\}$ and any $\epsilon > 0$.
Then there is some supermartingale $\martingale_\alpha$ such that $\liminf\martingale_\alpha \geq_s f \text{ s.a.s.}$ and 
\begin{equation}\label{Eq: lemmaMonotoneConvergenceSAS 2}
\martingale_\alpha(s)\leq\alpha.
\end{equation}
Since $\liminf\martingale_\alpha \geq_s f$ s.a.s., there is some $s$-test supermartingale $\martingale_\alpha^\ast$ that converges to $+\infty$ on $A \coloneqq\{ \omega\in\Gamma(s) \colon \liminf\martingale_\alpha(\omega) < f(\omega)\}$.
Consider the extended real process $\martingale_\alpha + \epsilon \martingale_\alpha^\ast$. 
This process is again a supermartingale because of Lemma~\ref{lemma:positive:countable:linear:combination} [which we can apply because $\martingale_\alpha$ and $\martingale_\alpha^\ast$ are both bounded below and hence have a common lower bound]. 
Since $\martingale_\alpha^\ast$ converges to $+\infty$ on $A$ and because $\martingale_\alpha$ is bounded below, we have $\liminf (\martingale_\alpha + \epsilon \martingale_\alpha^\ast)(\omega)=+\infty \geq f(\omega)$ for all $\omega\in A$. 
Moreover, for all $\omega\in\Gamma(s) \setminus A$, we also have that $\liminf (\martingale_\alpha + \epsilon \martingale_\alpha^\ast)(\omega) \geq f(\omega)$, because $\liminf\martingale_\alpha(\omega) \geq f(\omega)$ and because $\epsilon \martingale_\alpha^\ast$ is non-negative.
Hence $\liminf (\martingale_\alpha + \epsilon \martingale_\alpha^\ast) \geq_s f$, and consequently $\upprevvovkk(f \vert s) \leq (\martingale_\alpha + \epsilon \martingale_\alpha^\ast)(s)$.
It therefore follows from Equation \eqref{Eq: lemmaMonotoneConvergenceSAS 2} that
\begin{align*}
\upprevvovkk(f \vert s)
&\leq (\martingale_\alpha + \epsilon \martingale_\alpha^\ast)(s) 
= \martingale_\alpha(s) + \epsilon\leq\alpha + \epsilon.
\end{align*}
As this holds for any $\epsilon\in\posreals{}$, we have that $\upprevvovkk(f \vert s)\leq\alpha$, and since this is true for every $\alpha\in\reals{}$ such that $\alpha >\inf \big\{\martingale(s) \colon \martingale\in\setofextsupmartb \text{ and } \liminf\martingale \geq_s f \text{ s.a.s.} \big\}$, it follows that
\begin{align*}
\upprevvovkk(f \vert s)
\leq\inf \Big\{ \martingale(s) :  \martingale\in\setofextsupmartb \text{ and } \liminf\martingale \geq_s f \text{ s.a.s.} \Big\}.
\end{align*}
\end{proof}

\section{Doob's Convergence Theorem and L\'evy's Zero-one Law}\label{Sect: Doob and Levy}
% In the current section, we will prove two fundamental theorems for our framework: Doob's Convergence Theorem and L\'evy's Zero-one Law.
% Both were already proven by Shafer, Vovk and Takemura in \cite{Shafer:2005wx} and \cite{shafer2011levy} for a similar case.
% Moreover, a version of Doob's Convergence Theorem with supermartingales as real processes is proven in \cite{DECOOMAN201618}.
% We here prove them for our setting; the methods used are similar to the ones used by Shafer, Vovk and Takemura.
% In our proofs, we will also need the concept of a cut.
A cut $U$ is collection of pair-wise incomparable situations. 
For any two cuts $U$ and $V$, we can define the following sets of situations:
\begin{align*}
[U,V] \coloneqq\{s\in\situations : (\exists u\in U)(\exists v\in V)u \sqsubseteq s \sqsubseteq v \}, \\
[U,V) \coloneqq\{s\in\situations : (\exists u\in U)(\exists v\in V)u \sqsubseteq s \sqsubset v \}, \\
(U,V] \coloneqq\{s\in\situations : (\exists u\in U)(\exists v\in V)u \sqsubset s \sqsubseteq v \}, \\
(U,V) \coloneqq\{s\in\situations : (\exists u\in U)(\exists v\in V)u \sqsubset s \sqsubset v \}.
\end{align*}
We call a cut $U$ \emph{complete} if for all $\omega\in\Omega$ there is some $u\in U$ such that $\omega\in\Gamma(u)$. 
Otherwise, we call $U$ \emph{partial}. 
We will also use the simpler notation $s$ to denote the cut $\{s\}$ that consists of the single situation $s\in\situations$. 
In this way we can define $[U,s]$, $[U,s)$, $[s,V]$, ... in a similar way as above.
We also write $U \sqsubset V$ if $(\forall v\in V)(\exists u\in U)u \sqsubset v$.
Analogously as before, we say that a path $\omega\in\samplespace$ goes through a cut $U$ when there is some $n\in\natz$ such that $\omega^n\in U$.

% \begin{theorem}\label{Theorem: Doob}
% Consider any supermartingale $\martingale\in\setofextsupmartb{}$. 
% If $\martingale(s)$ is real for some $s\in\situations$, then $\martingale$ converges to a real number strictly almost surely within $\Gamma(s)$.
% \end{theorem}
\begin{proposition}\label{Prop: Doob}
Consider any supermartingale $\martingale\in\setofextsupmartb{}$.
If $\martingale(t)$ is real for some $t\in\situations$, then there is a $t$-test supermartingale $\martingale^\ast$ that converges to $+\infty$ on all paths $\omega\in\Gamma(t)$ where $\martingale$ does not converge to an extended real number.
Moreover, if $\martingale$ converges to a real number on some path $\omega\in\Gamma(t)$, then $\martingale^\ast$ converges to an extended real number on $\omega$.
\end{proposition}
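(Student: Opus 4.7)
The plan is to adapt the classical Doob upcrossing argument to the game-theoretic setting. Let $B \in \reals$ be a lower bound for $\martingale$ and enumerate the countably many pairs of rationals $(a_k, b_k)_{k \in \nats}$ with $B < a_k < b_k$. For each $k$, fix some real $B_k > b_k$ and set $\martingale^{(k)} \coloneqq \min\{\martingale, B_k\}$, which by Lemma~\ref{LemmaBoundedSupermartingale} is a real-valued supermartingale with exactly the same $[a_k, b_k]$-upcrossings as $\martingale$ (since both membership events $\leq a_k$ and $\geq b_k$ are insensitive to truncation above $B_k > b_k$).

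For each $k$ I would implement the standard upcrossing strategy on $\martingale^{(k)}$. Define recursively a holding indicator $h^{(k)}(s) \in \{0,1\}$ on situations $s \sqsupseteq t$ that switches from $0$ to $1$ whenever $\martingale^{(k)}(s) \leq a_k$ and from $1$ to $0$ whenever $\martingale^{(k)}(s) \geq b_k$; set $K_0 \coloneqq a_k - B > 0$, $K^{(k)}(t) \coloneqq K_0$, $K^{(k)}(sx) \coloneqq K^{(k)}(s) + h^{(k)}(s)(\martingale^{(k)}(sx) - \martingale^{(k)}(s))$ for $s \sqsupseteq t$, and $K^{(k)}(s) \coloneqq K_0$ for $s \not\sqsupseteq t$. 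Since $h^{(k)}(s)$ is a non-negative constant given $s$, a direct application of \ref{coherence: const add} and \ref{coherence: homog for real lambda} to the supermartingale property of $\martingale^{(k)}$ yields $\lupprev{s}(K^{(k)}(s\andstate)) \leq K^{(k)}(s)$; the classical upcrossing accounting then gives $K^{(k)}(\omega^n) \geq U_n^{(k)}(\omega)(b_k - a_k) \geq 0$, where $U_n^{(k)}(\omega)$ counts the completed $[a_k, b_k]$-upcrossings of $\martingale$ along $\omega$ by time $n$. Setting $N^{(k)} \coloneqq K^{(k)}/K_0$ yields a $t$-test supermartingale that tends to $+\infty$ on any path along which $\martingale$ makes infinitely many $[a_k, b_k]$-upcrossings.

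I would then choose positive weights with $\sum_k \lambda_k = 1$ and $\sum_k \lambda_k/(a_k - B) < +\infty$ — for instance $\lambda_k \coloneqq C(a_k - B) 2^{-k}$ with $C$ a normalizing constant — and set $\martingale^\ast \coloneqq \sum_k \lambda_k N^{(k)}$. Since all $N^{(k)}$ are non-negative, Lemma~\ref{lemma:positive:countable:linear:combination} guarantees that $\martingale^\ast$ is a non-negative supermartingale; clearly $\martingale^\ast(t) = 1$, so $\martingale^\ast$ is a $t$-test supermartingale. If $\martingale$ fails to converge to an extended real on some $\omega \in \Gamma(t)$, then $\liminf_n \martingale(\omega^n) < \limsup_n \martingale(\omega^n)$ with both lying in $[B, +\infty]$, and one picks rationals with $\liminf < a_k < b_k < \limsup$; then $\martingale$ performs infinitely many $[a_k, b_k]$-upcrossings along $\omega$, $N^{(k)}(\omega^n) \to +\infty$, and hence $\martingale^\ast(\omega^n) \geq \lambda_k N^{(k)}(\omega^n) \to +\infty$.

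The hard part is showing that $\martingale^\ast$ converges on a path $\omega$ with $\martingale(\omega^n) \to L \in \reals$, because a naive weighted sum of upcrossing test supermartingales need not converge pathwise. For this I would decompose path-wise $K^{(k)}(\omega^n) = K_0 + R_k(\omega^n) + O_k(\omega^n)$, where $R_k(\omega^n)$ collects the profits of all upcrossings completed by time $n$ (so $R_k(\omega^n)$ is monotone non-decreasing in $n$) and $O_k(\omega^n) = \martingale^{(k)}(\omega^n) - \martingale^{(k)}(\omega^\tau)$ is the open-position term when the strategy is currently holding (having last bought at $\omega^\tau$), and $0$ otherwise. Since $\martingale$ is bounded along $\omega$, $|O_k(\omega^n)| \leq R_\omega \coloneqq \sup_n \martingale(\omega^n) - \inf_n \martingale(\omega^n) < +\infty$, and only finitely many position changes occur on $\omega$, so $O_k(\omega^n)$ converges to a real limit as $n \to \infty$. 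Writing $\martingale^\ast(\omega^n) = 1 + \sum_k (\lambda_k/K_0) R_k(\omega^n) + \sum_k (\lambda_k/K_0) O_k(\omega^n)$, the first sum is monotone non-decreasing in $n$ and therefore converges in $[0,+\infty]$ by the monotone convergence theorem, while the second is uniformly dominated by $R_\omega \sum_k \lambda_k/(a_k - B) < +\infty$ and converges in $\reals$ by the dominated convergence theorem; their total is an element of $\extreals$. The key device is the separation of the capital into a monotone ratchet part and a bounded oscillating part, combined with the weighting $\lambda_k \propto (a_k - B)$ that exactly cancels the singular factor $1/(a_k - B)$ coming from the normalization $K_0 = a_k - B$.
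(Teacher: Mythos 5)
Your construction follows the same skeleton as the paper's proof: a buy-below-$a$, sell-above-$b$ strategy for each rational pair, mixed into a single $t$-test supermartingale with positive weights, with divergence on non-convergent paths obtained exactly as in the paper. The genuine differences are (i) you truncate above via Lemma~\ref{LemmaBoundedSupermartingale} to work with real supermartingales, where the paper keeps the extended-real process and relies on the convention $+\infty-\infty=+\infty$; and (ii) for the second claim you split each capital process into a monotone realised-profit part and a bounded open-position part and choose $\lambda_k\propto(a_k-B)$ to cancel the normalisation $1/K_0=1/(a_k-B)$, where the paper instead proves $\inf_{\ell\geq n}[\martingale^{a,b}(\omega^\ell)-\martingale^{a,b}(\omega^n)]\geq\inf_{\ell\geq n}[\martingale(\omega^\ell)-\martingale(\omega^n)]$ and deduces $\liminf_{n}\inf_{\ell\geq n}[\martingale^\ast(\omega^\ell)-\martingale^\ast(\omega^n)]\geq 0$. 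Your decomposition is arguably more transparent about why the mixture converges; the paper's estimate avoids tracking the open position separately. The supermartingale verification for $K^{(k)}$ via \ref{coherence: const add} and \ref{coherence: homog for real lambda}, and the non-negativity from the seed capital $K_0=a_k-B$, are correct.

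There is one genuine gap, in the step where you dominate the open positions. You assert that ``$\martingale$ is bounded along $\omega$'', so that $R_\omega=\sup_n\martingale(\omega^n)-\inf_n\martingale(\omega^n)<+\infty$ bounds all $\lvert O_k(\omega^n)\rvert$ uniformly in $k$. But supermartingales in $\setofextsupmartb$ take values in $\reals\cup\{+\infty\}$, and $\martingale(\omega^n)=+\infty$ for finitely many $n$ is perfectly compatible with $\martingale(\omega^n)\to L\in\reals$; in that case $R_\omega=+\infty$ and your dominating bound is vacuous, while the fallback bound $\lvert O_k\rvert\leq B_k-B$ is $k$-dependent and not obviously summable against $\lambda_k/K_0=C2^{-k}$. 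The paper devotes a separate case to exactly this situation. The gap is repairable: only the tail $n\geq N$ (with $N$ such that $\martingale(\omega^m)\leq L+1$ for $m\geq N$) matters for convergence, and since every purchase value satisfies $\martingale^{(k)}(\omega^\tau)\leq a_k$ one gets $O_k(\omega^n)\in[B-a_k,\,L+1-B]$ for $n\geq N$, whence $(\lambda_k/K_0)\lvert O_k(\omega^n)\rvert\leq\lambda_k+C2^{-k}(L+1-B)$, which is summable --- but as written the domination is not established. A secondary, minor point: your ``for instance'' choice $\lambda_k=C(a_k-B)2^{-k}$ need not be normalisable, since $\sum_k(a_k-B)2^{-k}$ can diverge for an unfavourable enumeration of the rational pairs; taking $\lambda_k\propto\min\{1,a_k-B\}\,2^{-k}$ satisfies both of your stated requirements.
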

\begin{proof}
% Note that the situation $s\in\situations$, where $\martingale(s)$ is real, has no absolute meaning.
% If the theorem holds for some situation, it holds for any situation.
% For the sake of simplicity, we consider the case where $\martingale(\Box)$ is real.

% Since $\martingale$ is bounded below by definition, we can assume without loss of generality that $\martingale$ is non-negative.
% Indeed, it suffices to add a sufficiently large constant to the initially considered supermartingale.

% We have to prove that there is a non-negative supermartingale $\martingale^\ast\in\setofextsupmartb{}$ that converges on all paths $\omega\in\samplespace$ for which either $\lim \martingale(\omega)=+\infty$ or either $\liminf\martingale(\omega) < \limsup\martingale(\omega)$.
% The supermartingale $\martingale$ can not converge to $-\infty$ because it is positive.
We can assume that $\martingale$ is non-negative and that $\martingale(t)=1$ without loss of generality.
Indeed, because the original supermartingale is bounded below and real in $t$, we can obtain such a process by translating and scaling --- by adding a positive constant and then multiplying the supermartingale by a positive real --- the originally considered supermartingale in an appropriate way.
This process will then again be a supermartingale because of Lemma~\ref{lemma:positive:countable:linear:combination}.
Moreover, the new supermartingale will have the same convergence character as the original one.

To start, we associate with any couple of rational numbers $0<a<b$ the following recursively constructed sequences of cuts $\{U_k^{a,b}\}_{k\in\nats}$ and $\{V_k^{a,b}\}_{k\in\nats}$. 
	Let $V_1^{a,b} \coloneqq\{ s \sqsupseteq t \colon   \ \martingale(s) < a \text{ and } (\forall s' \sqsubset s) \ \martingale(s') \geq a \}$
	and, for $k\in\nats$,
	\begin{enumerate}
		\item let
		\begin{align*}
			U_k^{a,b} \coloneqq\{s\in\situations \colon V_{k}^{a,b} \sqsubset s : \martingale(s) > b \text{ and } (\forall s'\in (V_{k}^{a,b} , s)) \ \martingale(s') \leq b \};
		\end{align*}
		\item 
		let
		\begin{align*}
			V_{k+1}^{a,b} \coloneqq\{ s\in\situations \colon  U_{k}^{a,b} \sqsubset s , \ \martingale(s) < a \text{ and } (\forall s'\in (U_{k}^{a,b} , s)) \ \martingale(s') \geq a \};
		\end{align*} 
	\end{enumerate}
	The cuts $U_k^{a,b}$ and $V_k^{a,b}$ can be partial or complete.
	
	Next, consider the extended real process $\martingale^{a,b}$ defined by $\martingale^{a,b}(s) \coloneqq\martingale(t)$ for all $s \not\sqsupset t$ and 
	\begin{align}\label{Eq: proof Doob: def M^a,b}
	 	\martingale^{a,b} (s\andstate) \coloneqq\begin{cases}
	 		\martingale^{a,b}(s) + \left[\martingale(s\andstate) -\martingale(s) \right]   &\text{ if } s\in  [V_{k}^{a,b} , U_{k}^{a,b}) \text{ for some } k\in\nats;\\
	 		\martingale^{a,b}(s) &\text{ otherwise,}
	 		\end{cases}
	 \end{align}
	 for all $s \sqsupset t$.
	 We prove that this process is a non-negative supermartingale that converges to $+\infty$ on all paths $\omega\in\Gamma(t)$ such that
	 \begin{equation}\label{Eq: doob conv to infty}
	 \liminf\martingale(\omega) < a < b < \limsup\martingale(\omega).
	 \end{equation}

	 \noindent
	 For any situation~$s$ and for any $k\in\nats$, when $U_k^{a,b} \sqsubset s$, we denote by $u_k^s$ the (necessarily unique) situation in $U_k^{a,b}$ such that $u_k^s \sqsubset s$. 
	 Similarly, for any $k\in\nats$, when $V_k^{a,b} \sqsubset s$, we denote by $v_k^s$ the (necessarily unique) situation in $V_k^{a,b}$such that $v_k^s \sqsubset s$.
	 Note that $ V_1^{a,b} \sqsubset U_1^{a,b} \sqsubset V_2^{a,b} \sqsubset \cdots \sqsubset V_n^{a,b} \sqsubset U_n^{a,b} \sqsubset \cdots $.
	 Hence, for any situation~$s$ we can distinguish the following three cases:
	 \begin{itemize}
	 \item
	 The first case is that $V_1^{a,b} \not\sqsubset s$. Then we have that
	 \begin{equation}\label{proof Doob: first case}
	 \martingale^{a,b} (s)=\martingale^{a,b} (t)=\martingale(t).
	 \end{equation}
	  \item
	 The second case is that $V_k^{a,b} \sqsubset s$ and $U_{k}^{a,b} \not\sqsubset s$ for some $k\in\nats$. 
	 Then by applying Equation \eqref{Eq: proof Doob: def M^a,b} for each subsequent step and cancelling out the intermediate terms, which is possible because $\martingale$ is real for any situation $s'\in\situations$ such that $V_{k'}^{a,b} \sqsubseteq s'$ and $U_{k'}^{a,b} \not\sqsubseteq s'$ for some $k'\in\nats$ (this follows readily from the definition of the cuts $V_{k'}^{a,b}$ and $U_{k'}^{a,b}$), we have that
	 \begin{equation}\label{proof Doob: second case}
	 \martingale^{a,b} (s)- \martingale^{a,b} (t) 
	= \sum_{\ell=1}^{k-1} 
	 \left[ \martingale(u_\ell^{s})-\martingale(v_{\ell}^{s}) \right] + \martingale(s)-\martingale(v_{k}^{s}).
	 \end{equation} 

	 % Since $\martingale(s) \geq M > 0$ and $0 < b < \martingale(u_\ell^{s})$ and $0 < \martingale(v_\ell^{s}) < a$ for all $\ell\in\{ 1,...,k\}$, we get that
	 % \begin{equation*}
	 % \martingale^{a,b} (s) > \Big( \frac{b}{a} \Big)^{k-1}  \frac{\martingale(s)}{a} \geq \Big( \frac{b}{a} \Big)^{k-1} \Big( \frac{M}{a} \Big). 
	 % \end{equation*} 
	 \item
	 The third case is that $U_k^{a,b} \sqsubset s$ and $V_{k+1}^{a,b} \not\sqsubset s$ for some $k\in\nats$. Then we have that
	 \begin{equation}\label{proof Doob: third case}
	 \martingale^{a,b} (s)- \martingale^{a,b} (t) 
	=\sum_{\ell=1}^{k} [\martingale(u_\ell^{s})-\martingale(v_{\ell}^{s})],
	 \end{equation} 
	 where, again, we used the fact that $\martingale$ is real for any situation $s'\in\situations$ such that $V_{k'}^{a,b} \sqsubseteq s'$ and $U_{k'}^{a,b} \not\sqsubseteq s'$ for some $k'\in\nats$.
	 % Again, as $0 < b < \martingale_{\ell-1}^{a,b}(u_\ell^{s})$ and $0 < \martingale_{\ell}^{a,b}(v_\ell^{s}) < a$ for all $\ell\in\{ 1,...,k\}$, we get that
	 % \begin{equation*}
	 % \martingale^{a,b} (s) > \Big( \frac{b}{a} \Big)^{k}. 
	 % \end{equation*} 
	\end{itemize}

	\noindent
	That $\martingale^{a,b}(s)$ is non-negative, is trivially satisfied in the first case.
	To see that this is also true for the third case, observe that $0 < b < \martingale(u_\ell^{s})$ and $0\leq\martingale(v_\ell^{s}) < a$ for all $\ell\in\{ 1,...,k\}$.
	This implies that $\martingale(u_\ell^{s})-\martingale(v_\ell^{s}) > b-a > 0$ for all $\ell\in\{ 1,...,k\}$ and therefore directly that $\martingale^{a,b}(s)$ is non-negative because of Equation \eqref{proof Doob: third case}.
	In the second case, it follows from Equations \eqref{proof Doob: first case}, \eqref{proof Doob: second case} and \eqref{proof Doob: third case} that
	\begin{align}\label{proof Doob: eq martingale differences}
	\martingale^{a,b} (s) 
	=\martingale^{a,b} (v_{k}^{s}) + \martingale(s)-\martingale(v_{k}^{s}).
	\end{align}
	We prove by induction that $\martingale^{a,b} (v_{\ell}^{s}) \geq \martingale(v_{\ell}^{s})$ for all $\ell\in\{ 1,...,k\}$, and therefore, by Equation \eqref{proof Doob: eq martingale differences} and because $\martingale(s)$ is non-negative, that $\martingale^{a,b} (s)$ is non-negative.

	If $\ell=1$, then either $v_1^s=t$ or $v_1^s \not= t$.
	If $v_1^s=t$, then $\martingale^{a,b}(v_1^s)=\martingale(v_1^s)$ due to Equation \eqref{proof Doob: first case}.
	If $v_1^s \not= t$, we have, by the definition of $V_1^{a,b}$, that $\martingale(v_1^s) < a$ and $a\leq\martingale(t)=\martingale^{a,b}(v_1^s)$.
	Hence, in both cases, we have that $\martingale^{a,b}(v_1^s) \geq \martingale(v_1^s)$.
	Now suppose that $\martingale^{a,b} (v_{\ell}^{s}) \geq \martingale(v_{\ell}^{s})$ for some $\ell\in\{ 1,...,k-1\}$.
	Then, $\martingale^{a,b} (v_{\ell+1}^{s})=\martingale^{a,b} (v_{\ell}^{s}) + [\martingale(u_{\ell}^{s})-\martingale(v_{\ell}^{s})] \geq \martingale(u_{\ell}^{s}) \geq \martingale(v_{\ell+1}^{s})$, which concludes our induction step.
	So indeed $\martingale^{a,b} (v_{\ell}^{s}) \geq \martingale(v_{\ell}^{s})$ for all $\ell\in\{ 1,...,k\}$.

	Next, we show that $\lupprev{s}(\martingale^{a,b}(s\andstate))\leq\martingale^{a,b}(s)$ for all $s\in\situations$, and hence, combined with its non-negativity --- and therefore bounded belowness --- that $\martingale$ is a non-negative supermartingale.
	Consider any $s\in\situations$.
	If $s\in  [V_{k}^{a,b} , U_{k}^{a,b})$ for some $k\in\nats$, we have, by the definitions of $V_{k}^{a,b}$ and $U_{k}^{a,b}$ and the non-negativity of $\martingale$, that $\martingale(s)$ is real and therefore that
	 \begin{align*}
	 \lupprev{s}(\martingale^{a,b}(s\andstate)) 
	=\lupprev{s}(\martingale^{a,b}(s) + \martingale(s\andstate)-\martingale(s)) 
	 &\overset{\text{\ref{coherence: const add}}}{=} \lupprev{s}(\martingale(s\andstate)) + \martingale^{a,b}(s)-\martingale(s)  \\
	 &\leq \martingale^{a,b}(s),
	 \end{align*}
	 where the last step follows because $\martingale$ is a supermartingale.
	 Otherwise, if $s \not\in [V_{k}^{a,b} , U_{k}^{a,b})$ for all $k\in\nats$, we have that $\lupprev{s}(\martingale^{a,b}(s\andstate))=\lupprev{s}(\martingale^{a,b}(s))=\martingale^{a,b}(s)$, where we have used \ref{coherence: bounds} for the last inequality.
	 Hence, we have that $\lupprev{s}(\martingale^{a,b}(s\andstate))\leq\martingale^{a,b}(s)$ for all $s\in\situations$, and we can therefore infer that $\martingale^{a,b}$ is indeed a non-negative supermartingale.

	 Let us now show that $\martingale^{a,b}$ converges to $+\infty$ on all paths $\omega\in\Gamma(t)$ for which Equation \eqref{Eq: doob conv to infty} holds.
	 Consider such a path $\omega$.
	 % Then $\omega$ goes through all the cuts $V_1^{a,b} \sqsubset U_1^{a,b} \sqsubset V_2^{a,b} \sqsubset \cdots \sqsubset V_n^{a,b} \sqsubset U_n^{a,b} \sqsubset \cdots $. 
	 First, it follows from $\liminf\martingale(\omega) < a$ that there exists some $n_1\in\nats$ such that $\omega^{n_1} \sqsupseteq t$ and $\martingale(\omega^{n_1}) < a$. 
	 Take the first such $n_1$. 
	 Then it follows from the definition of $V_1^{a,b}$ that $\omega^{n_1}\in V_1^{a,b}$. 
	 Next, it follows from $\limsup\martingale (\omega) > b$ that there exists some $m_1\in\nats$ for which $m_1 > n_1$ and $\martingale(\omega^{m_1}) > b$. 
	 Take the first such $m_1$, then it follows from the definition of $U_1^{a,b}$ that $\omega^{m_1}\in U_1^{a,b}$. 
	 Repeating similar arguments over and over again allows us to conclude that $\omega$ goes through all the cuts $V_1^{a,b} \sqsubset U_1^{a,b} \sqsubset V_2^{a,b} \sqsubset \cdots \sqsubset V_n^{a,b} \sqsubset U_n^{a,b} \sqsubset \cdots $.
	 To see that $\martingale^{a,b}- \martingale^{a,b} (t)$ --- and therefore, since $\martingale^{a,b}(t)=\martingale(t)=1$, also $\martingale^{a,b}$ --- converges to $+\infty$ on $\omega$, note that the right hand side in Equation \eqref{proof Doob: second case} is bounded below by $(k-1)(b-a)-a$.
	 Similarly, the right hand side in Equation \eqref{proof Doob: third case} is bounded below by $k(b-a)$.
	 Hence, in both cases $\martingale^{a,b}- \martingale^{a,b} (t)$ goes to $+\infty$ because $k$ goes to $+\infty$.

	To finish, we use the countable set of rational couples $K \coloneqq\{ (a,b)\in\mathbb{Q}^2 : 0<a<b \}$ to define the process $\martingale^\ast$:
	\begin{equation*}
		\martingale^\ast \coloneqq\sum_{(a,b)\in K} w^{a,b} \martingale^{a,b},
	\end{equation*}
	with coefficients $w^{a,b}>0$ that sum to $1$. 
	Hence, $\martingale^\ast$ is a countable convex combination of the non-negative supermartingales $\martingale^{a,b}$. 
	By Lemma~\ref{lemma:positive:countable:linear:combination}, $\martingale^\ast$ is then a non-negative supermartingale.
	It is moreover clear that $\martingale^\ast(t)=\martingale(t)=1$, implying, together with its non-negativity, that $\martingale^\ast$ is a $t$-test supermartingale.
	We show that $\martingale^\ast$ converges in the desired way as described by the proposition.

	If $\martingale$ does not converge to an extended real number on some path $\omega\in\Gamma(t)$, then we have that $\liminf\martingale(\omega) < \limsup\martingale(\omega)$.
	Since $\liminf\martingale(\omega) \geq\inf_{s\in\situations} \martingale(s) \geq 0$, there is at least one couple $(a',b')\in K$ such that $\liminf\martingale(\omega) < a' < b' < \limsup\martingale(\omega)$, and as a consequence $\martingale^{a',b'}$ converges to  $+\infty$ on $\omega$. 
	Then also $\lim w^{a',b'} \martingale^{a',b'}(\omega)=+\infty$ since $w^{a',b'} > 0$. 
	For all other couples $(a,b)\in K \setminus \{(a',b')\}$, we have that $w^{a,b} \martingale^{a,b}$ is non-negative, so $\martingale^\ast$ indeed converges to $+\infty$ on $\omega$.

	Finally, we show that $\martingale^\ast$ converges in $\extreals$ on every path $\omega\in\Gamma(t)$ where $\martingale$ converges to a real number.
	Fix any such $\omega\in\Gamma(t)$.
	Suppose there is an $n\in\nats$ such that $\omega^n \sqsupset t$ and $\martingale(\omega^n)=+\infty$.
	Consider the first such $n$.
	Hence, $\martingale(\omega^{n-1})$ is real if $\omega^{n-1} \sqsupset t$.
	It is also real if $\omega^{n-1}=t$ because $\martingale(t)$ is real, implying that $\martingale(\omega^{n-1})$ is always real.
	Consider now a couple $(a,b)\in K$ such that $\martingale(\omega^{n-1}) < a < b$ (since $\martingale(\omega^{n-1})$ is real, there is at least one).
	It then follows from the definitions of $U_k^{a,b}$ and $V_k^{a,b}$ that $V_k^{a,b} \sqsubseteq \omega^{n-1}$ and $U_k^{a,b} \not\sqsubseteq \omega^{n-1}$ for some $k\in\nats$.
	This implies by the definition of $\martingale^{a,b}$ that $\martingale^{a,b}(\omega^n)=\martingale^{a,b}(\omega^{n-1}) + \martingale(\omega^n)-\martingale(\omega^{n-1})$, which in turn implies that $\martingale^{a,b}(\omega^n)=+\infty$ because $\martingale(\omega^n)=+\infty$, $\martingale^{a,b}(\omega^{n-1}) \geq 0$ and $\martingale(\omega^{n-1})$ is real.
	Then it follows from the definition of $\martingale^{a,b}$ and our convention that $+\infty-\infty=+\infty$, that $\martingale^{a,b}(\omega^m)=+\infty$ for all $m \geq n$.
	Hence, because $w^{a,b}$ is positive, it follows from the definition of $\martingale^\ast$ that $\martingale^\ast(\omega^m)= +\infty$ for all $m \geq n$ and as a consequence, $\martingale^\ast$ converges in $\extreals$.

	So, consider the case where $\martingale(\omega^n)$ is real for all $n\in\natz$ such that $\omega^n \sqsupset t$ (the case where $\martingale(\omega^n)=-\infty$ is impossible because $\martingale$ is bounded below).
	Since $\martingale^{a,b}(s)=\martingale(t)=1$ for all $s \not\sqsupset t$, it then follows from Equation \eqref{Eq: proof Doob: def M^a,b} that $\martingale^{a,b}(\omega^n)$ is real for all $n\in\natz$ and all $(a,b)\in K$.
	Moreover, $\martingale^\ast(\omega^n)$ is then also real for all $n\in\natz$ such that $\omega^n \sqsupseteq t$.
	Indeed, for all $n\in\natz$ such that $\omega^n \sqsupseteq t$, $\martingale(\omega^n)$ are real numbers that converge in $\reals{}$ as $n$ approaches infinity, so $M \coloneqq\sup\{\martingale(\omega^n) \colon n\in\natz, \omega^n \sqsupseteq t\}$ is real, and, since $\martingale$ is non-negative, $M \geq 0$.
	If we note that, according to Equation \eqref{Eq: proof Doob: def M^a,b} and the non-negativity of $\martingale$, 
	\begin{align*}
	\martingale^{a,b}(\omega^{n+1})-\martingale^{a,b}(\omega^{n}) 
	\leq \max\{0, \martingale(\omega^{n+1})-\martingale(\omega^n) \} 
	&\leq \max \{0, \martingale(\omega^{n+1}\} \\
	&\leq \max\{0,M\} \leq M,
	\end{align*}
	for any $n\in\natz$ such that $\omega^n \sqsupseteq t$, then we infer that  
	\begin{align*}
	\martingale^\ast(\omega^n)=\sum_{(a,b)\in K} w^{a,b} \martingale^{a,b}(\omega^n)
	&\leq \sum_{(a,b)\in K} w^{a,b} \left(n M + \martingale(t) \right) \\
	&= n  M + 1,
	\end{align*}
	for all $n\in\natz$ such that $\omega^n \sqsupseteq t$.
	The last expression is clearly real because $M$ is real.
	Together with the non-negativity of $\martingale^\ast$, this implies that $\martingale^\ast(\omega^n)$ is indeed real for all $n\in\natz$ such that $\omega^n \sqsupseteq t$.
	So we have that $\martingale(\omega^n)$, $\martingale^{a,b}(\omega^n)$ and $\martingale^\ast(\omega^n)$ are all real for all $n\in\natz$ such that $\omega^n \sqsupseteq t$ and all $(a,b)\in K$.
	Taking this into account, we now fix any such $n\in\natz$ and any $(a,b)\in K$, and show that
	\begin{align}\label{proof doob: eq infima}
	\inf_{\ell \geq n} [\martingale^{a,b}(\omega^\ell)-\martingale^{a,b}(\omega^n)] \geq\inf_{\ell \geq n} [\martingale(\omega^\ell)-\martingale(\omega^n)]. 
	\end{align}
	First note that, for any $k\in\nats$ and any $v_k\in V_k^{a,b}$, we have that
	\begin{equation}\label{eq: proof doob: 3}
	\martingale^{a,b}(s')\leq\martingale^{a,b}(v_k) \text{ for all } s' \sqsubseteq v_k.
	\end{equation}
	Indeed, if $U_{k'}^{a,b} \sqsubset s'$ and $V_{k'+1}^{a,b} \not\sqsubset s'$ for some $k' < k$, then by Equation \eqref{proof Doob: third case} and the fact that $\martingale(u_p^{v_k})-\martingale(v_p^{v_k}) > b-a > 0$ for all $p\in\{ 1,...,k-1\}$, we have that 
	\begin{align*}
	\martingale^{a,b} (s')- \martingale^{a,b} (t) 
	=\sum_{p=1}^{k'} [\martingale(u_p^{s'})-\martingale(v_{p}^{s'})] 
	 &\leq \sum_{p=1}^{k-1} [\martingale(u_p^{v_k})-\martingale(v_{p}^{v_k})] \\
	 &= \martingale^{a,b} (v_k)- \martingale^{a,b} (t),
	\end{align*}
	and therefore that $\martingale^{a,b}(s')\leq\martingale^{a,b}(v_k)$.
	If $V_{k'}^{a,b} \sqsubset s'$ and $U_{k'}^{a,b} \not\sqsubset s'$ for some $k' < k$, it follows from Equation \eqref{proof Doob: second case}, $\martingale(s')\leq\martingale(u_{k'}^{v_k})$ and again $\martingale(u_p^{v_k})-\martingale(v_p^{v_k}) > b-a > 0$ for all $p\in\{ 1,...,k-1\}$, that 
	\begin{align*}
	\martingale^{a,b} (s')- \martingale^{a,b} (t) 
	 &= \sum_{p=1}^{k'-1} [\martingale(u_p^{s'})-\martingale(v_{p}^{s'})] + \martingale(s')-\martingale(v_{k'}^{s'}) \\
	 &\leq \sum_{p=1}^{k-1} [\martingale(u_p^{v_k})-\martingale(v_{p}^{v_k})] \\
	 &= \martingale^{a,b} (v_k)- \martingale^{a,b} (t),
	\end{align*}
	again implying that $\martingale^{a,b}(s')\leq\martingale^{a,b}(v_k)$.
	Otherwise, if $V_{1}^{a,b} \not\sqsubset s'$, we have that that $\martingale^{a,b}(s')=\martingale^{a,b}(t)$ and hence, again by the fact that $\martingale(u_p^{v_k})-\martingale(v_p^{v_k}) > b-a > 0$ for all $p\in\{ 1,...,k-1\}$, we find that 
	$\martingale^{a,b}(v_k)-\martingale^{a,b}(t) 
		= \sum_{p=1}^{k-1} [\martingale(u_p^{v_k})-\martingale(v_{p}^{v_k})] \geq 0$,
	so $\martingale^{a,b}(s')\leq\martingale^{a,b}(v_k)$.
	All together, this shows that Equation \eqref{eq: proof doob: 3} holds in general.

	For the next part of the proof, for any $k\in\nats$, we use $u_{k}^{\omega}$ and $v_{k}^{\omega}$ to denote the respective situations in $U_{k}^{a,b}$ and $V_{k}^{a,b}$ such that $\omega$ passes through $u_{k}^{\omega}$ and $v_{k}^{\omega}$.
	Now consider any $\ell > n$ such that $\martingale^{a,b}(\omega^\ell)-\martingale^{a,b}(\omega^n) < 0$.
	Then Equation \eqref{eq: proof doob: 3} implies that $V_k^{a,b} \sqsubset \omega^\ell$ and $U_k^{a,b} \not\sqsubseteq \omega^\ell$ for some $k\in\nats$.
	Indeed, assume \emph{ex absurdo} that this is not the case.
	Then there are two remaining possibilities.
	The first is that $U_k^{a,b} \sqsubseteq \omega^\ell$ and $V_{k+1}^{a,b} \not\sqsubset \omega^\ell$ for some $k\in\nats$, implying by Equation \eqref{eq: proof doob: 3} combined with $\omega^n \sqsubset \omega^\ell \sqsubseteq v_{k+1}^{\omega}$ and the definition of $\martingale^{a,b}$, that $\martingale^{a,b}(\omega^\ell)=\martingale^{a,b}(v_{k+1}^{\omega}) \geq \martingale^{a,b}(\omega^n)$.
	The second remaining possibility is that $V_1^{a,b} \not\sqsubset \omega^\ell$, implying by definition of $\martingale^{a,b}$ that $\martingale^{a,b}(\omega^\ell)=\martingale^{a,b}(\omega^n)=\martingale^{a,b}(v_{1}^{\omega})$.
	Both cases contradict $\martingale^{a,b}(\omega^\ell)-\martingale^{a,b}(\omega^n) < 0$, so indeed we have that $V_k^{a,b} \sqsubset \omega^\ell$ and $U_k^{a,b} \not\sqsubseteq \omega^\ell$ for some $k\in\nats$.

	We separate three cases.
	If $V_{k'}^{a,b} \sqsubset \omega^n$ and $U_{k'}^{a,b} \not\sqsubset \omega^n$ for some $k' \leq k$, we have by Equation \eqref{proof Doob: second case} that 
	\begingroup
	\allowdisplaybreaks
	\begin{align*}
	\martingale^{a,b}(\omega^\ell)-\martingale^{a,b}(\omega^n)
	&= \martingale(\omega^\ell)-\martingale(v_k^\omega) + \sum_{p=1}^{ k-1} [\martingale(u_{p}^\omega)-\martingale(v_{p}^\omega)] \\
	&\qquad \quad-\martingale(\omega^n) + \martingale(v_{k'}^\omega)-\sum_{p=1}^{ k'-1} [\martingale(u_{p}^\omega)-\martingale(v_{p}^\omega)] \\
	&=  \martingale(\omega^\ell)  -\martingale(\omega^n) + \sum_{p=k'}^{ k-1} [\martingale(u_{p}^\omega)-\martingale(v_{p+1}^\omega)] \\
	&\geq \martingale(\omega^\ell)-\martingale(\omega^n) + (k-k')(b-a) 
	\geq \martingale(\omega^\ell)-\martingale(\omega^n).
	\end{align*}
	\endgroup
	If $U_{k'}^{a,b} \sqsubset \omega^n$ and $V_{k'+1}^{a,b} \not\sqsubset \omega^n$ for some $k' < k$, then we have that $\martingale(\omega^n) \geq \martingale(v_{k'+1}^{\omega})$, implying by Equation \eqref{proof Doob: second case} and \eqref{proof Doob: third case} that 
	\begin{align*}
	\martingale^{a,b}(\omega^\ell)-\martingale^{a,b}(\omega^n)
	&= \martingale(\omega^\ell)-\martingale(v_k^\omega) + \sum_{p=1}^{ k-1} [\martingale(u_{p}^\omega)-\martingale(v_{p}^\omega)] \\
	&\qquad \qquad \qquad-\sum_{p=1}^{ k'} [\martingale(u_{p}^\omega)-\martingale(v_{p}^\omega)] \\
	&=  \martingale(\omega^\ell)-\martingale(v_{k'+1}^{\omega}) + \sum_{p=(k'+1)}^{ k-1} [\martingale(u_{p}^\omega)-\martingale(v_{p+1}^\omega)] \\
	&\geq \martingale(\omega^\ell)-\martingale(v_{k'+1}^{\omega}) + (k-k'-1)(b-a) \\
	&\geq \martingale(\omega^\ell)-\martingale(\omega^n).
	\end{align*}
	%  definition of $\martingale^{a,b}$ that $\martingale^{a,b}(\omega^\ell)-\martingale^{a,b}(\omega^n)=\martingale(\omega^\ell)-\martingale(\omega^n)$.
	% If $V_k^{a,b} \not\sqsubseteq \omega^n$, meaning that $\omega^n$ precedes $v_k^{\omega}$, 
	% we either have that $U_{k'}^{a,b} \sqsubseteq \omega^n$ and $V_{k'+1}^{a,b} \not\sqsubseteq \omega^n$ for some $k' < k$, implying that $\martingale^{a,b}(\omega^n)=\martingale^{a,b}(u_{k'}^{\omega})=\martingale^{a,b}(v_{k'+1}^{\omega})\leq\martingale^{a,b}(v_{k}^{\omega})$, where the last inequality follows from Equation \eqref{proof Doob: third case}.
	% If not, we have that $V_{k'}^{a,b} \sqsubseteq \omega^n$ and $U_{k'}^{a,b} \not\sqsubseteq \omega^n$ for some $k' < k$, implying that $\martingale^{a,b}(\omega^n) < b\leq\martingale^{a,b}(u_{k-1}^{\omega})=\martingale^{a,b}(v_k^{\omega})$.
	% we have by Equation \eqref{eq: proof doob: 3} that 
	% \begin{align*}
	% \martingale^{a,b}(\omega^\ell)-\martingale^{a,b}(\omega^n)
	% &= \martingale^{a,b}(\omega^\ell)-\martingale^{a,b}(v_k^{\omega}) + \martingale^{a,b}(v_k^{\omega})-\martingale^{a,b}(\omega^n) \\
	% &\geq \martingale^{a,b}(\omega^\ell)-\martingale^{a,b}(v_k^{\omega}) \\
	% &= \martingale(\omega^\ell)-\martingale(v_k^{\omega}),
	% \end{align*}
	% where we used the fact that $\martingale^{a,b}$ is real on $\omega$ and where the last equality follows from Equation \eqref{proof Doob: eq martingale differences}.
	Finally, if $V_{1}^{a,b} \not\sqsubset \omega^n$, then we find in an analogous way as for the previous case (where $U_{k'}^{a,b} \sqsubset \omega^n$ and $V_{k'+1}^{a,b} \not\sqsubset \omega^n$ for some $k' < k$), that $\martingale^{a,b}(\omega^\ell)-\martingale^{a,b}(\omega^n) \geq \martingale(\omega^\ell)-\martingale(\omega^n)$.
	Hence, in all three cases we have that $\martingale^{a,b}(\omega^\ell)-\martingale^{a,b}(\omega^n) \geq \martingale(\omega^\ell)-\martingale(\omega^n)$.
	Since this holds for any $\ell > n$ such that $\martingale^{a,b}(\omega^\ell)-\martingale^{a,b}(\omega^n) < 0$ and since both infima in Equation \eqref{proof doob: eq infima} are obviously smaller or equal than zero (because the term for $\ell=n$ is zero), the desired inequality follows. 
	Since Equation \eqref{proof doob: eq infima} holds for any $(a,b)\in K$, we have that 
	\begin{align}\label{proof doob: eq infima 2}
	\sum_{(a,b)\in K} w^{a,b}\inf_{\ell \geq n} [\martingale^{a,b}(\omega^\ell)-\martingale^{a,b}(\omega^n)] 
	&\geq \sum_{(a,b)\in K} w^{a,b}\inf_{\ell \geq n} [\martingale(\omega^\ell)-\martingale(\omega^n)] \nonumber \\
	&=\inf_{\ell \geq n} [\martingale(\omega^\ell)-\martingale(\omega^n)].
	\end{align}
	Note that both sums above exist because all terms are smaller or equal than zero and the coefficients $w^{a,b}$ are larger than zero. 
	Moreover, $\inf_{\ell \geq n}  \left[ \martingale^\ast(\omega^\ell)-\martingale^\ast(\omega^n) \right]$ is on its turn equal to
	\begin{align*}
	\MoveEqLeft[2]
	\inf_{\ell \geq n} \left( \sum_{(a,b)\in K} w^{a,b} \martingale^{a,b}(\omega^\ell)-\sum_{(a,b)\in K} w^{a,b} \martingale^{a,b}(\omega^n) \right) \\
	&=\inf_{\ell \geq n} \sum_{(a,b)\in K}  [ w^{a,b} \martingale^{a,b}(\omega^\ell)-w^{a,b} \martingale^{a,b}(\omega^n)] \\
	&=\inf_{\ell \geq n} \sum_{(a,b)\in K} w^{a,b} [\martingale^{a,b}(\omega^\ell)-\martingale^{a,b}(\omega^n)] \\
	&\geq \sum_{(a,b)\in K} w^{a,b}\inf_{\ell \geq n} [\martingale^{a,b}(\omega^\ell)-\martingale^{a,b}(\omega^n)],
	\end{align*}
	where the second equality holds because both sums converge to a real number since $\martingale^\ast(\omega^\ell)$ and $\martingale^\ast(\omega^n)$ are both real.
	Combining this with Equation \eqref{proof doob: eq infima 2}, results in
	\begin{align*}
	\inf_{\ell \geq n}  \left[ \martingale^\ast(\omega^\ell)-\martingale^\ast(\omega^n) \right]
	 \geq\inf_{\ell \geq n} [\martingale(\omega^\ell)-\martingale(\omega^n)].
	\end{align*}
	Since this holds for any $n\in\natz$ such that $\omega^n \sqsupseteq t$, this implies that 
	\begin{align*}
	\liminf_{n\to+\infty}\inf_{\ell \geq n}  \left[ \martingale^\ast(\omega^\ell)-\martingale^\ast(\omega^n) \right]
	 \geq \liminf_{n\to+\infty}\inf_{\ell \geq n} [\martingale(\omega^\ell)-\martingale(\omega^n)].
	\end{align*}
	Now, since $\martingale$ converges to a real number on $\omega$, the right hand side is zero.
	Indeed, for any $\epsilon>0$, since $\martingale$ converges to a real number on $\omega$, there is an $N\in\natz$ such that $\vert \martingale(\omega^\ell)-\martingale(\omega^n) \vert\leq\epsilon$ for all $\ell$ and $n$ larger than $N$.
	Then $0 \geq\inf_{\ell \geq n} [\martingale(\omega^\ell)-\martingale(\omega^n)] \geq-\epsilon$ for all $n$ larger than $N$, and therefore also $0 \geq \liminf_{n\to+\infty}\inf_{\ell \geq n} [\martingale(\omega^\ell)-\martingale(\omega^n)] \geq-\epsilon$.
	Since this holds for any $\epsilon > 0$, we indeed find that $\liminf_{n\to+\infty}\inf_{\ell \geq n} [\martingale(\omega^\ell)-\martingale(\omega^n)]=0$.
	Hence, we have that $\liminf_{n\to+\infty}\inf_{\ell \geq n}  \left[ \martingale^\ast(\omega^\ell)-\martingale^\ast(\omega^n) \right]
	 \geq 0$.
	From this, it follows that $\martingale^\ast$ converges on $\omega$ to either a real number or $+\infty$. 
	Indeed, assume \emph{ex absurdo} that it does not.
	Then $\limsup\martingale^\ast(\omega)-\liminf\martingale^\ast(\omega) > \epsilon$ for some $\epsilon > 0$.
	Hence, then for any $N\in\natz$, there would exist an $n \geq N$ such that $\inf_{\ell \geq n}  \left[ \martingale^\ast(\omega^\ell)-\martingale^\ast(\omega^n) \right] <-\epsilon$, and therefore $\liminf_{n\to+\infty}\inf_{\ell \geq n}  \left[ \martingale^\ast(\omega^\ell)-\martingale^\ast(\omega^n) \right] <-\epsilon$, thereby contradicting $\liminf_{n\to+\infty}\inf_{\ell \geq n}  \left[ \martingale^\ast(\omega^\ell)-\martingale^\ast(\omega^n) \right]
	 \geq 0$. 
	We conclude that $\martingale^\ast$ is a $t$-test supermartingale that converges in the desired way as described by the proposition.
\end{proof}

\begin{theorem}[Doob's Convergence Theorem]\label{Theorem: Doob}
Consider any supermartingale $\martingale\in\setofextsupmartb{}$. 
If $\martingale(s)$ is real for some $s\in\situations$, then $\martingale$ converges to a real number strictly almost surely within $\Gamma(s)$.
\end{theorem}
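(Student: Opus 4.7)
The plan is to use Proposition~\ref{Prop: Doob} together with a second test supermartingale that handles the possibility of divergence to $+\infty$, and then combine them via a convex combination as in Lemma~\ref{lemma:positive:countable:linear:combination}.

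More precisely, fix $s \in \situations$ with $\martingale(s)$ real, and let $B \in \reals$ be a lower bound for $\martingale$. I would first apply Proposition~\ref{Prop: Doob} at $t = s$ to obtain an $s$-test supermartingale $\martingale^\ast_1$ that converges to $+\infty$ on all paths $\omega \in \Gamma(s)$ where $\martingale$ does not converge to an extended real number. This immediately handles the set of paths where the $\liminf$ and $\limsup$ of $\martingale$ differ. The remaining obstacle is to exclude, on a set that is not strictly almost sure, the possibility that $\martingale$ converges to $+\infty$ (it cannot converge to $-\infty$ since $\martingale \geq B$).

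To deal with divergence to $+\infty$, consider the translated and rescaled process
\begin{equation*}
\martingale^\ast_2 \coloneqq \frac{\martingale - B + 1}{\martingale(s) - B + 1},
\end{equation*}
which is well-defined because $\martingale(s) - B + 1 \geq 1$ is a positive real. By \ref{coherence: const add} and the positive scaling contained in Lemma~\ref{lemma:positive:countable:linear:combination}, $\martingale^\ast_2$ is a supermartingale; it is non-negative (in fact $\geq 1/(\martingale(s) - B + 1) > 0$) and satisfies $\martingale^\ast_2(s) = 1$, so it is an $s$-test supermartingale. Moreover, on every path $\omega \in \Gamma(s)$ on which $\martingale(\omega^n) \to +\infty$, we also have $\martingale^\ast_2(\omega^n) \to +\infty$.

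Now apply Lemma~\ref{lemma:positive:countable:linear:combination} to the convex combination $\martingale^\ast \coloneqq \tfrac{1}{2}\martingale^\ast_1 + \tfrac{1}{2}\martingale^\ast_2$. Since both summands are non-negative supermartingales (hence share the common lower bound $0$) and both are equal to $1$ at $s$, the process $\martingale^\ast$ is itself a non-negative supermartingale with $\martingale^\ast(s) = 1$, i.e., an $s$-test supermartingale. Fix any $\omega \in \Gamma(s)$ on which $\martingale$ fails to converge to a real number. Either $\martingale$ does not converge in $\extreals$ on $\omega$, in which case $\martingale^\ast_1(\omega^n) \to +\infty$; or $\martingale$ converges in $\extreals$ but not to a real number, so (since $\martingale \geq B$) it converges to $+\infty$, whence $\martingale^\ast_2(\omega^n) \to +\infty$. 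Using non-negativity of the other term and the positive weights, $\martingale^\ast(\omega^n) \to +\infty$ in either case. By the definition of strict almost sureness within $\Gamma(s)$, this establishes that $\martingale$ converges to a real number strictly almost surely within $\Gamma(s)$. The only subtlety to watch for is the treatment of the possibility that $\martingale(s) = B$ (handled by adding $1$ before rescaling) and the verification that $\martingale^\ast_2$ is genuinely a supermartingale; both are handled by a direct appeal to \ref{coherence: const add} together with Lemma~\ref{lemma:positive:countable:linear:combination}.
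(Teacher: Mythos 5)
Your proof is correct and follows essentially the same route as the paper: the paper first normalises $\martingale$ (without loss of generality) to a non-negative supermartingale with value $1$ at $s$ --- which is exactly your $\martingale^\ast_2$ --- obtains the test supermartingale from Proposition~\ref{Prop: Doob}, and then forms the same convex combination $(\martingale+\martingale^\ast)/2$ to catch both the oscillating paths and the paths diverging to $+\infty$. The only difference is presentational: you keep the original $\martingale$ and write the rescaling explicitly rather than absorbing it into a preliminary reduction.
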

\begin{proof}
As in the proof of Proposition~\ref{Prop: Doob}, we can assume without loss of generality that $\martingale$ is non-negative and that $\martingale(s)=1$.
Furthermore, we know from Proposition~\ref{Prop: Doob} that there is an $s$-test supermartingale $\martingale^\ast$ that converges to $+\infty$ on every path $\omega\in\Gamma(s)$ where $\martingale$ does not converge to an extended real number.
Now, consider the extended real process $\martingale' \coloneqq (\martingale + \martingale^\ast)/2$.
This process is a supermartingale because of Lemma~\ref{lemma:positive:countable:linear:combination}, and it is clearly non-negative because $\martingale$ and $\martingale^\ast$ are.
Moreover, we trivially have that $\martingale'(s)=1$ and therefore, that $\martingale'$ is an $s$-test supermartingale.
Furthermore, consider any path $\omega\in\Gamma(s)$ such that $\martingale(\omega^n)$ does not converge to a real number.
Then either it converges to $+\infty$ or it does not converge in $\extreals$.
In the first case, it follows from the non-negativity of $\martingale^\ast$ that $\martingale'$ also converges to $+\infty$ on $\omega$.
If $\martingale(\omega^n)$ does not converge in $\extreals$, then $\martingale^\ast$ converges to $+\infty$ on $\omega$ and therefore, because $\martingale$ is non-negative, $\martingale'$ also converges to $+\infty$ on $\omega$.
All together, we have that $\martingale'$ is an $s$-test supermartingale that converges to $+\infty$ on every path $\omega\in\Gamma(s)$ where $\martingale$ does not converge to a real number.
\end{proof}

\begin{proposition}\label{prop: liminf can be replaced by lim in definition}
For any $f\in\setofextvariables$ and any $s\in\situations$, we have that
\begin{align*}
\upprevvovkk(f \vert s) =\inf \left\{ \martingale(s) \colon \martingale\in\setofextsupmartb{} \text{ and } \lim \martingale \geq_s f \right\},
\end{align*}
where $\martingale$ is a supermartingale for which $\lim \martingale$ exists within $\Gamma(s)$.
\end{proposition}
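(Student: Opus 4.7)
The plan is to prove both inequalities separately. The direction $\upprevvovkk(f \vert s) \leq \inf\{\martingale(s) \colon \martingale \in \setofextsupmartb{} \text{ and } \lim \martingale \geq_s f\}$ is immediate from Definition~\ref{def:upperexpectation2}: whenever $\lim \martingale$ exists within $\Gamma(s)$ and satisfies $\lim \martingale \geq_s f$, one automatically has $\liminf \martingale \geq_s f$, so the infimum on the right is taken over a subfamily of the one used to define $\upprevvovkk(f \vert s)$ and is therefore at least as large.

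For the converse, I may assume $\upprevvovkk(f \vert s) < +\infty$, the other case being trivial. Fix $\epsilon > 0$ and pick $\martingale \in \setofextsupmartb{}$ with $\liminf \martingale \geq_s f$ and $\martingale(s) \leq \upprevvovkk(f \vert s) + \epsilon/2$. Since $\martingale$ is bounded below and $\martingale(s)$ is bounded above by a real number, $\martingale(s)$ is real, so Proposition~\ref{Prop: Doob} yields an $s$-test supermartingale $\martingale^\ast$ that converges to $+\infty$ on every path in $\Gamma(s)$ where $\martingale$ fails to converge in $\extreals$, and that converges in $\extreals$ on every path in $\Gamma(s)$ where $\martingale$ converges to a real number. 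The idea is then to perturb $\martingale$ by a small positive multiple of $\martingale^\ast$ to force convergence of the combined process everywhere within $\Gamma(s)$.

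Set $\martingale' \coloneqq \martingale + (\epsilon/2)\, \martingale^\ast$. By Lemma~\ref{lemma:positive:countable:linear:combination}, applied using a common lower bound of $\martingale$ and $\martingale^\ast$, $\martingale'$ is a supermartingale, and $\martingale'(s) = \martingale(s) + \epsilon/2 \leq \upprevvovkk(f \vert s) + \epsilon$. The key point is to verify, on every path $\omega \in \Gamma(s)$, that $\lim \martingale'$ exists in $\extreals$ and is at least $f(\omega)$. I split into three cases. If $\martingale$ converges to a real number on $\omega$, the second statement of Proposition~\ref{Prop: Doob} ensures $\martingale^\ast$ converges in $\extreals$ to a non-negative value, so the sum converges in $\extreals$ to a value at least $\lim \martingale(\omega) = \liminf \martingale(\omega) \geq f(\omega)$. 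If $\martingale$ converges to $+\infty$ on $\omega$, then $\martingale' \geq \martingale$ (since $\martingale^\ast \geq 0$) forces $\martingale'(\omega^n) \to +\infty \geq f(\omega)$. If $\martingale$ fails to converge in $\extreals$ on $\omega$, then $\martingale^\ast(\omega^n) \to +\infty$, and since $\martingale$ is bounded below by some real $M$, one gets $\martingale'(\omega^n) \geq M + (\epsilon/2)\martingale^\ast(\omega^n) \to +\infty \geq f(\omega)$. Hence $\martingale'$ is admissible for the infimum on the right, yielding $\inf\{\martingale(s) \colon \lim \martingale \geq_s f\} \leq \upprevvovkk(f \vert s) + \epsilon$, and letting $\epsilon \downarrow 0$ finishes the proof.

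The main technical subtlety lies in the first case: the weaker Theorem~\ref{Theorem: Doob} alone would be insufficient, because it provides no information about $\martingale^\ast$ on paths where $\martingale$ does converge; the ``moreover'' assertion in Proposition~\ref{Prop: Doob} is needed precisely to guarantee that $\martingale^\ast$, and hence $\martingale'$, also converges in $\extreals$ on those paths, so that $\martingale'$ qualifies for the right-hand infimum.
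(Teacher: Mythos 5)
Your proof is correct in substance and follows essentially the same route as the paper's: both use the full strength of Proposition~\ref{Prop: Doob} (including the ``moreover'' clause) to perturb an almost-optimal supermartingale by a small positive multiple of the $t$-test supermartingale $\martingale^\ast$, and both run the same three-case convergence analysis on $\Gamma(s)$. The only slip is the edge case $\upprevvovkk(f\vert s)=-\infty$ (which can occur, e.g.\ for $f\equiv-\infty$, and is not excluded by your reduction to ``$<+\infty$''): there no supermartingale satisfies $\martingale(s)\leq\upprevvovkk(f\vert s)+\epsilon/2$, so you should instead, as the paper does, fix an arbitrary real $\alpha>\upprevvovkk(f\vert s)$ and pick $\martingale$ with $\martingale(s)\leq\alpha$; the rest of your argument then goes through verbatim.
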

\begin{proof}
The inequality `$\leq$' is trivially satisfied since $\liminf\martingale =_s \lim \martingale$ for any supermartingale $\martingale$ such that the limit $\lim \martingale$ exists within $\Gamma(s)$.
It remains to prove the other inequality.
If $\upprevvovkk(f \vert s)=+\infty$, this is trivially satisfied.
If $\upprevvovkk(f \vert s)$ is real or $-\infty$, we induce from Definition \ref{def:upperexpectation2} that, for any real $\alpha > \upprevvovkk(f \vert s)$, there is a supermartingale $\martingale\in\setofextsupmartb{}$ such that $\martingale(s)\leq\alpha$ and $\liminf\martingale \geq_s f$.
Hence, because $\martingale$ is bounded below and $\martingale(s)\leq\alpha$ we have that $\martingale(s)\in\reals{}$.
Then according to Proposition~\ref{Prop: Doob} there is an $s$-test supermartingale that converges to $+\infty$ on all paths $\omega\in\Gamma(s)$ where $\martingale$ does not converge in $\extreals$ and converges in $\extreals$ on all paths $\omega\in\Gamma(s)$ where $\martingale$ converges in $\reals{}$.

Fix any $\epsilon>0$ and consider the process $\martingale'$ that is equal to $\martingale(t) + \epsilon \martingale^\ast(t)$ for all situations $t \sqsupseteq s$ and equal to $\martingale(s) + \epsilon \martingale^\ast(s)\leq\alpha + \epsilon$ for all situations $t \not\sqsupseteq s$.
Then $\martingale'$ is a supermartingale because of Lemma~\ref{lemma:positive:countable:linear:combination} and we have that $\liminf\martingale' \geq_s f$ because $\epsilon \martingale^\ast$ is non-negative and $\liminf\martingale \geq_s f$.
Moreover, for all $\omega\in\Gamma(s)$, this process converges in $\extreals$.
Indeed, for any $\omega\in\Gamma(s)$, if $\martingale$ does not converge in $\extreals$, $\martingale^\ast$ converges to $+\infty$ and hence also $\martingale'$ because $\martingale$ is bounded below and $\epsilon$ is positive.
If $\martingale$ does converge in $\extreals$, it converges either to a real number or to $+\infty$ (convergence to $-\infty$ is impossible because it is bounded below).
If it converges to a real number, $\martingale^\ast$ converges in $\extreals$ and hence $\martingale'$ also converges in $\extreals$.
If it converges to $+\infty$, then so does $\martingale'$ because $\epsilon \martingale^\ast$ is non-negative.
Hence, $\martingale'$ converges in $\extreals$, so the limit $\lim \martingale'(\omega)$ exists for all $\omega\in\Gamma(s)$.
Moreover, recall that $\lim \martingale'=\liminf\martingale' \geq_s f$.
Hence, we have that
\begin{align*}
\inf \left\{ \martingale(s) \colon \martingale\in\setofextsupmartb{} \text{ and } \lim \martingale \geq_s f \right\} 
\leq \martingale'(s)  
\leq \alpha + \epsilon.
\end{align*}
This holds for any $\epsilon > 0$ and any $\alpha > \upprevvovkk(f \vert s)$, which implies that indeed
\begin{align*}
\inf \left\{ \martingale(s) \colon \martingale\in\setofextsupmartb{} \text{ and } \lim \martingale \geq_s f \right\}\leq\upprevvovkk(f \vert s).
\end{align*}
\end{proof}

\begin{theorem}[L\'evy's zero-one law]\label{Theorem: Levy}
	For any $f\in\setofextvariablesb$ and any $s'\in\situations$, the event
	\begin{align*}
		A \coloneqq\Big\{\omega\in\Omega \colon \liminf_{n\to+\infty} \upprevvovkk(f \vert \omega^n) \geq f(\omega) \Big\} \text{ is strictly almost sure within } \Gamma(s').
	\end{align*}
\end{theorem}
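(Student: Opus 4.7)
I would construct an $s'$-test supermartingale that diverges to $+\infty$ on $A^c \cap \Gamma(s')$. Using \ref{vovk coherence 6}, shift $f$ by a real constant to assume $f \geq 0$, and set $M(s) \coloneqq \upprevvovkk(f \vert s)$, which is a supermartingale by Corollary~\ref{corollary: upprevvovk is supermartingale}. Decompose
\[
A^c \cap \Gamma(s') \subseteq \bigcup_{\substack{p,q,\delta \in \mathbb{Q}^+ \\ p+\delta < q}} E_{p,q,\delta},\qquad E_{p,q,\delta} \coloneqq \set*{\omega \in \Gamma(s') \colon M(\omega^n) < p-\delta \text{ for infinitely many } n,\ f(\omega) \geq q},
\]
so that, by Lemma~\ref{lemma:positive:countable:linear:combination}, a positively weighted convex combination reduces the task to producing, for each triple $(p,q,\delta)$, an $s'$-test supermartingale $T$ diverging to $+\infty$ on $E_{p,q,\delta}$.

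Fix such a triple, set $\mu \coloneqq q/p > 1$, and define interleaved cuts by $V_0 \coloneqq \{s'\}$ and, for $k\geq 1$, letting $\widetilde{U}_k$ be the cut of first situations after $V_{k-1}$ with $M < p-\delta$, and $V_k$ the cut of first situations $s$ after $\widetilde{U}_k$ with $\hat{N}^{\tilde{u}_k}(s) \geq \mu\,\hat{N}^{\tilde{u}_k}(\tilde{u}_k)$, where $\tilde{u}_k$ denotes the unique element of $\widetilde{U}_k$ on the path to $s$. For each $u \in \widetilde{U}_k$, Definition~\ref{def:upperexpectation2} and Lemma~\ref{lemma: infima of supermartingales} yield a non-negative supermartingale on $\Gamma(u)$ with liminf $\geq_u f$ and value at $u$ strictly less than $p-\delta$; adding the constant $c \coloneqq p-\delta/2 - (\text{value at } u) > \delta/2$ then produces a supermartingale $\hat{N}^u$ with $\hat{N}^u(u)=p-\delta/2$, $\hat{N}^u \geq \delta/2$ on $\Gamma(u)$ (a uniform lower bound), and $\liminf \hat{N}^u \geq_u f$. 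Every $\omega \in E_{p,q,\delta}$ traverses every $\widetilde{U}_k$ by hypothesis on $M$, and every $V_k$ because $\liminf \hat{N}^{\tilde{u}_k}(\omega) \geq f(\omega) \geq q > \mu(p-\delta/2) = \mu\,\hat{N}^{\tilde{u}_k}(\tilde{u}_k)$ on $\{f\geq q\}$. Define $T$ piecewise: $T \equiv 1$ outside $\Gamma(s')$ and on $[s',\widetilde{U}_1)$; $T(s) \coloneqq \mu^{k-1}\hat{N}^{\tilde{u}_k}(s)/\hat{N}^{\tilde{u}_k}(\tilde{u}_k)$ on $[\widetilde{U}_k,V_k)$; and $T(s) \coloneqq \mu^k$ on $[V_k,\widetilde{U}_{k+1})$.

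The process $T$ is non-negative and has $T(s')=1$. Its local supermartingale inequality at every $s$ reduces, via \ref{coherence: homog for real lambda} and \ref{coherence: monotonicity}, to the child-wise bound $T(sx) \leq \mu^{k-1}\hat{N}^{\tilde{u}_k}(sx)/\hat{N}^{\tilde{u}_k}(\tilde{u}_k)$, which holds with equality in phase and with the desired inequality at any transition $sx \in V_k$ by the very definition of $V_k$; the supermartingale property of $\hat{N}^{\tilde{u}_k}$ then closes the estimate. On $E_{p,q,\delta}$, the uniform bound $\hat{N}^{\tilde{u}_k} \geq \delta/2$ yields $T(\omega^n) \geq \mu^k \min\bigl(1,\,(\delta/2)/(p-\delta/2)\bigr)$ throughout $[V_k,V_{k+1})$, so $T(\omega^n) \to +\infty$. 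The main obstacle is precisely this interleaving of cuts, which reconciles the tension between the per-crossing multiplicative gain and the common-lower-bound requirement for combining supermartingales: defining $V_k$ as the first $\mu$-fold growth of $\hat{N}^{\tilde{u}_k}$ matches the cash-out value $\mu^k$ to the invested position and preserves the supermartingale inequality at every phase transition. The degenerate case $M(s')=+\infty$ is handled by first running the construction on the truncation $f \wedge K$ and then letting $K\to+\infty$, using continuity of $\upprevvovkk$ with respect to non-decreasing sequences developed later in the paper.
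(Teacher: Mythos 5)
Your proof is correct and follows essentially the same strategy as the paper's: interleaved cuts in which one phase waits for $\upprevvovkk(f\vert\cdot)$ to drop below a rational threshold, the next phase ``invests'' in a witness supermartingale from Definition~\ref{def:upperexpectation2} until it has gained a fixed multiplicative factor, and a countable convex combination over rational parameters (via Lemma~\ref{lemma:positive:countable:linear:combination}) finishes the job; your additive $\delta$-margin plays the same role as the paper's reduction to $\inf f>0$ in keeping the denominators of the multiplicative updates bounded away from zero. One caution: your final sentence invokes continuity of $\upprevvovkk$ with respect to non-decreasing sequences (Theorem~\ref{theorem: upward convergence game}), which in this paper is itself proved \emph{from} L\'evy's zero-one law, so that appeal would be circular --- but it is also unnecessary, since nothing in your construction requires $\upprevvovkk(f\vert s')$ to be finite (the cuts, the witnesses $\hat{N}^{u}$, and the crossing argument only use the local values $\upprevvovkk(f\vert\omega^n)$ and the bound $f\geq q$ on $E_{p,q,\delta}$), so you should simply delete that step.
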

\begin{proof}
	 % The proof is heavily inspired by the work of Shafer and Vovk~\cite[Section 6.7]{shafer2011levy}. 
	 % However, they use supermartingales as extended real processes, whereas we use real processes. 
	 % Moreover, they give a direct proof for extended real gambles that are bounded below. 
	 % We will do this in a separate corollary.
	 Let $c\in\reals{}$ be any real constant.
	 Since, for any $s\in\situations$, $\upprevvovkk(\cdot \vert s)$ satisfies~\ref{vovk coherence 6}, we have that $\liminf_{n\to+\infty} \upprevvovkk(f \vert \omega^n) \geq f(\omega)$ if and only if $\liminf_{n\to+\infty} \upprevvovkk(f + c \vert \omega^n) \geq f(\omega) + c$.
	 Therefore, and because f is bounded below, we can assume without loss of generality that $f$ is a gamble such that $\inf f > 0$. 

	We now associate with any couple of rational numbers $0<a<b$ the following recursively constructed sequences of cuts $\{U_k^{a,b}\}_{k\in\natz}$ and $\{V_k^{a,b}\}_{k\in\nats}$. 
	Let $U_0^{a,b} \coloneqq\{ s' \}$ and, for $k\in\nats$,
	\begin{enumerate}
		\item 
		let
		\begin{align*}
			V_k^{a,b} \coloneqq\{ s\in\situations \colon  U_{k-1}^{a,b} \sqsubset s , \ \upprevvovkk(f \vert s) < a \text{ and } (\forall t\in (U_{k-1}^{a,b} , s)) \ \upprevvovkk(f \vert t) \geq a \};
		\end{align*} 
		\item if $V_k^{a,b}$ is non-empty, choose a positive supermartingale $\martingale_{k}^{a,b}\in\setofextsupmartb{}$ such that 
		$\martingale_{k}^{a,b}(s) < a$ and $\liminf\martingale_k^{a,b}  \geq_s f$ for all $s\in V_k^{a,b}$, and let
		\begin{align*}
			U_k^{a,b} \coloneqq\{s\in\situations \colon V_{k}^{a,b} \sqsubset s : \martingale_{k}^{a,b}(s) > b \text{ and } (\forall t\in (V_{k}^{a,b} , s)) \ \martingale_{k}^{a,b}(t) \leq b \};
		\end{align*}
		\item
		if $V_k^{a,b}$ is empty, let $U_k^{a,b} \coloneqq\emptyset$.
	\end{enumerate}
	The cuts $U_k^{a,b}$ and $V_k^{a,b}$ can be partial or complete.
	We now first show that, if $V_k^{a,b}$ is non-empty, there always is a supermartingale $\martingale_{k}^{a,b}$ that satisfies the conditions above. 
	We infer from the definition of the cut $V_k^{a,b}$ that
	\begin{equation*}
	\inf \bigg\{ \martingale(s) :  \martingale\in\setofextsupmartb \text{ and } \liminf\martingale \geq_s f \bigg\} < a \text{ for all } s\in V_k^{a,b}.
	 \end{equation*}
	So, for all $s\in V_k^{a,b}$, we can choose a supermartingale $\martingale_{k,s}^{a,b}$ such that $\martingale_{k,s}^{a,b}(s) < a$ and $\liminf\martingale_{k,s}^{a,b} \geq_s f$. 
	Consider now the extended real process $\martingale_{k}^{a,b}$ defined, for all $ t\in\situations$, by
	\begin{align*}
		\martingale_{k}^{a,b}(t) \coloneqq
		\begin{cases}
		\martingale_{k,s}^{a,b}(t) \ &\text{ if } s \sqsubseteq t \text{ for some } s\in V_k^{a,b}; \\
		a \ &\text{ otherwise.}
		\end{cases}
	\end{align*}
	It is clear that $\martingale_{k}^{a,b}(s) < a \text{ and } \liminf\martingale_{k}^{a,b}  \geq_s f$ for all $s\in V_k^{a,b}$.

	We show that $\martingale_{k}^{a,b}$ is a positive supermartingale.
	For all $s\in V_k^{a,b}$, it follows from Lemma~\ref{lemma: infima of supermartingales} that
	\begin{align}\label{eq: Levy martingale larger than inf f}
	\martingale_{k,s}^{a,b}(t) \geq\inf_{\omega\in\Gamma(t)} \liminf\martingale_{k,s}^{a,b}(\omega) \geq\inf_{\omega\in\Gamma(t)} f(\omega) \geq\inf f > 0 \text{ for all } t \sqsupseteq s.
	\end{align}
	Since also $a > 0$, it follows that $\martingale_{k}^{a,b}$ is positive and therefore bounded below.
	To show that $\lupprev{t}(\martingale_{k}^{a,b}(t\andstate))\leq\martingale_{k}^{a,b}(t)$ for all $t\in\situations$, fix any $t\in\situations$ and consider two cases. 
	If $V_k^{a,b} \sqsubseteq t$, then $\martingale_{k}^{a,b}(t)=\martingale_{k,s}^{a,b}(t)$ and $\martingale_{k}^{a,b}(t\andstate)=\martingale_{k,s}^{a,b}(t\andstate)$ for some $s\in V_k^{a,b}$, and therefore 
	\begin{align*}
	\lupprev{t}(\martingale_{k}^{a,b}(t\andstate))=\lupprev{t}(\martingale_{k,s}^{a,b}(t\andstate))\leq\martingale_{k,s}^{a,b}(t)=\martingale_{k}^{a,b}(t).
	\end{align*}
	If $V_k^{a,b} \not\sqsubseteq t$, then, for any $x\in\statespace$, we have either $tx\in V_k^{a,b}$ and therefore $\martingale_{k}^{a,b}(tx) < a=\martingale_{k}^{a,b}(t)$, or $V_k^{a,b} \not\sqsubseteq tx$, and therefore $\martingale_{k}^{a,b}(tx)=a=\martingale_{k}^{a,b}(t)$.
	Hence, we have that $\martingale_{k}^{a,b}(t\andstate)\leq\martingale_{k}^{a,b}(t)=a$, and therefore, by \ref{coherence: monotonicity} and \ref{coherence: bounds}, that $\lupprev{t}(\martingale_{k}^{a,b}(t\andstate))\leq\martingale_{k}^{a,b}(t)$. 
	We conclude that $\martingale_{k}^{a,b}$ is a positive supermartingale.

	Next, consider the extended real process $\mathscr{T}^{a,b}$ defined by $\mathscr{T}^{a,b}(s) \coloneqq 1$ for all $s \not\sqsupset s'$, and 
	\begin{align*}
	 	\mathscr{T}^{a,b} (s\andstate) \coloneqq 
	 	\begin{cases}
	 		\martingale_{k}^{a,b}(s\andstate) \mathscr{T}^{a,b}(s) / \martingale_{k}^{a,b}(s)   &\text{ if } s\in  [V_{k}^{a,b} , U_{k}^{a,b}) \text{ for some } k\in\nats;\\
	 		\mathscr{T}^{a,b}(s) &\text{ otherwise,}
	 	\end{cases}
	 \end{align*}
	 for all $s \sqsupseteq s'$.
	 We prove that this process is a positive $s'$-test supermartingale that converges to $+\infty$ on all paths $\omega\in\Gamma(s')$ such that
	 \begin{equation}\label{Eq: levy conv to infty}
	 \liminf_{n\to+\infty} \upprevvovkk(f \vert \omega^n) < a < b < f(\omega).
	 \end{equation}
	 That $\mathscr{T}^{a,b}$ is well-defined follows from the fact that, for any $k\in\nats$ and any $s\in  [V_{k}^{a,b} , U_{k}^{a,b})$, $\martingale_{k}^{a,b}(s)$ is positive and moreover real because of the definition of $U_{k}^{a,b}$.
	 Moreover, the process $\mathscr{T}^{a,b}$ is also positive --- and therefore bounded below --- because $\martingale_k^{a,b}(s)$ is real and positive and $\martingale_k^{a,b}(s\andstate)$ is positive, and hence $\martingale_{k}^{a,b}(s\andstate)/ \martingale_{k}^{a,b}(s)$ positive, for any $k\in\nats$ and any $s\in [V_{k}^{a,b}, U_{k}^{a,b})$.
	 Furthermore, if $s\in  [V_{k}^{a,b} , U_{k}^{a,b})$ for some $k\in\nats$, we have that
	 \begin{align*}
	 \lupprev{s}(\mathscr{T}^{a,b}(s\andstate))=\lupprev{s}(\martingale_{k}^{a,b}(s\andstate) \mathscr{T}^{a,b}(s) / \martingale_{k}^{a,b}(s)) 
	 &\overset{\text{\ref{coherence: homog for ext lambda}}}{=} \lupprev{s}(\martingale_{k}^{a,b}(s\andstate)) \mathscr{T}^{a,b}(s) / \martingale_{k}^{a,b}(s)  \\
	 &\leq \mathscr{T}^{a,b}(s).
	 \end{align*}
	 If this is not the case, we have that $\lupprev{s}(\mathscr{T}^{a,b}(s\andstate))=\lupprev{s}(\mathscr{T}^{a,b}(s))=\mathscr{T}^{a,b}(s)$ because of \ref{coherence: bounds}.
	 Hence, we have that $\lupprev{s}(\mathscr{T}^{a,b}(s\andstate))\leq\mathscr{T}^{a,b}(s)$ for all $s\in\situations$, which together with the fact that $\mathscr{T}^{a,b}(s')=1$, allows us to conclude that $\mathscr{T}^{a,b}$ is indeed a positive $s'$-test supermartingale.
	 % As, for all $s\in\situations$, $\mu \mathscr{T}^{a,b} (s)$ is either equal to $\mu \martingale_{k-1}^{a,b}(s) > 0$ for some $k\in\nats$, or equal to $1$, we get that $\mu \mathscr{T}^{a,b} (s)$ is positive $\frac{\mathscr{T}^{a,b} (s\andstate) }{\mathscr{T}^{a,b} (s)} > 0$. Consequently, $\mathscr{T}^{a,b}$ is finite ($\lambda_{\martingale_{k-1}^{a,b}}$ is real-valued) and positive.
	 % Also, for all $s\in\situations$, $\lupprev(\frac{\mathscr{T}^{a,b} (s\andstate) }{\mathscr{T}^{a,b} (s)})$ is either equal to $\lupprev(\lambda_{\martingale_{k-1}^{a,b}}(s\andstate)) \leq 1$ or equal to $1$, such that $\mathscr{T}^{a,b}$ is a supermartingale and a test supermartingale.

	 Next, we show that $\mathscr{T}^{a,b}$ converges to $+\infty$ on all paths $\omega\in\Gamma(s')$ for which \eqref{Eq: levy conv to infty} holds.
	 Consider such a path $\omega$.
	 Then $\omega$ goes through all the cuts $U_0^{a,b} \sqsubset V_1^{a,b} \sqsubset U_1^{a,b} \sqsubset ... \sqsubset V_n^{a,b} \sqsubset U_n^{a,b} \sqsubset ... $. 
	 Indeed, it is trivial that $\omega$ goes through $U_0^{a,b}=\{s'\}$. 
	 Furthermore, it follows from $\liminf_{n\to+\infty} \upprevvovkk(f \vert \omega^n) < a$ that there is an $n_1\in\nats$ such that $\omega^{n_1} \sqsupset s'$ and $\upprevvovkk(f \vert \omega^{n_1}) < a$. 
	 Take the first such $n_1\in\nats$. 
	 Then it follows from the definition of $V_1^{a,b}$ that $\omega^{n_1}\in V_1^{a,b}$. 
	 Next, it follows from $\liminf_{n\to+\infty} \martingale_1^{a,b} (\omega^n) \geq f(\omega) > b$ that there exists some $m_1\in\nats$ for which $m_1 > n_1$ and $\martingale_1^{a,b} (\omega^{m_1}) > b$. 
	 Take the first such $m_1$.
	 Then it follows from the definition of $U_1^{a,b}$ that $\omega^{m_1}\in U_1^{a,b}$. 
	 Repeating similar arguments over and over again allows us to conclude that $\omega$ indeed goes through all the cuts $U_0^{a,b} \sqsubset V_1^{a,b} \sqsubset U_1^{a,b} \sqsubset ... \sqsubset V_n^{a,b} \sqsubset U_n^{a,b} \sqsubset ... $.

	 In what follows, we use the following notation. 
	 For any situation~$s$ and for any $k\in\natz$, when $U_k^{a,b} \sqsubset s$, we denote by $u_k^s$ the (necessarily unique) situation in $U_k^{a,b}$ such that $u_k^s \sqsubset s$; observe that $u_0^s=\Box$. 
	 Similarly, for any $k\in\nats$, when $V_k^{a,b} \sqsubset s$, we denote by $v_k^s$ the (necessarily unique) situation in $V_k^{a,b}$such that $v_k^s \sqsubset s$.

	 For any situation~$s$ on a path $\omega\in\Gamma(s')$ satisfying \eqref{Eq: levy conv to infty} we now have one of the following cases:
	 \begin{enumerate}
	 \item
	 The first case is that $s \not\sqsupset V_1^{a,b}$. Then we have 
	 \begin{equation*}
	 \mathscr{T}^{a,b} (s)=\mathscr{T}^{a,b} (s')=1.
	 \end{equation*}
	  \item
	 The second case is that $s\in (V_k^{a,b} , U_{k}^{a,b}]$ for some $k\in\nats$. Then we have
	 \begin{equation*}
	 \mathscr{T}^{a,b} (s)=\Bigg( \prod_{\ell=1}^{k-1} \frac{\martingale_{\ell}^{a,b}(u_\ell^{s})}{\martingale_{\ell}^{a,b}(v_{\ell}^{s})} \Bigg) \frac{\martingale_{k}^{a,b}(s)}{\martingale_{k}^{a,b}(v_{k}^{s})}.
	 \end{equation*} 
	 Since $\martingale_k^{a,b}(s) \geq\inf f > 0$ because of Equation \eqref{eq: Levy martingale larger than inf f}, and, for all $\ell\in\{ 1,...,k\}$, $\martingale_{\ell}^{a,b}(u_\ell^{s}) > b > 0$ and $0 < \martingale_{\ell}^{a,b}(v_\ell^{s}) < a$, we get
	 \begin{equation*}
	 \mathscr{T}^{a,b} (s) \geq \Big( \frac{b}{a} \Big)^{k-1}  \frac{\martingale_{k}^{a,b}(s)}{a} \geq \Big( \frac{b}{a} \Big)^{k-1} \Big( \frac{\inf f}{a} \Big). 
	 \end{equation*} 
	 \item
	 The third case is that $s\in (U_k^{a,b} , V_{k+1}^{a,b}]$ for some $k\in\nats$. Then we have
	 \begin{equation*}
	 \mathscr{T}^{a,b} (s)=\prod_{\ell=1}^{k} \frac{\martingale_{\ell}^{a,b}(u_\ell^{s})}{\martingale_{\ell}^{a,b}(v_{\ell}^{s})}.
	 \end{equation*} 
	 As for all $\ell\in\{ 1,...,k\}$, $\martingale_{\ell-1}^{a,b}(u_\ell^{s}) > b > 0$ and $0 < \martingale_{\ell}^{a,b}(v_\ell^{s}) < a$, we find that
	 \begin{equation*}
	 \mathscr{T}^{a,b} (s) > \Big( \frac{b}{a} \Big)^{k}. 
	 \end{equation*} 
	\end{enumerate}
	Because $\inf f > 0$ and $\frac{b}{a} > 1$, and because $\omega$ goes through all the cuts, we conclude that indeed $\lim_{n\to+\infty} \mathscr{T}^{a,b} (\omega^n)=+\infty$.

	To finish, we use the countable set of rational couples $K \coloneqq\{ (a,b)\in\mathbb{Q}^2 : 0<a<b \}$ to define the process $\mathscr{T}$:
	\begin{equation*}
		\mathscr{T} \coloneqq\sum_{(a,b)\in K} w^{a,b} \mathscr{T}^{a,b},
	\end{equation*}
	with coefficients $w^{a,b}>0$ that sum to $1$. 
	Hence, $\mathscr{T}$ is a countable convex combination of the positive $s'$-test supermartingales $\mathscr{T}^{a,b}$. 
	By Lemma~\ref{lemma:positive:countable:linear:combination}, $\mathscr{T}$ is then also a supermartingale.
	Moreover, it is positive, because all $\mathscr{T}^{a,b}$ are positive and it is clear that $\mathscr{T}(s')=1$.
	Hence, $\mathscr{T}$ is a positive $s'$-test supermartingale.
	Moreover, $\mathscr{T}$ converges to $+\infty$ on the paths $\omega\in\Gamma(s')$ where $\liminf_{n\to+\infty} \upprevvovkk(f \vert \omega^n) < f(\omega)$. 
	Indeed, consider such a path $\omega$. 
	Then since $f(\omega) \geq\inf f > 0$, there is at least one couple $(a',b')\in K$ such that $\liminf_{n\to+\infty} \upprevvovkk(f \vert \omega^n) < a' < b' < f(\omega)$, and as a consequence $\lim_{n\to+\infty} \mathscr{T}^{a',b'}(\omega^n)=+\infty$. 
	Then also $\lim_{n\to+\infty} w^{a',b'} \mathscr{T}^{a',b'}(\omega^n)=+\infty$ since $w^{a',b'} > 0$. 
	For all other couples $(a,b)\in K \setminus (a',b')$, we have $w^{a,b} \mathscr{T}^{a,b} > 0$, so $\mathscr{T}$ indeed converges to $+\infty$ on $\omega$.
	\end{proof}

	% \begin{corollary}\label{Corollary: Levy}
	% For any $f\in\setofextvariablesb$ and any $s\in\situations$, the event
	% \begin{align*}
	% 	A \coloneqq\Big\{\omega\in\Gamma(s) \colon \liminf_{n\to+\infty} \upprevvovkk(f \vert \omega^n) \geq f(\omega) \Big\} \text{ is strictly almost sure within $\Gamma(s)$.}
	% \end{align*}
	% \end{corollary}
	% \begin{proof}
	% Again, consider the restricted tree that follows $s$ and apply Proposition~\ref{Theorem: Levy} to this tree.
	% Then it follows that there is a non-negative extended real-valued map $\martingale$ defined on all situations $t \sqsupseteq s$, such that $\martingale(s)=1$ and $\lupprev{t}(\martingale(t\andstate))\leq\martingale(t)$ for all $t \sqsupseteq s$ and that moreover converges to $+\infty$ on all paths $\omega\in\Gamma(s)$ where $\liminf_{n\to+\infty} \upprevvovkk(f \vert \omega^n) < f(\omega)$.
	% Hence, it suffices to extend the domain of $\martingale$, by letting it be equal to $\martingale(s)$ for all situations that do not follow $s$, to obtain an $s$-test supermartingale defined on all situations that converges to $+\infty$ on all paths $\omega\in\Gamma(s)$ where $\liminf_{n\to+\infty} \upprevvovkk(f \vert \omega^n) < f(\omega)$.
	% \end{proof}

\begin{proposition}\label{prop: inf becomes min in definition}
For any $f\in\setofextvariablesb$, the infimum in Equation \eqref{Eq: lemmaMonotoneConvergenceSAS 1} is attained.
\end{proposition}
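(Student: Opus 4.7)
The plan is to exhibit an explicit supermartingale that attains the infimum in~\eqref{Eq: lemmaMonotoneConvergenceSAS 1}, namely the process $\martingale^*$ defined by $\martingale^*(t) \coloneqq \upprevvovkk(f \vert t)$ for every $t \in \situations{}$. First, I would invoke Corollary~\ref{corollary: upprevvovk is supermartingale}, which applies because $f \in \setofextvariablesb{}$, to conclude that $\martingale^*$ belongs to $\setofextsupmartb{}$; this takes care of the bounded-below and supermartingale requirements in one shot. By the very definition of $\martingale^*$ we have $\martingale^*(s) = \upprevvovkk(f \vert s)$, so this candidate realises the target value.

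What remains is to verify that $\liminf \martingale^* \geq_s f$ strictly almost surely within $\Gamma(s)$. This is precisely the content of L\'evy's zero-one law (Theorem~\ref{Theorem: Levy}) applied with the choice $s' = s$, since by definition of $\liminf$ for processes,
\begin{equation*}
\liminf \martingale^*(\omega) = \liminf_{n \to +\infty} \martingale^*(\omega^n) = \liminf_{n \to +\infty} \upprevvovkk(f \vert \omega^n) \text{ for all } \omega \in \samplespace{}.
\end{equation*}
Theorem~\ref{Theorem: Levy} gives an $s$-test supermartingale that converges to $+\infty$ on exactly the set $\Gamma(s) \setminus A$ where the inequality fails, witnessing that $\liminf \martingale^* \geq_s f$ s.a.s. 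Together with the previous paragraph this shows that $\martingale^*$ lies in the set on the right-hand side of~\eqref{Eq: lemmaMonotoneConvergenceSAS 1} and attains its infimum, so the proposition follows.

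I do not anticipate any real obstacle here: all the heavy lifting was already done in Corollary~\ref{corollary: upprevvovk is supermartingale} (for the supermartingale character of $t \mapsto \upprevvovkk(f \vert t)$) and in Theorem~\ref{Theorem: Levy} (for the strictly almost sure lower bound). The proposition is essentially a packaging of those two results into an existence statement for an optimal supermartingale; the only thing one has to do is observe that the L\'evy statement is exactly the missing ingredient.
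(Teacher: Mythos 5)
Your proposal is correct and follows essentially the same route as the paper: both define the process $t \mapsto \upprevvovkk(f \vert t)$, invoke Corollary~\ref{corollary: upprevvovk is supermartingale} for the supermartingale property, and apply Theorem~\ref{Theorem: Levy} to get $\liminf \geq_s f$ strictly almost surely, so that this process witnesses attainment of the infimum.
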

\begin{proof}
Consider any $f\in\setofextvariablesb$ and the extended real process $\process$ defined by $\process(t) \coloneqq\upprevvovkk(f \vert t)$ for all $t\in\situations$.
Then $\process$ is a supermartingale because of Corollary~\ref{corollary: upprevvovk is supermartingale}.
Moreover, because of Theorem \ref{Theorem: Levy} we have that $\liminf\process \geq_s f$ strictly almost surely.
Since $\process(s)=\upprevvovkk(f \vert s)$, this concludes the proof.
\end{proof}

\section{Continuity of $\upprevvovkk$}\label{Sect: Continuity}
Our definition of an upper expectation, characterised by axioms \ref{coherence: const is const}--\ref{coherence: monotonicity}, implies that, if the state space is finite, it satisfies continuity with respect to non-decreasing sequences that are bounded below (see Proposition~\ref{Prop: local continuity wrt non-decreasing seq}).
Hence, since we assumed the local models $\lupprev{s}$ to be upper expectations on a finite state space, they satisfy this particular continuity.
We now prove that this also holds for the global model $\upprevvovkk$.

% We have always required that, in additon to the coherence axioms \ref{sep coherence 1}-\ref{sep coherence 3}, the local models $\lupprev{s}$ should satisfy \ref{ext coherence continuity}.
% As shown in Section \ref{sect: upprev for extended}, this axiom has a clear link with models using upper envelopes of linear expectations when we consider \emph{extended} real variables.
% Apart from that, it also imposes additional structure that results in stronger mathematical properties.
% The most significant is that the game-theoretic upper expectation $\upprevvovkk$ also satisfies continuity with respect to non-decreasing sequences of extended real variables that are bounded below, just like the local models $\lupprev{s}$.
% We prove this in the theorem below.

\begin{theorem}\label{theorem: upward convergence game}
For any $s\in\situations$ and any non-decreasing sequence $\{f_n\}_{n\in\natz}$ in $\setofextvariablesb$ that converges point-wise to a variable $f\in\setofextvariablesb$, we have that
\begin{equation*}
\upprevvovkk(f \vert s)=\lim_{n\to+\infty} \upprevvovkk(f_n \vert s).
\end{equation*} 
\end{theorem}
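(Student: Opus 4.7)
The easy direction $\lim_{n\to+\infty} \upprevvovkk(f_n \vert s) \leq \upprevvovkk(f \vert s)$ follows immediately from \ref{vovk coherence 4}, since $f_n \leq f$ for all $n$; the sequence $\{\upprevvovkk(f_n \vert s)\}_{n\in\natz}$ is non-decreasing by the same axiom, so the limit exists.

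For the nontrivial direction, the plan is to consider the extended real process $\process$ defined by
\begin{equation*}
\process(t) \coloneqq \sup_{n\in\natz} \upprevvovkk(f_n \vert t) = \lim_{n\to+\infty} \upprevvovkk(f_n \vert t) \text{ for all } t\in\situations,
\end{equation*}
and to show that (i) $\process$ is a supermartingale and (ii) $\liminf \process \geq_s f$ s.a.s.\ within $\Gamma(s)$. Lemma~\ref{lemmaMonotoneConvergenceSAS} then yields $\upprevvovkk(f\vert s) \leq \process(s) = \lim_{n\to+\infty} \upprevvovkk(f_n \vert s)$, as required.

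For (i), write $\process_n(t) \coloneqq \upprevvovkk(f_n \vert t)$. Since $f_n \geq \inf f_0 \in \reals$, axiom \ref{vovk coherence 5} gives $\process_n \geq \inf f_0$, so $\process$ is bounded below. Each $\process_n$ is a supermartingale by Corollary~\ref{corollary: upprevvovk is supermartingale}, and for any fixed $t\in\situations$ the local variables $\process_n(t\andstate)$ form a non-decreasing sequence in $\setofgenextvariablesb(\statespace)$ converging point-wise to $\process(t\andstate)$. Because $\statespace$ is finite, Proposition~\ref{Prop: local continuity wrt non-decreasing seq} applies to $\lupprev{t}$ and yields
\begin{equation*}
\lupprev{t}(\process(t\andstate)) = \lim_{n\to+\infty} \lupprev{t}(\process_n(t\andstate)) \leq \lim_{n\to+\infty} \process_n(t) = \process(t),
\end{equation*}
establishing the supermartingale property.

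For (ii), L\'evy's zero-one law (Theorem~\ref{Theorem: Levy}) applied for each $n$ gives that the event
\begin{equation*}
A_n \coloneqq \Big\{\omega\in\Gamma(s) \colon \liminf_{k\to+\infty} \upprevvovkk(f_n \vert \omega^k) \geq f_n(\omega)\Big\}
\end{equation*}
is s.a.s.\ within $\Gamma(s)$: for each $n$, there is an $s$-test supermartingale $\martingale_n^\ast$ that converges to $+\infty$ on $\Gamma(s)\setminus A_n$. By Lemma~\ref{lemma:positive:countable:linear:combination}, $\martingale^\ast \coloneqq \sum_{n\in\nats} 2^{-n} \martingale_n^\ast$ is again a non-negative supermartingale with $\martingale^\ast(s) = 1$, and since each $\martingale_n^\ast$ is non-negative, $\martingale^\ast$ diverges to $+\infty$ on $\bigcup_n (\Gamma(s)\setminus A_n) = \Gamma(s)\setminus \bigcap_n A_n$; hence $\bigcap_n A_n$ is s.a.s.\ within $\Gamma(s)$. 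For any $\omega\in\bigcap_n A_n$, the inequality $\process(\omega^k) \geq \upprevvovkk(f_n \vert \omega^k)$ gives $\liminf_k \process(\omega^k) \geq f_n(\omega)$ for every $n$, and taking the supremum over $n$ yields $\liminf_k \process(\omega^k) \geq \sup_n f_n(\omega) = f(\omega)$.

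The main obstacle is step (i): the supermartingale character of $\process$ is exactly where finiteness of $\statespace$ enters through the local continuity property \ref{ext coherence continuity}. The remaining ingredients---Corollary~\ref{corollary: upprevvovk is supermartingale}, L\'evy's law, and the standard trick of forming a convex combination of test supermartingales to show that countable intersections of s.a.s.\ events remain s.a.s.---are then assembled in a routine way.
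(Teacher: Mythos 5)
Your proposal is correct and follows essentially the same route as the paper's proof: reduce to showing that the process $\process(t)=\lim_n\upprevvovkk(f_n\vert t)$ is a supermartingale (via Corollary~\ref{corollary: upprevvovk is supermartingale} and the local continuity property~\ref{ext coherence continuity}, which is where finiteness of $\statespace$ enters) with $\liminf\process\geq_s f$ strictly almost surely (via L\'evy's zero-one law and a countable convex combination of test supermartingales), and then invoke Lemma~\ref{lemmaMonotoneConvergenceSAS}. Your handling of the s.a.s.\ step, arguing directly that $\liminf_k\process(\omega^k)\geq\sup_n f_n(\omega)=f(\omega)$ on $\bigcap_n A_n$, is a slightly more streamlined phrasing of the same argument the paper gives via the interchange $\sup_n\liminf_m\leq\liminf_m\sup_n$.
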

\begin{proof}
The idea of this proof goes back to \cite[Theorem 6.6]{ShaferVovk2014ITIP}.
% We shall prove the theorem for $s=\Box$ in order not to overload the proof with unnecessary notation.
% The proof is trivially extended towards any $s\in\situations$.
As $f_0\in\setofextvariablesb$ is bounded below and the sequence $\{f_n\}_{n\in\natz}$ is non-decreasing, there is an $M\in\reals{}$ such that $f_n \geq M$ for all $n\in\natz$ and therefore, $f$ is also bounded below by $M$. 
Hence, since $\upprevvovkk$ is constant additive [\ref{vovk coherence 6}], we can assume without loss of generality that $f$ and all $f_n$ are non-negative.

% As $\upprevvovkk$ is monotone \ref{coherence 4}* and $\{ f_n \}_{n\in\natz}$ is a non-decreasing sequence, $\{ \upprevvovkk(f_n) \}_{n\in\natz}$ is a non-decreasing sequence of extended reals and therefore
% \begin{equation*}
% 	\sup_{n\in\natz} \upprevvovkk(f_n)=\lim_{n\to+\infty} \upprevvovkk(f_n).
% \end{equation*} 
That $\lim_{n\to+\infty} \upprevvovkk(f_n \vert s)$ exists, follows from the non-decreasing character of $\{f_n\}_{n\in\natz}$ and \ref{vovk coherence 4}.
Moreover, we have that $\upprevvovkk(f \vert s) \geq \lim_{n\to+\infty} \upprevvovkk(f_n \vert s)$ because $f \geq f_n$ [since $\{f_n\}_{n\in\natz}$ is non-decreasing] and because $\upprevvovkk$ satisfies~\ref{vovk coherence 4}.
It remains to prove the converse inequality.

For any $n\in\natz$, consider the extended real process $S_n$, defined by $S_n(t) \coloneqq\upprevvovkk(f_n \vert t)$ for all $t\in\situations$ and the extended real process $S$ defined by the limit $S(t) \coloneqq\lim_{n\to+\infty}S_n(t)$ for all $t\in\situations$.
This limit exists because $\{S_n(t)\}_{n\in\natz}$ is a non-decreasing sequence for all $t\in\situations$, due to the monotonicity [\ref{vovk coherence 4}] of $\upprevvovkk$.
As $f_n$ is non-negative for all $n\in\natz$, $S_n$ is non-negative for all $n\in\natz$ because of \ref{vovk coherence 5} and therefore $S$ is also non-negative. 
As a result, $S$ and all $S_n$ are non-negative extended real processes.

It now suffices to prove that $S$ is a supermartingale such that $\liminf S \geq_s f$ s.a.s. because it will then follow from Lemma~\ref{lemmaMonotoneConvergenceSAS} that
\begin{align*}
	\upprevvovkk(f \vert s) 
	&=\inf \Big\{ \martingale(s) :  \martingale\in\setofextsupmartb \text{ and } \liminf\martingale \geq_s f \text{ s.a.s.} \Big\} \\
	&\leq S(s)=\lim_{n\to+\infty}\upprevvovkk(f_n \vert s).
\end{align*}
This is what we now set out to do.

We first show that $S$ is a supermartingale.
$S$ is bounded below because it is non-negative.
Furthermore, for all situations $t\in\situations$, we already know that 
$\{S_n(t\andstate)\}_{n\in\natz}$ is a non-decreasing sequence that converges to $S(t\andstate)$.
Since $S_n$ and $S$ are non-negative, we also have that $S_n(t\andstate), S(t\andstate)\in\setofgenextvariablesb(\statespace)$.
Then, according to \ref{ext coherence continuity} we have that 
\begin{align}\label{Eq: upward monotone game}
\lupprev{t}(S(t\andstate))=\lim_{n\to+\infty} \lupprev{t}(S_n(t\andstate)) \text{ for all } t\in\situations. 
\end{align}
$S_n$ is a supermartingale for all $n\in\natz$ because of Corollary~\ref{corollary: upprevvovk is supermartingale}, so it follows that $\lupprev{s}(S_n(t\andstate)) \leq S_n(t)$ for all $n\in\natz$ and all $t\in\situations$.
This implies, together with Equation \eqref{Eq: upward monotone game}, that
\begin{align*}
 \lupprev{t}(S(t\andstate))\leq\lim_{n\to+\infty} S_n(t)=S(t) \text{ for all } t\in\situations. 
\end{align*}
Hence, $S$ is a supermartingale.

To prove that $\liminf S \geq_s f$ s.a.s., we will use L\'evy's zero-one law.
It follows from Theorem~\ref{Theorem: Levy} that, for all $n\in\natz$, there is an $s$-test supermartingale $\martingale_n$ that converges to $+\infty$ on the event 
\begin{align*} \label{eq: proof upward convergence eq1}
	A_n \coloneqq\Big\{ \omega\in\Gamma(s) \colon \liminf_{m \to +\infty} \upprevvovkk(f_n \vert \omega^m) &< f_n(\omega) \Big\}.
\end{align*}
Now, consider the extended real process $\martingale$, defined by
\begin{equation*}
	\martingale(t) \coloneqq\sum_{n\in\nats} \lambda_n \martingale_n(t) \text{ for all } t\in\situations,
\end{equation*}
where the coefficients $\lambda_n > 0$ sum to $1$.
Then it follows from Lemma~\ref{lemma:positive:countable:linear:combination} that $\martingale$ is again a non-negative supermartingale.
Moreover, it is clear that $\martingale(s)=1$ and hence, $\martingale$ is an $s$-test supermartingale.

% We show that $\martingale$ is also a test supermartingale.
% It is clear that $\martingale(\Box)=1$ and $\martingale$ is non-negative because, for all $n\in\natz$, $\martingale_n$ is non-negative and $\lambda_n$ is positive.
% Moreover,
% \begin{align*}
% \lupprev{\situation{1}{k}}(\martingale(\situation{1}{k} \cdot)) 
% &= \lupprev{\situation{1}{k}}(\sum_{n\in\nats} \lambda_n \martingale_n(\situation{1}{k} \cdot)) \\
% &\leq \sum_{n\in\nats} \lupprev{\situation{1}{k}}( \lambda_n \martingale_n(\situation{1}{k} \cdot)) \\
% &= \sum_{n\in\nats} \lambda_n \lupprev{\situation{1}{k}}(\martingale_n(\situation{1}{k} \cdot)) \\
% &\leq \sum_{n\in\nats} \lambda_n \martingale_n(\situation{1}{k})=\martingale(\situation{1}{k}),
% \end{align*}
% for all situations $\situation{1}{k}\in\situations$, where the second step follows from Corollary~\ref{corollary: countable subadditivity for upper exp} --- which we can apply because $\{ \lambda_n \martingale_n(\situation{1}{k} \cdot) \}_{n\in\nats}$ is non-negative --- the third from non-negative homogeneity \ref{ext coherence 3} and the fourth from $\martingale_n$ being a supermartingale for all $n\in\natz$.
% Hence, $\martingale$ is a supermartingale and therefore a test supermartingale.

We show that $\martingale$ converges to $+\infty$ on all paths $\omega\in\Gamma(s)$ such that $\liminf_{m \to +\infty} S(\omega^m) < f (\omega)$.
Clearly, $\martingale$ converges to $+\infty$ on $\cup_{n\in\natz} A_n \eqqcolon A$.
Consider now any path $\omega\in\Gamma(s)$ for which $\liminf_{m \to +\infty} S(\omega^m) < f (\omega)$.
Since, as we explained before, $S_n(t)$ is non-decreasing in $n$ for all $t\in\situations$, we have that, for all $m\in\natz$, $\sup_{n\in\natz} S_n(\omega^m)=\lim_{n\to+\infty} S_n(\omega^m)=S(\omega^m)$. Since $\liminf_{m \to +\infty} S(\omega^m) < f (\omega)$, this implies that
\begin{align*}
 \liminf_{m \to +\infty} \sup_{n\in\natz} S_n(\omega^m) 
 < \lim_{n\to+\infty} f_n(\omega).
\end{align*}
Since also $\sup_{n\in\natz} \liminf_{m \to +\infty} S_n(\omega^m)\leq\liminf_{m \to +\infty} \sup_{n\in\natz} S_n(\omega^m)$ [because we obviously have that $S_n(\omega^m)\leq \sup_{n\in\natz} S_n(\omega^m)$ for all $n,m \in\natz$], we infer that
\begin{align}\label{eq: proof upward convergence eq2}
	\sup_{n\in\natz} \liminf_{m \to +\infty} \upprevvovkk(f_n \vert \omega^m) &< \lim_{n\to+\infty} f_n(\omega).
\end{align}
Then there is some $n_\omega$ such that
\begin{equation*}
\sup_{n\in\natz} \liminf_{m \to +\infty} \upprevvovkk(f_n \vert \omega^m) <  f_{n_\omega}(\omega),
\end{equation*}
and therefore, we see that also
\begin{equation*}
\liminf_{m \to +\infty} \upprevvovkk(f_{n_\omega} \vert \omega^m) <  f_{n_\omega}(\omega).
\end{equation*}
So $\omega\in A_{n_\omega} \subseteq A$ and, as a consequence, $\martingale$ converges to $+\infty$ on $\omega$.
Hence, the $s$-test supermartingale $\martingale$ converges to $+\infty$ on all paths $\omega\in\Gamma(s)$ such that $\liminf_{m \to +\infty} S(\omega^m) < f (\omega)$, and therefore $\liminf S \geq_s f$ strictly almost surely.
\end{proof}

\begin{corollary}\label{corollary: homog wrt +infty}
For any situation $s\in\situations$ and any non-negative $f\in\setofextvariables$, we have that
$\upprevvovkk((+\infty) f \vert s)=(+\infty) \upprevvovkk(f \vert s)$
\end{corollary}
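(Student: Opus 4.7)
The plan is to obtain this as a direct consequence of the upward continuity theorem (Theorem~\ref{theorem: upward convergence game}) applied to the natural approximating sequence $\{n f\}_{n \in \natz}$.

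First I would check that this sequence is admissible for Theorem~\ref{theorem: upward convergence game}: since $f$ is non-negative, each $n f$ is a non-negative variable in $\setofextvariablesb$, the sequence is non-decreasing (because $n f(\omega) \leq (n+1) f(\omega)$ whenever $f(\omega) \geq 0$), and, using the conventions that $0 \cdot (+\infty) = 0$ and $n \cdot c \to +\infty$ for $c > 0$, it converges point-wise to $(+\infty) f$, which is itself non-negative and hence belongs to $\setofextvariablesb$. Theorem~\ref{theorem: upward convergence game} then yields
\begin{equation*}
\upprevvovkk((+\infty) f \vert s) = \lim_{n \to +\infty} \upprevvovkk(n f \vert s).
\end{equation*}

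Next I would invoke \ref{vovk coherence 3} (homogeneity for $\lambda \in \nnegreals{}$), which gives $\upprevvovkk(n f \vert s) = n \, \upprevvovkk(f \vert s)$ for every $n \in \natz{}$. Note that, by \ref{vovk coherence 5} and the non-negativity of $f$, we have $\upprevvovkk(f \vert s) \geq 0$, so this product is well-defined.

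Finally I would split into two cases. If $\upprevvovkk(f \vert s) = 0$, then $n \upprevvovkk(f \vert s) = 0$ for all $n$, so the limit equals $0 = (+\infty) \cdot 0 = (+\infty) \upprevvovkk(f \vert s)$ by our conventions. If $\upprevvovkk(f \vert s) > 0$ (possibly $+\infty$), then $n \upprevvovkk(f \vert s) \to +\infty = (+\infty) \upprevvovkk(f \vert s)$. In both cases the desired equality follows. Since the heavy lifting has been done by Theorem~\ref{theorem: upward convergence game}, there is no real obstacle; the only thing to be careful about is the case analysis imposed by the arithmetic conventions on $\extreals{}$.
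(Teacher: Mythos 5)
Your proposal is correct and follows essentially the same route as the paper: the paper also applies Theorem~\ref{theorem: upward convergence game} to a non-decreasing sequence $\{\lambda_n f\}_{n\in\natz}$ with $\lambda_n\to+\infty$ (you simply take $\lambda_n=n$) and then uses \ref{vovk coherence 3} together with the arithmetic conventions on $\extreals$ to identify the limit with $(+\infty)\upprevvovkk(f\vert s)$. Your explicit case split on whether $\upprevvovkk(f\vert s)$ is zero or positive just spells out what the paper leaves implicit.
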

\begin{proof}
It follows from \ref{vovk coherence 3} that $\upprevvovkk(\lambda f \vert s)=\lambda \upprevvovkk(f \vert s)$ for any real $\lambda > 0$.
Hence, if we let $\{\lambda_n\}_{n\in\natz}$ be a non-decreasing sequence of positive reals that converges to $+\infty$, we have that 
\begin{align}\label{Eq: corollary: homog wrt +infty}
\lim_{n\to+\infty} \upprevvovkk(\lambda_n f \vert s)=\lim_{n\to+\infty} \lambda_n \upprevvovkk(f \vert s)=(+\infty) \upprevvovkk(f \vert s).
\end{align}
Moreover, it is clear that $\{\lambda_n f \}_{n\in\natz}$ is a non-decreasing sequence of variables in $\setofextvariablesb$ that converges to $(+\infty) f$, which implies by Theorem \ref{theorem: upward convergence game} that the left hand side of Equation \eqref{Eq: corollary: homog wrt +infty} is equal to $\upprevvovkk((+\infty) f \vert s)$.
\end{proof}

\begin{corollary}\label{corollary: Vovk is an upper expectation}
For any $s\in\situations$, the map $\upprevvovkk(\cdot \vert s) \colon \setofextvariables \to \extreals$ satisfies~\ref{coherence: const is const}--\ref{coherence: monotonicity} on $\setofextvariablesb$.
\end{corollary}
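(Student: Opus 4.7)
The plan is to verify each of the four axioms \ref{coherence: const is const}--\ref{coherence: monotonicity} directly from the properties \ref{vovk coherence 1}--\ref{vovk coherence 6} of $\upprevvovkk$ established earlier, together with Corollary~\ref{corollary: homog wrt +infty}. Since all the substantive work has already been done (in particular, Theorem~\ref{theorem: upward convergence game} which was needed to handle the $\lambda = +\infty$ case), this corollary is a straightforward bookkeeping exercise; the only thing that really needs care is distinguishing the real and infinite scalars in \ref{coherence: homog for ext lambda}.

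First I would verify \ref{coherence: const is const}. Applying \ref{vovk coherence 5} to the constant $0$ yields $\upprevvovkk(0 \vert s) = 0$, and then \ref{vovk coherence 6} with $\mu = c$ gives $\upprevvovkk(c \vert s) = c$ for every real $c$. Axiom \ref{coherence: sublinearity} is precisely the statement of \ref{vovk coherence 2}, and axiom \ref{coherence: monotonicity} follows from \ref{vovk coherence 4}, since $f \leq g$ globally implies $f \leq_s g$.

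For \ref{coherence: homog for ext lambda}, I would split on the scalar. For real $\lambda \in \posreals$ and non-negative $f \in \setofextvariablesb$, the identity $\upprevvovkk(\lambda f \vert s) = \lambda \upprevvovkk(f \vert s)$ is exactly \ref{vovk coherence 3}. For $\lambda = +\infty$, this is the content of Corollary~\ref{corollary: homog wrt +infty}, whose proof already leveraged Theorem~\ref{theorem: upward convergence game} to pass a non-decreasing sequence $\lambda_n \to +\infty$ through $\upprevvovkk$. Putting these two cases together covers the entire domain $\reals_{>} \cup \{+\infty\}$ prescribed by \ref{coherence: homog for ext lambda}.

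There is no real obstacle here — the corollary essentially just repackages \ref{vovk coherence 1}--\ref{vovk coherence 6} and Corollary~\ref{corollary: homog wrt +infty} in the language of the axioms E1--E4, restricted to $\setofextvariablesb$. The only subtlety worth flagging explicitly is that the axioms \ref{coherence: const is const}--\ref{coherence: monotonicity} are required only on $\setofextvariablesb$, so issues associated with the convention $+\infty - \infty = +\infty$ do not arise when verifying \ref{coherence: sublinearity} and \ref{coherence: monotonicity} (both $f$ and $g$ are bounded below, so $f+g$ is well-defined and bounded below without needing the convention).
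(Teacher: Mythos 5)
Your proposal is correct and follows essentially the same route as the paper: \ref{coherence: const is const} from \ref{vovk coherence 5} (the paper reads it off directly for any constant, while you take the minor detour through $0$ and \ref{vovk coherence 6}), \ref{coherence: sublinearity} and \ref{coherence: monotonicity} from \ref{vovk coherence 2} and \ref{vovk coherence 4}, and \ref{coherence: homog for ext lambda} from \ref{vovk coherence 3} for real $\lambda$ together with Corollary~\ref{corollary: homog wrt +infty} for $\lambda=+\infty$.
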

\begin{proof}
 \ref{coherence: const is const} follows from \ref{vovk coherence 5}.
\ref{coherence: sublinearity} and \ref{coherence: monotonicity} respectively follow from \ref{vovk coherence 2} and \ref{vovk coherence 4}.
\ref{coherence: homog for ext lambda} follows from \ref{vovk coherence 3} for real $\lambda$, and from Corollary~\ref{corollary: homog wrt +infty} for $\lambda=+\infty$.
\end{proof}

\begin{lemma}[Fatou's Lemma]\label{lemma: Fatou general}
For any situation $s\in\situations$ and any sequence $\{f_n\}_{n\in\natz}$ in $\setofextvariablesb$ that is uniformly bounded below, we have that
\begin{align*}
\upprevvovkk(f \vert s)\leq\liminf_{n\to+\infty} \upprevvovkk(f_n \vert s) \text{ where } f \coloneqq\liminf_{n\to+\infty} f_n.
\end{align*}
\end{lemma}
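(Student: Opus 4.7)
The plan is to reduce this to the upward continuity result (Theorem \ref{theorem: upward convergence game}) via the usual Fatou trick: replace the sequence $\{f_n\}_{n\in\natz}$ by the non-decreasing sequence of tail infima $\{g_n\}_{n\in\natz}$ defined by $g_n \coloneqq \inf_{k \geq n} f_k$. Since $\{f_n\}_{n\in\natz}$ is uniformly bounded below by some $M \in \reals{}$, each $g_n$ is also bounded below by $M$, so $g_n \in \setofextvariablesb{}$. It is standard that $\{g_n\}_{n\in\natz}$ is non-decreasing and converges point-wise to $f = \liminf_{n \to +\infty} f_n$, with $f$ itself bounded below by $M$ and hence in $\setofextvariablesb{}$.

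Next I would invoke Theorem \ref{theorem: upward convergence game} on the non-decreasing sequence $\{g_n\}_{n\in\natz}$ to obtain
\begin{equation*}
\upprevvovkk(f \vert s) = \lim_{n \to +\infty} \upprevvovkk(g_n \vert s).
\end{equation*}
For each $n \in \natz$ and each $k \geq n$, we have $g_n \leq f_k$ pointwise, so by monotonicity \ref{vovk coherence 4} of $\upprevvovkk$ (applied in situation $s$),
\begin{equation*}
\upprevvovkk(g_n \vert s) \leq \upprevvovkk(f_k \vert s) \quad \text{for all } k \geq n,
\end{equation*}
which yields $\upprevvovkk(g_n \vert s) \leq \inf_{k \geq n} \upprevvovkk(f_k \vert s)$.

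Finally, taking the limit in $n$ on both sides and recalling the definition of $\liminf$, we arrive at
\begin{equation*}
\upprevvovkk(f \vert s) = \lim_{n \to +\infty} \upprevvovkk(g_n \vert s) \leq \lim_{n \to +\infty} \inf_{k \geq n} \upprevvovkk(f_k \vert s) = \liminf_{n \to +\infty} \upprevvovkk(f_n \vert s),
\end{equation*}
which is the desired inequality. There is no real obstacle here beyond checking that the tail-infimum sequence genuinely stays in $\setofextvariablesb{}$ (so that Theorem \ref{theorem: upward convergence game} applies), which is immediate from the uniform lower bound hypothesis; once that is in place the proof is simply a combination of upward continuity and monotonicity.
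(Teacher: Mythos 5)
Your proposal is correct and follows essentially the same route as the paper's own proof: define the tail infima $g_n \coloneqq \inf_{k \geq n} f_k$, apply Theorem~\ref{theorem: upward convergence game} to this non-decreasing sequence in $\setofextvariablesb$, and conclude by monotonicity \ref{vovk coherence 4}. The only cosmetic difference is that the paper compares $\upprevvovkk(g_k \vert s)$ directly with $\upprevvovkk(f_k \vert s)$ and takes the limit inferior, whereas you first pass to $\inf_{k \geq n} \upprevvovkk(f_k \vert s)$; both are the same argument.
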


\begin{proof}
Consider the variable $g_k$ defined by $g_k(\omega) \coloneqq\inf_{n \geq k}{f_n}(\omega)$ for any $k\in\natz$ and all $\omega\in\Omega$.
Then clearly $f=\lim_{k \to +\infty}{g_k}$. 
Furthermore, $\{g_k\}_{k\in\natz}$ is a non-decreasing sequence in $\setofextvariablesb$ because $\{f_n\}_{n\in\natz}$ is uniformly bounded below.
Hence, we can use Theorem \ref{theorem: upward convergence game} to find that
\begin{equation*}
\upprevvovkk(f \vert s)=\lim_{k \to +\infty} \upprevvovkk(g_k \vert s)=\liminf_{k \to +\infty} \upprevvovkk(g_k \vert s) \leq  \liminf_{k \to +\infty} \upprevvovkk(f_k \vert s),
\end{equation*}
where the inequality follows because, for all $k\in\natz$, $g_k \leq f_k$ and therefore, because of \ref{vovk coherence 4}, also $\upprevvovkk(g_k \vert s)\leq\upprevvovkk(f_k \vert s)$.
\end{proof}

\begin{theorem}
\label{Lemma Continuity w.r.t. lower cuts}
Consider any $s\in\situations$, any $f\in\setofextvariables$ and, for every $\alpha\in\reals{}$, the variable $f_\alpha\in\setofextvariables$ defined by 
$f_\alpha(\omega) \coloneqq\max\{f(\omega),\alpha\} \text{ for all } \omega\in\Omega$.
Then  
\begin{align*}
\lim_{\alpha \to -\infty} \upprevvovkk(f_\alpha \vert s)=\upprevvovkk(f \vert s).
\end{align*}
\end{theorem}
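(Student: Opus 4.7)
My plan is to establish the two inequalities separately, relying only on the monotonicity property \ref{vovk coherence 4} and on the fact that every admissible supermartingale is bounded below. First, for each $\alpha\in\reals{}$, the variable $f_\alpha$ is bounded below by $\alpha$, so $f_\alpha\in\setofextvariablesb{}$. Moreover, whenever $\alpha\leq\alpha'$, we have $f\leq f_\alpha\leq f_{\alpha'}$ pointwise, which by \ref{vovk coherence 4} yields that $\alpha\mapsto\upprevvovkk(f_\alpha\vert s)$ is non-decreasing in $\alpha$ and dominates $\upprevvovkk(f\vert s)$. Hence $\lim_{\alpha\to-\infty}\upprevvovkk(f_\alpha\vert s)$ exists in $\extreals$ and satisfies $\lim_{\alpha\to-\infty}\upprevvovkk(f_\alpha\vert s)\geq\upprevvovkk(f\vert s)$.

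For the converse inequality --- the substantive part --- I will assume $\upprevvovkk(f\vert s)<+\infty$ (the other case is trivial) and fix any real $\beta>\upprevvovkk(f\vert s)$. By Definition~\ref{def:upperexpectation2}, there is some $\martingale\in\setofextsupmartb{}$ with $\martingale(s)\leq\beta$ and $\liminf\martingale\geq_s f$. The key observation is that because $\martingale$ is bounded below by some $M\in\reals{}$, we automatically have $\liminf\martingale\geq M$ everywhere, and therefore $\liminf\martingale\geq_s\max\{f,M\}$. For any $\alpha\leq M$, the pointwise inequality $f_\alpha=\max\{f,\alpha\}\leq\max\{f,M\}$ then yields $\liminf\martingale\geq_s f_\alpha$, so the very same $\martingale$ witnesses $\upprevvovkk(f_\alpha\vert s)\leq\martingale(s)\leq\beta$ via Definition~\ref{def:upperexpectation2}. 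Letting first $\alpha\to-\infty$ and then $\beta$ decrease to $\upprevvovkk(f\vert s)$ gives $\lim_{\alpha\to-\infty}\upprevvovkk(f_\alpha\vert s)\leq\upprevvovkk(f\vert s)$. The case $\upprevvovkk(f\vert s)=-\infty$ is covered by the same argument, since $\beta$ can be chosen arbitrarily negative, forcing the limit to be $-\infty$ as well.

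I do not expect a real obstacle here: the bounded-belowness of every admissible supermartingale supplies a truncation level $M$ for free, so --- in stark contrast to Theorem~\ref{theorem: upward convergence game} --- no new supermartingale has to be constructed and no appeal to L\'evy's zero-one law (Theorem~\ref{Theorem: Levy}) is necessary. The entire argument is a direct manipulation of the defining infimum in Definition~\ref{def:upperexpectation2} together with \ref{vovk coherence 4}.
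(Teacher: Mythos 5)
Your proposal is correct and follows essentially the same route as the paper's own proof: both establish the easy inequality via monotonicity \ref{vovk coherence 4}, and for the converse both take a single supermartingale $\martingale$ witnessing $\upprevvovkk(f\vert s)$ up to $\beta$ and observe that its bounded-belowness makes $\liminf\martingale\geq_s f_\alpha$ hold automatically for all sufficiently negative $\alpha$ (your truncation level $M$ plays exactly the role of the paper's $\alpha^\ast$). No gaps; nothing further to add.
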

\begin{proof}
% Again, we shall assume that $s=\Box$ for simplicity.
$\upprevvovkk(f_\alpha \vert s)$ is non-decreasing in $\alpha$ because $f_\alpha$ is non-decreasing in $\alpha$ and because $\upprevvovkk$ is monotone [\ref{vovk coherence 4}], and therefore $\lim_{\alpha \to -\infty} \upprevvovkk(f_\alpha \vert s)$ exists.
Moreover, $f_\alpha \geq f$ for all $\alpha\in\reals{}$, implying, by the monotonicity [\ref{vovk coherence 4}] of $\upprevvovkk$, that $\lim_{\alpha \to -\infty} \upprevvovkk(f_\alpha \vert s) \geq \upprevvovkk(f \vert s)$.
It therefore only remains to prove the converse inequality.

If $\upprevvovkk(f \vert s)=+\infty$, then $\lim_{\alpha \to -\infty} \upprevvovkk(f_\alpha \vert s)\leq\upprevvovkk(f \vert s)$ holds trivially.
If $\upprevvovkk(f \vert s) < +\infty$, fix any real $c > \upprevvovkk(f \vert s)$.
Then it follows from the definition of $\upprevvovkk(f \vert s)$ that there is some supermartingale $\martingale\in\setofextsupmartb$ such that $\martingale(s) \leq c$ and $\liminf\martingale \geq_s f$.
Since $\martingale$ is bounded below, it immediately follows that there is some $\alpha^\ast\in\reals{}$ such that $\liminf\martingale \geq \alpha^\ast$, and hence also $\liminf\martingale \geq \alpha$ for all $\alpha\leq\alpha^\ast$.
Fix any such $\alpha\leq\alpha^\ast$.
Then it is clear that moreover $\liminf\martingale \geq_s f_\alpha$ and it therefore follows from the definition of $\upprevvovkk(f_\alpha \vert s)$ that $\upprevvovkk(f_\alpha \vert s)\leq\martingale(s) \leq c$.
Consequently, we also have that $\lim_{\alpha \to -\infty} \upprevvovkk(f_\alpha \vert s) \leq c$, and since this holds for any $c > \upprevvovkk(f \vert s)$, we conclude that indeed $\lim_{\alpha \to -\infty} \upprevvovkk(f_\alpha \vert s)\leq\upprevvovkk(f \vert s)$.
\end{proof}

\section{Continuity of $\upprevvovkk$ with respect to $n$-measurable variables}\label{Sect: Continuity wrt n-measurables}
% For any $f\in\setofgenextvariables(\mathscr{Y})$ and any $\alpha, \beta\in\extreals$ such that $\alpha\leq\beta$ we use the respective notations $f^\alpha$ and $f^{\alpha,\beta}$ to denote the extended real variables on $\mathscr{Y}$ defined by 
% \begin{align*}
% f^\alpha(y) &\coloneqq
% \begin{cases}
% \alpha \text{ if } f(y) < \alpha; \\
% f(y) \text{ otherwise,}
% \end{cases}
% \text{for all } y\in\mathscr{Y} \text{ and,} \\
% f^{\alpha,\beta}(y) &\coloneqq 
% \begin{cases}
% \alpha \text{ if } f(y) < \alpha; \\
% f(y) \text{ if } \alpha \leq f(y)\leq\beta; \\
% \beta \text{ if } f(y) > \beta,
% \end{cases}
% \text{for all } y\in\mathscr{Y}. 
% \end{align*} 
\begin{lemma}\label{Lemma: convergence of n-measurables}
Consider any global variable $h\in\setofextvariablesb$ taking values in the natural numbers.
If $h(\omega)=h(\tilde{\omega})$ for any $\omega\in\samplespace$ and any $\tilde{\omega}\in\Gamma(\omega^{h(\omega)})$, we have that $\sup h \coloneqq\sup_{\omega\in\samplespace} h(\omega)$ is real.
\end{lemma}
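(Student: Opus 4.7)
The plan is to argue by contradiction, exhibiting via König's lemma a path $\omega^\ast$ whose existence forces some other path $\tilde\omega$ to violate the compatibility hypothesis on $h$.

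First I would set, for each $n\in\natz$,
\[
S_n \coloneqq \{\omega^n : \omega\in\samplespace \text{ and } h(\omega) > n\} \subseteq \statespace^n,
\]
and $T \coloneqq \bigcup_{n\in\natz} S_n$. Each $S_n$ is finite since $\statespace$ is, and the family $\{S_n\}_{n\in\natz}$ is prefix-closed: if $\omega^{n+1}\in S_{n+1}$ (so $h(\omega) > n+1$), then trivially $h(\omega) > n$, so $\omega^n \in S_n$. Now I would suppose, \emph{ex absurdo}, that $\sup h = +\infty$. Then $S_n$ is non-empty for every $n\in\natz$, so $T$ is an infinite, prefix-closed, finitely branching subtree of $\situations$.

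By König's lemma, $T$ then contains an infinite branch: there is a path $\omega^\ast\in\samplespace$ with $\omega^{\ast,n}\in S_n$ for every $n\in\natz$. Writing $n^\ast \coloneqq h(\omega^\ast) \in \nats$, the inclusion $\omega^{\ast,n^\ast}\in S_{n^\ast}$ supplies some $\tilde\omega\in\samplespace$ with $\tilde\omega^{n^\ast} = \omega^{\ast,n^\ast}$ and $h(\tilde\omega) > n^\ast$. But $\tilde\omega \in \Gamma(\omega^{\ast,n^\ast}) = \Gamma(\omega^{\ast,h(\omega^\ast)})$, so the hypothesis on $h$ (applied with $\omega=\omega^\ast$) forces $h(\tilde\omega) = h(\omega^\ast) = n^\ast$, contradicting $h(\tilde\omega) > n^\ast$. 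Hence $\sup h$ is finite, and since $h\geq 1$ it is real.

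The only non-routine ingredient is the choice of tree: taking the strict inequality $h(\omega) > n$ in the definition of $S_n$ (rather than $h(\omega)=n$) is what makes the family prefix-closed and sets up the contradiction via the hypothesis, since at the stopping height $n^\ast = h(\omega^\ast)$ one needs an \emph{extra} path $\tilde\omega$ sharing the prefix $\omega^{\ast,n^\ast}$ with strictly larger $h$-value. Beyond this, the argument is a standard König's-lemma compactness argument on the finitely branching tree $\statespace^\ast$.
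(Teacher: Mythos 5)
Your proof is correct and is essentially the same argument as the paper's: both derive a contradiction from $\sup h=+\infty$ by producing an infinite branch $\omega^\ast$ along which arbitrarily large $h$-values persist, and then invoking the compatibility hypothesis on the cylinder $\Gamma(\omega^{\ast,h(\omega^\ast)})$ to cap $h$ there at $h(\omega^\ast)$. The only difference is packaging: the paper inlines the König-type construction by greedily choosing, at each level, a successor state over whose cylinder the supremum of $h$ remains $+\infty$, whereas you cite K\H{o}nig's lemma on the explicitly defined prefix-closed tree of strings $\omega^n$ with $h(\omega)>n$.
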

\begin{proof}
Assume \emph{ex absurdo} that $\sup h=+\infty$. 
Then we have that
\begin{equation*}
 \sup_{\omega\in\samplespace} h(\omega)=\sup_{x_{1}\in\statespace} \sup_{\omega\in\Gamma(x_{1})} h(\omega)=+\infty.
\end{equation*}
Since $\statespace$ is finite, there is clearly some $x_{1}^\ast\in\statespace$ for which $\sup_{\omega\in\Gamma(x_{1}^\ast)} h(\omega)=+\infty$. 
Similarly, we also find that 
\begin{equation*}
 \sup_{\omega\in\Gamma(x_{1}^\ast)} h(\omega)=\sup_{x_{2}\in\statespace} \sup_{\omega\in\Gamma(x_{1}^\ast x_{2})} h(\omega)=+\infty.
\end{equation*}
Since $\statespace$ is finite, there is again some $x_{2}^\ast\in\statespace$ for which $\sup_{\omega\in\Gamma(x_{1}^\ast x_{2}^\ast)} h(\omega)=+\infty$.
We can continue in this way and construct a path $\omega_\ast=x_{1}^\ast x_{2}^\ast ... x_{n}^\ast ...$ for which 
\begin{equation*}
	\sup_{\omega\in\Gamma(\omega_\ast^n)} h(\omega)=+\infty \text{ for all } n\in\natz.
\end{equation*}
However, $h$ takes values in the natural numbers, so $h(\omega_\ast)$ is real. 
This implies, together with $h(\omega)=h(\omega_\ast)$ for any $\omega\in\Gamma(\omega_\ast^{h(\omega_\ast)})$, that
\begin{equation*}
	\sup \set*{h(\omega) \colon \omega\in\Gamma\big(\omega_\ast^{h(\omega_\ast)}\big)} 
	= \sup \set*{h(\omega_\ast) \colon \omega\in\Gamma\big(\omega_\ast^{h(\omega_\ast)}\big)}=h(\omega_\ast) < +\infty.
\end{equation*}
This is a contradiction, and therefore $\sup h$ is real.
\end{proof}

\begin{proposition}\label{Prop: cont. wrt non-increasing n-measurables}
For any $s\in\situations$ and any non-increasing sequence $\{f_n\}_{n\in\natz}$ of $n$-measurable gambles that converges point-wise to a variable $f\in\setofextvariables$, we have that
\begin{equation*}
\upprevvovkk(f \vert s)=\lim_{n\to+\infty} \upprevvovkk(f_n \vert s).
\end{equation*} 
\end{proposition}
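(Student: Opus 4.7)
The easy direction, $\upprevvovkk(f \vert s) \leq \alpha \coloneqq \lim_{n\to+\infty} \upprevvovkk(f_n \vert s)$, follows at once from monotonicity \ref{vovk coherence 4} applied to $f \leq f_n$; existence of $\alpha$ in $\extreals$ is guaranteed because $\upprevvovkk(f_n \vert s)$ is non-increasing in $n$. For the reverse inequality, after disposing of the trivial cases $\upprevvovkk(f \vert s) = +\infty$ and $\upprevvovkk(f \vert s) = -\infty$ (the latter yielding $\alpha = -\infty$ by choosing supermartingales $\martingale$ with $\martingale(s)$ arbitrarily negative), I will fix $\epsilon > 0$ and an arbitrary $\martingale \in \setofextsupmartb{}$ with $\liminf \martingale \geq_s f$, and produce some $N \in \nats$ for which $\upprevvovkk(f_N \vert s) \leq \martingale(s) + \epsilon$. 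Since $\alpha \leq \upprevvovkk(f_N \vert s)$, passing to the infimum over $\martingale$ and then letting $\epsilon \downarrow 0$ will then deliver $\alpha \leq \upprevvovkk(f \vert s)$.

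The engine is Lemma~\ref{Lemma: convergence of n-measurables}, which I will invoke on the stopping time
\[
\tau(\omega) \coloneqq \min\set*{n \geq \max\{\vert s \vert, 1\} \colon \martingale(\omega^n) + \epsilon > f_n(\omega)}
\]
for $\omega \in \Gamma(s)$, extended by a fixed constant $\geq \max\{\vert s \vert, 1\}$ off $\Gamma(s)$. The minimum is attained at each $\omega \in \Gamma(s)$: when $f(\omega) \in \reals{}$ this follows from $\liminf_n [\martingale(\omega^n) - f_n(\omega)] \geq \liminf_n \martingale(\omega^n) - f(\omega) \geq 0$, while the case $f(\omega) = -\infty$ uses that $\martingale$ is bounded below whereas $f_n(\omega) \to -\infty$. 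Crucially, both $\martingale(\omega^n)$ and $f_n(\omega) = f_n(\omega^n)$ depend on $\omega$ only through $\omega^n$ --- the latter precisely because $f_n$ is $n$-measurable --- so a routine case-split verifies $\tau(\omega) = \tau(\tilde\omega)$ for every $\tilde\omega \in \Gamma(\omega^{\tau(\omega)})$. Lemma~\ref{Lemma: convergence of n-measurables} therefore yields a uniform bound $\tau \leq N$ for some $N \in \nats$.

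I then stop $\martingale$ at $\tau$ by setting $\tilde\martingale(t) \coloneqq \martingale(t)$ whenever $\tau(\omega) > \vert t \vert$ for every $\omega \in \Gamma(t)$ (an unambiguous condition since finitariness forces $\tau$ to become constant on $\Gamma(t)$ the instant it has triggered), and $\tilde\martingale(t) \coloneqq \martingale(t^{\tau(t)})$ otherwise. A case-by-case check (pre-stopping situations inherit the supermartingale inequality from $\martingale$; post-stopping ones are locally constant, so \ref{coherence: bounds} suffices) shows $\tilde\martingale \in \setofextsupmartb{}$, $\tilde\martingale \geq \inf \martingale$, and $\tilde\martingale(s) = \martingale(s)$ because $\tau \geq \vert s \vert$ on $\Gamma(s)$. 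For $\omega \in \Gamma(s)$ we have $\tilde\martingale(\omega^m) = \martingale(\omega^{\tau(\omega)})$ whenever $m \geq \tau(\omega)$, so
\[
\liminf_m \tilde\martingale(\omega^m) = \martingale(\omega^{\tau(\omega)}) > f_{\tau(\omega)}(\omega) - \epsilon \geq f_N(\omega) - \epsilon,
\]
using the defining inequality of $\tau$ together with $\tau(\omega) \leq N$ and the non-increasing nature of $\{f_n\}$. Hence $\tilde\martingale + \epsilon$ is a supermartingale (by \ref{coherence: const add}) with $\liminf(\tilde\martingale + \epsilon) \geq_s f_N$, and Definition~\ref{def:upperexpectation2} delivers $\upprevvovkk(f_N \vert s) \leq \tilde\martingale(s) + \epsilon = \martingale(s) + \epsilon$, as required. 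The main obstacle is verifying simultaneously that $\tau$ is pointwise finite on $\Gamma(s)$ and finitary in the sense required by Lemma~\ref{Lemma: convergence of n-measurables}; once these are secured the remaining steps (stopping, supermartingale check, and extraction of the infimum bound) are largely mechanical.
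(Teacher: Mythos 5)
Your proof is correct and follows essentially the same route as the paper's: both hinge on defining the first-entry time $\tau$ (the paper's $h$) at which the supermartingale $\epsilon$-dominates the $n$-measurable $f_n$, invoking Lemma~\ref{Lemma: convergence of n-measurables} to get a uniform bound $N$ on $\tau$, stopping $\martingale$ at the resulting complete cut, and using monotonicity of $\{f_n\}$ to pass from $f_{\tau(\omega)}$ to $f_N$. The only differences are cosmetic (a single $\epsilon$ via liminf superadditivity rather than the paper's $2\epsilon$ two-index argument, and taking an infimum over all admissible $\martingale$ rather than fixing $\alpha > \upprevvovkk(f \vert s)$).
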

\begin{proof}
The idea behind the proof of this theorem originates from \cite{DeBock2014Continuity}, where a version of Definition \ref{def:upperexpectation2} with real supermartingales is used.
We here adapt it to our setting.
Fix any $s\in\situations$.
Because $\{f_n\}_{n\in\natz}$ is non-increasing and $\upprevvovkk$ is monotone [\ref{vovk coherence 4}], $\upprevvovkk(f_n \vert s)$ is also non-increasing in $n$ and hence the limit $\lim_{n\to+\infty} \upprevvovkk(f_n \vert s)$ exists.
Moreover, because $f_n \geq f$ for all $n\in\natz$ and again $\upprevvovkk$ is monotone [\ref{vovk coherence 4}], we have that $\lim_{n\to+\infty} \upprevvovkk(f_n \vert s) \geq \upprevvovkk(f \vert s)$.
Hence, we are left to show the other inequality.
If $\upprevvovkk(f \vert s)=+\infty$, the inequality trivially holds.
If not, we will prove that, for all $\alpha\in\reals{}$ such that $\upprevvovkk(f \vert s) < \alpha$,  $\lim_{n\to+\infty} \upprevvovkk(f_n \vert s)\leq\alpha$.

So consider any $\alpha\in\reals{}$ such that $\upprevvovkk(f \vert s) < \alpha$.
Then there is a supermartingale $\martingale\in\setofextsupmartb{}$ such that $\martingale(s)\leq\alpha$ and $\liminf\martingale \geq_s f$.
Fix any $\epsilon > 0$ and any $\omega\in\Gamma(s)$.
Note that $f$ is bounded above because $\{f_n\}_{n\in\natz}$ is a non-increasing sequence of gambles.
Hence, because $\{f_n\}_{n\in\natz}$ converges point-wise to $f$, there is, for every real $\beta > f(\omega)$, an index $n'$ such that $\beta \geq f_n(\omega)$ for all $n \geq n'$.
So if $\liminf\martingale(\omega)$ is real, there is an index $M(\omega)\in\natz$ such that $\liminf\martingale(\omega) + \epsilon \geq f_n(\omega)$ for all $n \geq M(\omega)$.
Moreover, because $\liminf\martingale(\omega)$ is real, there exists a second index $N(\omega)\in\natz$, such that $\martingale(\omega^n) + \epsilon \geq \liminf\martingale(\omega)$ for all $n \geq N(\omega)$.
Hence, for all $n \geq \max\{M(\omega), N(\omega)\}$, we have that $\martingale(\omega^n) + 2\epsilon \geq f_n(\omega)$.

If $\liminf\martingale(\omega)$ is not real, it can only be equal to $+\infty$ because of the bounded belowness of $\martingale$.
Since $f_0$ is a gamble, there clearly is an index $M(\omega)$ such that $\martingale(\omega^n) \geq f_0(\omega)$ for all $n \geq M(\omega)$, which by the non-increasing character of $\{f_n\}_{n\in\natz}$ implies that $\martingale(\omega^n) \geq f_n(\omega)$ for all $n \geq M(\omega)$.
Hence, we conclude that for all $\omega\in\Gamma(s)$ and any $n'\in\natz$, there is a natural number $n \geq n'$ such that $\martingale(\omega^n) + 2\epsilon \geq f_n(\omega)$.

Let $\ell$ be the length of the string $s$.
Now consider the variable $h\in\setofextvariablesb$ defined by $h(\omega) \coloneqq\inf\{k \geq \ell \colon \martingale(\omega^k) \geq f_k(\omega)-2\epsilon \Big\}$ for all $\omega\in\Gamma(s)$ and $h(\omega) \coloneqq\ell$ for all $\omega \not\in\Gamma(s)$.
Then it is clear from the argument above, that $h$ takes values in the natural numbers.
Moreover, for any $n\in\natz$, $\omega\in\samplespace$ and $\tilde{\omega}\in\Gamma(\omega^n)$, we have that $\omega^n=\tilde{\omega}^n$ and therefore $\martingale(\omega^n)=\martingale(\tilde{\omega}^n)$ and moreover $f_n(\omega)=f_n(\tilde{\omega})$ because $f_n$ is $n$-measurable. 
Hence, for any $n\in\natz$, any $\omega\in\samplespace$ and any $\tilde{\omega}\in\Gamma(\omega^n)$, $f_n(\omega)-\martingale(\omega^n)=f_n(\tilde{\omega})-\martingale(\tilde{\omega}^n)$.
This implies that $h(\omega)=h(\tilde{\omega})$  for any  $\omega\in\Gamma(s)$ and all $\tilde{\omega}\in\Gamma(\omega^{h(\omega)})$.
That $h(\omega)=h(\tilde{\omega})$ holds is obviously also true for any two paths $\omega$ and $\tilde{\omega}$ outside of $\Gamma(s)$.
Hence, the conditions for Lemma~\ref{Lemma: convergence of n-measurables} are satisfied and we can therefore infer that $\sup h < +\infty$.

Let $U \coloneqq\{t\in\situations \colon (\exists \omega\in\samplespace)  t=\omega^{h(\omega)} \}$.
Then $U$ is a cut because $h(\omega)=h(\tilde{\omega})$  for any  $\omega\in\samplespace$ and all $\tilde{\omega}\in\Gamma(\omega^{h(\omega)})$.
It is also a complete cut because $h$ takes values in the natural numbers.
For any situation $t \sqsupseteq U$, let us write $u(t)$ to denote the unique situation in $U$ such that $u(t) \sqsubseteq t$.
We then let $\martingale_U$ be the extended real process defined by
\begin{align*}
\martingale_U(t) \coloneqq
\begin{cases}
\martingale(t) &\text{ if } U \not\sqsubset t; \\
\martingale(u(t)) &\text{ otherwise,}
\end{cases}
\text{ for all } t\in\situations.
\end{align*}
The process $\martingale_U$ is bounded below because $\martingale$ is bounded below.
Moreover, we have that
\begin{align*}
\martingale_U(t\andstate) \coloneqq
\begin{cases}
\martingale(t\andstate) &\text{ if } U \not\sqsubseteq t; \\
\martingale(u(t)) &\text{ otherwise,}
\end{cases}
\text{ for all } t\in\situations.
\end{align*}
Since, for any $t\in\situations$, $\lupprev{t}(\martingale(t\andstate))\leq\martingale(t)$ --- because $\martingale$ is a supermartingale --- and $\lupprev{t}(c)=c$ for all $c\in\extreals_{\geq 0}$ because of \ref{coherence: bounds}, it follows that $\lupprev{t}(\martingale_U(t\andstate))\leq\martingale_U(t)$ for all $t\in\situations$.
Hence, $\martingale_U$ is also a supermartingale.

For any $\omega\in\samplespace$, we now let $u(\omega)$ be the unique situation in $U$ such that $\omega\in\Gamma(u(\omega))$.
Clearly $u(\omega)=\omega^{h(\omega)}$ for all $\omega\in\samplespace$, implying that
\begin{align*}
\lim_{m \to +\infty} \martingale_U(\omega^m)=\lim_{m \to +\infty} \martingale(u(\omega))=\martingale(\omega^{h(\omega)}) \text{ for all } \omega\in\samplespace.
\end{align*} 
Therefore, by definition of $h$, we have that
\begin{align*}
	f_{h(\omega)}(\omega)-\lim_{m \to +\infty} \martingale_U(\omega^m)=f_{h(\omega)}(\omega)-\martingale(\omega^{h(\omega)}) \leq 2 \epsilon \text{ for all } \omega\in\Gamma(s).
\end{align*}
Since $\{f_n\}_{n\in\natz}$ is non-increasing and $h(\omega)\leq\sup h$ for all $\omega\in\samplespace$, we have that $f_{\sup h}(\omega) \leq f_{h(\omega)}(\omega)$.
Note that $f_{\sup h}$ is an element of $\{f_n\}_{n\in\natz}$ because $\sup h$ is a natural number as shown above.
So we have that 
\begin{align*}
	f_{\sup h}(\omega)-\lim_{m \to +\infty} \martingale_U(\omega^m) \leq 2 \epsilon \text{ for all } \omega\in\Gamma(s).
\end{align*}
Hence, $f_{\sup h}-2 \epsilon \leq_s  \lim \martingale_U$, which implies, together with $\martingale_U\in\setofextsupmartb{}$ and \ref{vovk coherence 6}, that $\upprevvovkk(f_{\sup h} \vert s)-2 \epsilon\leq\martingale_U(s)$.
Moreover, $\martingale_U(s)=\martingale(s)$ because clearly $U \not\sqsubset s$, implying that $\upprevvovkk(f_{\sup h} \vert s)-2 \epsilon\leq\martingale(s)\leq\alpha$.
Since $\{f_n\}_{n\in\natz}$ is non-increasing and $\upprevvovkk$ is monotone [\ref{vovk coherence 4}], this on its turn results in $\lim_{n\to+\infty} \upprevvovkk(f_{n} \vert s)-2 \epsilon\leq\alpha$.
This inequality holds for any $\epsilon > 0$ and any real $\alpha > \upprevvovkk(f \vert s)$, so we conclude that indeed $\lim_{n\to+\infty} \upprevvovkk(f_{n} \vert s)\leq\upprevvovkk(f \vert s)$.
\end{proof}

\begin{corollary}\label{Corollary: cont. wrt non-increasing fin-measurables}
For any $s\in\situations$ and any non-increasing sequence $\{f_n\}_{n\in\natz}$ of finitary gambles that converges point-wise to a variable $f\in\setofextvariables$, we have that
\begin{equation*}
\upprevvovkk(f \vert s)=\lim_{n\to+\infty} \upprevvovkk(f_n \vert s).
\end{equation*} 
\end{corollary}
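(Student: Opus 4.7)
The plan is to reduce this corollary to Proposition \ref{Prop: cont. wrt non-increasing n-measurables} by reindexing $\{f_n\}_{n \in \natz}$ into a non-increasing sequence $\{g_m\}_{m \in \natz}$ of genuinely $m$-measurable gambles with the same pointwise limit $f$ and yielding the same limit of upper expectations.

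Since each $f_n$ is finitary, there is some $k_n \in \natz$ for which $f_n$ is $k_n$-measurable. Using that a $k$-measurable variable is automatically $k'$-measurable for any $k' \geq k$, I would first replace $k_n$ by $k_n' \coloneqq \max\{n, k_0, \ldots, k_n\}$, obtaining a non-decreasing sequence $\{k_n'\}_{n \in \natz}$ with $k_n' \geq n$ such that $f_n$ is still $k_n'$-measurable.

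Next, for each $m \in \natz$ I let $n_m$ be the largest $n$ with $k_n' \leq m$, well-defined as soon as $m \geq k_0'$, and set $g_m \coloneqq f_{n_m}$; for $m < k_0'$ I take $g_m \coloneqq \sup f_0$. Then each $g_m$ is an $m$-measurable gamble: for $m \geq k_0'$ it equals $f_{n_m}$, which is $k_{n_m}'$-measurable with $k_{n_m}' \leq m$, and otherwise it is a constant. The sequence $\{g_m\}$ is non-increasing (the map $m \mapsto n_m$ is non-decreasing, $\{f_n\}$ is non-increasing, and $\sup f_0 \geq f_0 = g_{k_0'}$) and converges pointwise to $f$, because $n_m \to +\infty$: were $n_m$ bounded, so would be $k_{n_m+1}'$, contradicting $m < k_{n_m+1}'$.

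Proposition \ref{Prop: cont. wrt non-increasing n-measurables} then yields $\upprevvovkk(f \vert s) = \lim_{m \to +\infty} \upprevvovkk(g_m \vert s)$. Since $\{\upprevvovkk(f_n \vert s)\}_{n \in \natz}$ is itself convergent (non-increasing by \ref{vovk coherence 4}) and the values $\upprevvovkk(g_m \vert s) = \upprevvovkk(f_{n_m} \vert s)$ for $m \geq k_0'$ form a cofinal selection from this convergent sequence, the two limits must agree, giving $\lim_m \upprevvovkk(g_m \vert s) = \lim_n \upprevvovkk(f_n \vert s)$ and closing the argument. The only delicate part is the indexing bookkeeping required to promote finitary gambles to $m$-measurable ones without disturbing monotonicity or the limit; no new analytical ideas enter beyond the preceding proposition.
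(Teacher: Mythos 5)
Your proposal is correct and follows essentially the same route as the paper: both reduce the corollary to Proposition~\ref{Prop: cont. wrt non-increasing n-measurables} by re-indexing the finitary sequence into a non-increasing sequence of genuinely $n$-measurable gambles (the paper via a recursive ``stuttering'' construction, you via an explicit index map $m\mapsto n_m$), and then identify the two limits using monotonicity [\ref{vovk coherence 4}]. Your choice of the constant $\sup f_0$ for the initial padding even handles the preservation of monotonicity slightly more carefully than the paper's arbitrary constant $c$.
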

\begin{proof}
Consider such a sequence $\{f_n\}_{n\in\natz}$ that converges point-wise to some $f\in\setofextvariables$.
We show, in an analogous way as we did for the proof of Lemma~\ref{lemma: limits of n-measurables are equal to limits of finitary}, that there is a non-increasing sequence $\{f'_n\}_{n\in\natz}$ of $n$-measurable gambles that converges point-wise to $f$ and such that moreover $\lim_{n\to+\infty} \upprevvovkk(f'_n \vert s)$ is equal to $\lim_{n\to+\infty} \upprevvovkk(f_n \vert s)$.
Then, by Proposition~\ref{Prop: cont. wrt non-increasing n-measurables}, we indeed have that $\lim_{n\to+\infty} \upprevvovkk(f_n \vert s)=\lim_{n\to+\infty} \upprevvovkk(f'_n \vert s)=\upprevvovkk(f \vert s)$.

Let $f'_0 \coloneqq c$ for some $c\in\reals{}$ and $\gamma(1) \coloneqq 0$.
Let $\{f'_n\}_{n\in\natz}$ be defined by the following recursive expressions:
\begin{flalign*}
f'_n &\coloneqq
\begin{cases}
f_{\gamma(n)} &\text{ if $f_{\gamma(n)}$ is $n$-measurable}; \\
f'_{n-1} &\text{ otherwise,}
\end{cases} \\
\text{ and } \qquad \qquad \qquad \qquad \qquad & & \\
\gamma(n+1) &\coloneqq
\begin{cases}
\gamma(n)+1 &\text{ if $f_{\gamma(n)}$ is $n$-measurable};\\
\gamma(n) &\text{ otherwise.}
\end{cases}
\end{flalign*}
for all $n\in\nats$.
The original sequence $\{f_n\}_{n\in\natz}$ is a subsequence of $\{f'_n\}_{n\in\natz}$. 
Moreover, the additional elements in $\{f'_n\}_{n\in\natz}$ clearly do not change the limit behaviour, nor does it change the non-increasing character.
Hence, the limits $\lim_{n\to+\infty} f'_n$ and $\lim_{n\to+\infty} f_n$ are equal.
The same argument holds to show that $\lim_{n\to+\infty} \upprevvovkk(f'_n \vert s)$ equals $\lim_{n\to+\infty} \upprevvovkk(f_n \vert s)$.
It is moreover clear that $\{f'_n\}_{n\in\natz}$ is a sequence of gambles because $\{f_n\}_{n\in\natz}$ is a sequence of gambles.
Hence, we are left to show that the variables $f'_n$ are $n$-measurable. 
We will do this by induction.
$f'_0=c$ is clearly $0$-measurable.
To prove the induction step, suppose that $f'_{n-1}$ is $(n-1)$-measurable for some $n\in\nats$.
Then either we have that $f_{\gamma(n)}$ is $n$-measurable, which directly implies that $f'_n=f_{\gamma(n)}$ is $n$-measurable.
Otherwise, $f'_n$ is equal to $f'_{n-1}$ implying that $f'_n$ is $(n-1)$-measurable and therefore automatically $n$-measurable.
This concludes the induction step.
\end{proof}

For any $f\in\setofextvariables{}$ and any $c \in \reals{}$, we let $f^{\wedge c}$ be the variable defined by $f^{\wedge c}(\omega) \coloneqq \min \set{f(\omega), c}$ for all $\omega \in \samplespace{}$.
For any countable net $\{f_{(m,n)}\}_{m,n\in\natz}$ of gambles, we say that the gamble $f \coloneqq \lim_{(m,n) \to (+\infty,+\infty)} f_{(m,n)}$ is the Moore-Smith limit of $\{f_{(m,n)}\}_{m,n\in\natz}$ if, for all $\omega \in \samplespace{}$ and all $\epsilon > 0$, there is a couple $(m^\ast, n^\ast) \in \natz^2$ such that $\vert f_{(m,n)}(\omega) - f(\omega) \vert \leq \epsilon$ for all $m \geq m^\ast$ and all $n \geq n^\ast$.
We trivially extend this definition to any $f\in\setofextvariablesb{}$ by additionaly requiring that, for all $\omega \in \samplespace{}$ such that $f(\omega) = + \infty$ and for any $\alpha>0$, there is a couple $(m^\ast, n^\ast) \in \natz^2$ such that $f_{(m,n)}(\omega) \geq \alpha$ for all $m \geq m^\ast$ and all $n \geq n^\ast$.

\begin{lemma}\label{lemma: Moore-smith}
Consider any sequence $\{f_n\}_{n\in\natz}$ in $\setofgambles{}$ that converges point-wise to some extended real variable $f$ that is bounded below.
Then $\lim_{(m,n) \to (+\infty,+\infty)} f_n^{\wedge m} = f$.
\end{lemma}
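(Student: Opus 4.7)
The plan is to unpack the definition of the Moore--Smith limit and verify it by a direct case analysis on the value of $f(\omega)$ at a fixed but arbitrary path $\omega \in \samplespace$. Since $f$ is bounded below, $f(\omega)$ can only be either real or equal to $+\infty$, so exactly two cases arise.

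First I would fix $\omega \in \samplespace$ and treat the case where $f(\omega) \in \reals{}$. Given $\epsilon > 0$, pointwise convergence of $\{f_n(\omega)\}_{n\in\natz}$ to $f(\omega)$ yields an index $n^\ast \in \natz$ such that $\vert f_n(\omega) - f(\omega) \vert \leq \epsilon$ for all $n \geq n^\ast$. Now choose any $m^\ast \in \natz$ with $m^\ast \geq f(\omega) + \epsilon$. Then for every $m \geq m^\ast$ and every $n \geq n^\ast$, we have $f_n(\omega) \leq f(\omega) + \epsilon \leq m^\ast \leq m$, so $f_n^{\wedge m}(\omega) = \min\{f_n(\omega),m\} = f_n(\omega)$, and therefore $\vert f_n^{\wedge m}(\omega) - f(\omega) \vert = \vert f_n(\omega) - f(\omega) \vert \leq \epsilon$, as required.

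Next I would treat the case $f(\omega) = +\infty$. Given any $\alpha > 0$, the divergence $\lim_{n \to +\infty} f_n(\omega) = +\infty$ yields an index $n^\ast \in \natz$ such that $f_n(\omega) \geq \alpha$ for all $n \geq n^\ast$. Choosing $m^\ast \in \natz$ with $m^\ast \geq \alpha$, we get, for every $m \geq m^\ast$ and every $n \geq n^\ast$, that $f_n^{\wedge m}(\omega) = \min\{f_n(\omega),m\} \geq \min\{\alpha, \alpha\} = \alpha$, which is exactly the condition the Moore--Smith limit imposes when $f(\omega) = +\infty$.

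There is no serious obstacle here; the argument is essentially bookkeeping on indices. The only thing to watch is that $m^\ast$ must be chosen \emph{after} $\epsilon$ (or $\alpha$) is given, so that the truncation level $m$ eventually dominates the limiting value $f(\omega)$ (or exceeds the threshold $\alpha$), ensuring that the truncation $\wedge m$ becomes inactive on the tail and the proof reduces to the pointwise convergence of $\{f_n(\omega)\}_{n\in\natz}$ to $f(\omega)$. Combining both cases, the Moore--Smith convergence $\lim_{(m,n) \to (+\infty,+\infty)} f_n^{\wedge m} = f$ follows.
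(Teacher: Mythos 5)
Your proof is correct and follows essentially the same route as the paper's: a pointwise case split on whether $f(\omega)$ is real or $+\infty$, choosing $n^\ast$ from pointwise convergence and then $m^\ast$ large enough that the truncation becomes inactive. The only cosmetic difference is that you take $m^\ast \geq f(\omega) + \epsilon$ directly, whereas the paper takes $m^\ast \geq f_{n^\ast}(\omega) + 2\epsilon$; both work.
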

\begin{proof}
Consider any $\omega \in \samplespace{}$.
If $f(\omega) \in \reals{}$, fix any $\epsilon > 0$.
Then there is an $n^\ast \in \natz$ such that $\vert f_{n}(\omega) - f(\omega) \vert \leq \epsilon$ for all $n \geq n^\ast$.
Hence, for any two $n_1, n_2 \geq n^\ast$, we have that $\vert f_{n_1}(\omega) - f_{n_2}(\omega) \vert \leq 2\epsilon$.
Now choose any $m^\ast \geq f_{n^\ast}(\omega) + 2\epsilon$.
Then clearly $f_n^{\wedge m}(\omega) = f_n(\omega)$ for all $n \geq n^\ast$ and all $m \geq m^\ast$, implying that $\vert f_n^{\wedge m}(\omega) - f(\omega) \vert \leq \epsilon$ for all $n \geq n^\ast$ and all $m \geq m^\ast$.
If $f(\omega) = + \infty$, fix any $\alpha > 0$.
Then there is an $n^\ast \in \natz$ such that $f_{n}(\omega) \geq \alpha$ for all $n \geq n^\ast$.
If we now take $m^\ast \geq \alpha$, then clearly also $f_{n}^{\wedge m}(\omega) \geq \alpha$ for all $n \geq n^\ast$ and all $m \geq m^\ast$.
Hence, we can conclude that indeed $\lim_{(m,n) \to (+\infty,+\infty)} f_n^{\wedge m} = f$.
\end{proof}

\begin{proposition}\label{Prop: limit of n-meas}
For any $s\in\situations$ and any $f\in\setoflimitsoffinmeasb{}$, there is a sequence $\{f_n\}_{n\in\natz}$ of $n$-measurable gambles that is uniformly bounded below and that converges point-wise to $f$, for which 
\begin{equation*}
\lim_{n\to+\infty} \upprevvovkk(f_n \vert s)=\upprevvovkk(f \vert s)
\end{equation*}
and $f_n \leq \sup f$ for all $n \in \natz{}$.
If \/ $\inf f \in \reals{}$, then we can moreover guarantee that $\inf f \leq f_n$ for all $n \in \natz{}$.
\end{proposition}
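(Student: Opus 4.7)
My plan is to combine three ingredients from the preceding material: Lemma~\ref{lemma: limits of n-measurables are equal to limits of finitary}, which supplies a base approximating sequence of $n$-measurable gambles with the right bounds; Theorem~\ref{theorem: upward convergence game}, which handles the ``from below'' non-decreasing approximation of $\upprevvovkk(f\mid s)$; and Corollary~\ref{Corollary: cont. wrt non-increasing fin-measurables}, which handles the refinement of each such approximant by a non-increasing sequence of finitary gambles. A diagonal argument is then needed to splice the two sets of approximants into a single sequence whose $k$-th term is $k$-measurable.

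Concretely, pick a sequence $\{g_n\}_{n\in\natz}$ as produced by Lemma~\ref{lemma: limits of n-measurables are equal to limits of finitary} and set
\begin{equation*}
F_n \coloneqq \inf_{k \geq n} g_k
\qquad \text{and} \qquad
H_n^{(m)} \coloneqq \inf_{n \leq k \leq m} g_k \text{ for } m \geq n.
\end{equation*}
The sequence $\{F_n\}$ is non-decreasing in $\setofextvariablesb{}$ with pointwise limit $f$, so Theorem~\ref{theorem: upward convergence game} gives $\upprevvovkk(F_n \mid s) \to \upprevvovkk(f \mid s)$. For each fixed $n$, $\{H_n^{(m)}\}_{m \geq n}$ is a non-increasing sequence of finitary gambles (each $H_n^{(m)}$ is $m$-measurable) with pointwise limit $F_n$, so Corollary~\ref{Corollary: cont. wrt non-increasing fin-measurables} yields $\upprevvovkk(H_n^{(m)} \mid s) \to \upprevvovkk(F_n \mid s)$ as $m \to \infty$. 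I would then pick a strictly increasing map $n \mapsto m(n)$ with $m(n) \geq n$ and
$$\upprevvovkk(H_n^{(m(n))} \mid s) \leq \upprevvovkk(F_n \mid s) + 1/(n+1),$$
set $N(k) \coloneqq \max\{n : m(n) \leq k\}$ for $k \geq m(0)$, and define $f_k \coloneqq H_{N(k)}^{(m(N(k)))}$, padding the finitely many remaining indices with a constant gamble in $[\inf f, \sup f]$.

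Each $f_k$ is $m(N(k))$-measurable and hence $k$-measurable. Since $N(k) \to \infty$, the sandwich $F_{N(k)} \leq f_k \leq g_{N(k)}$ forces $f_k \to f$ pointwise, while $f_k \leq g_{N(k)} \leq \sup f$ together with the uniform lower bound on $\{g_n\}$ (and the extra guarantee $g_n \geq \inf f$ from Lemma~\ref{lemma: limits of n-measurables are equal to limits of finitary} when $\inf f \in \reals{}$) deliver every required bound on $f_k$. The same sandwich, applied inside $\upprevvovkk(\cdot \mid s)$ via its monotonicity, gives $\upprevvovkk(F_{N(k)}\mid s) \leq \upprevvovkk(f_k\mid s) \leq \upprevvovkk(F_{N(k)}\mid s) + 1/(N(k)+1)$, so both sides converge to $\upprevvovkk(f\mid s)$ and the stated convergence follows.

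The main obstacle is purely structural: the natural intermediate gambles $H_n^{(m)}$ that let us invoke Corollary~\ref{Corollary: cont. wrt non-increasing fin-measurables} are $m$-measurable rather than $n$-measurable, so the two continuity results cannot be composed term by term. Engineering the measurability index via the diagonal choice $N(k)$, while simultaneously matching the rates of the two limits, is what makes the construction work.
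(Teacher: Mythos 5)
Your argument is correct, and it takes a genuinely different route from the paper's. The paper proves the proposition ``from scratch'' at the supermartingale level: for each $m$ it picks a near-optimal supermartingale for the truncation $f^{\wedge m}$, defines a stopping-time-like variable $h_m$ and the associated complete cut, freezes the supermartingale there to read off a finitary gamble $g_{h_m}$ with $\upprevvovkk(g_{h_m}\vert s)\leq\upprevvovkk(f^{\wedge m}\vert s)+\sfrac{1}{m}$, and then uses the Moore--Smith limit lemma for pointwise convergence and Fatou's Lemma (Lemma~\ref{lemma: Fatou general}) for the reverse inequality. You instead observe that the result is a formal consequence of two continuity theorems already on the table: Theorem~\ref{theorem: upward convergence game} applied to the non-decreasing tails $F_n=\inf_{k\geq n}g_k$, and Corollary~\ref{Corollary: cont. wrt non-increasing fin-measurables} applied, for each fixed $n$, to the non-increasing finite minima $H_n^{(m)}=\inf_{n\leq k\leq m}g_k$, spliced together by a diagonal extraction that also repairs the measurability index. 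All the hypotheses check out: the $F_n$ are gambles (so $\upprevvovkk(F_n\vert s)$ is real by \ref{vovk coherence 5}, which is what makes the $\sfrac{1}{(n+1)}$-selection of $m(n)$ legitimate), each $H_n^{(m)}$ is an $m$-measurable gamble, the sandwich $F_{N(k)}\leq f_k\leq g_{N(k)}$ gives both pointwise convergence and, via \ref{vovk coherence 4}, convergence of the upper expectations, and the bounds $\inf f\leq f_k\leq\sup f$ are inherited from Lemma~\ref{lemma: limits of n-measurables are equal to limits of finitary}. There is no circularity, since both ingredients precede this proposition and neither depends on it. Your version is shorter and makes the logical dependencies more transparent; the paper's version is more self-contained in that it exhibits the approximating gambles and witnessing supermartingales explicitly rather than extracting them from two black-box limits. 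The only point to phrase more carefully is the padding of the initial indices: when $\inf f=+\infty$ the interval $[\inf f,\sup f]$ contains no real constant, but in that case the proposition only demands a uniform lower bound and $f_n\leq\sup f=+\infty$, so any real constant above the uniform lower bound of $\{g_n\}$ does the job.
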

\begin{proof}
If $\inf f = +\infty$, meaning that $f = \sup f = +\infty$, it suffices to consider the increasing sequence of gambles $\{n\}_{n \in \nats{}}$ and apply \ref{vovk coherence 5} to see that the proposition holds.
So suppose that $\inf f$ is real. 
We only have to show that there is a sequence of $n$-measurable gambles $\{f_n\}_{n\in\natz}$ that converges point-wise to $f$ such that $\inf f \leq f_n \leq \sup f$ for all $n \in \natz{}$ and $\limsup_{n \to +\infty} \upprevvovkk{}(f_n \vert s) \leq \upprevvovkk{}(f \vert s)$.
Indeed, since $\{f_n\}_{n\in\natz}$ is then uniformly bounded below because $\inf f$ is real, we can then apply Lemma \ref{lemma: Fatou general} to find that also $\liminf_{n \to +\infty} \upprevvovkk{}(f_n \vert s) \geq \upprevvovkk{}(f \vert s)$, and therefore that $\lim_{n \to +\infty} \upprevvovkk{}(f_n \vert s) = \upprevvovkk{}(f \vert s)$.

We assume without loss of generality that $\inf f = 0$.
Indeed, if we prove the proposition for such a variable, we can generalise it towards any extended variable $f^\prime$ such that $\inf f^\prime$ is real because of \ref{vovk coherence 6}.

According to Lemma \ref{lemma: limits of n-measurables are equal to limits of finitary}, there is a sequence of $n$-measurable gambles $\{g_n\}_{n\in\natz}$ such that $\inf f = 0 \leq g_n \leq \sup f$ for all $n \in \natz{}$ and such that $\lim_{n \to +\infty} g_n = f$.
Consider the net $\{g_n^{\wedge m}\}_{m,n\in\natz}$ and the sequence $\{f^{\wedge m}\}_{m \in \natz}$.
Then it is clear that, for any $m \in \natz$, $\{g_n^{\wedge m}\}_{n\in\natz}$ is a sequence of $n$-measurable gambles that converges point-wise to $f^{\wedge m}$.
What we will do next, for every $m \in \natz{}$, is to combine the gambles in $\{g_n^{\wedge m}\}_{n\in\natz}$ to obtain a new finitary gamble $g_{h_m}$, such that $\upprevvovkk{}(g_{h_m} \vert s)  \leq \upprevvovkk{}(f^{\wedge m} \vert s) + \sfrac{1}{m}$ and $0 \leq g_{h_m} \leq \sup f$.
Furthermore, $\{g_{h_m}\}_{m \in \natz{}}$ will converge pointwise to $f$.

% If $\upprevvovkk{}(f \vert s) = +\infty$, we trivially have that $\limsup_{n \to +\infty} \upprevvovkk{}(g_n \vert s) \leq \upprevvovkk{}(f \vert s)$.
% So assume that $\upprevvovkk{}(f \vert s)$ is real and fix any real $\epsilon>0$.
For any $m \in \natz$, since $f^{\wedge m}$ is a gamble [because $f$ is bounded below], we have by \ref{vovk coherence 5} that $\upprevvovkk{}(f^{\wedge m} \vert s)$ is real. 
Hence, there is a supermartingale $\martingale_m \in \setofextsupmartb{}$ such that $\martingale_m(s) \leq \upprevvovkk{}(f^{\wedge m} \vert s) + \sfrac{1}{3m} < +\infty$ and $\liminf \martingale_m \geq_s f^{\wedge m}$. 
Then, because $f^{\wedge m}$ is a gamble, there is, for any $\omega \in \Gamma(s)$, an index $N_m(\omega) \in \natz$ such that $\martingale_m(\omega^n) \geq f^{\wedge m}(\omega) - \sfrac{1}{3m}$ for all $n \geq N_m(\omega)$. 
Since $\{g_n^{\wedge m}\}_{n\in\natz}$ converges point-wise to $f^{\wedge m}$ and $f^{\wedge m}$ is a gamble, there is, for any $\omega \in \Gamma(s)$, an index $M_m(\omega) \in \natz$ such that $\vert f^{\wedge m}(\omega) - g_n^{\wedge m}(\omega) \vert \leq \sfrac{1}{3m}$ for all $n \geq M_m(\omega)$.
Therefore, we have, for any $\omega \in \Gamma(s)$, that
\begin{equation}\label{Eq: proof cont of n-measurables: g_n are real}
\martingale_m(\omega^n) \geq g_n^{\wedge m}(\omega) - \sfrac{2}{3m} \text{ for all } n \geq \max\{N_m(\omega), M_m(\omega)\}.
\end{equation}

Let $\ell$ be the length of the situation $s$. 
We now build an increasing sequence $\{h_m\}_{m \in \natz}$ of non-negative global variables, where $h_0 \coloneqq \ell$ and, for all $m \in \nats{}$, 
\begin{align*}
h_m(\omega) \coloneqq 
\begin{cases}
\inf \Big\{k \in \natz \colon k \geq h_{m-1}(\omega) + 1 \text{ and } \martingale_m(\omega^k) \geq g_k^{\wedge m}(\omega) - \sfrac{2}{3m} \Big\} \text{ if } \omega \in \Gamma(s) \\
\ell + m  \ \text{ otherwise,}
\end{cases}
\end{align*}
for all $\omega \in \samplespace{}$.
% Hence, for every $\omega \in \Gamma(s)$, the function $h_1$ returns the first index $n > \ell$ for which $\martingale(\omega^n) \geq g_n(\omega) - \sfrac{2}{3m}$, the function $h_2$ returns the second index $n > \ell$ for which $\martingale(\omega^n) \geq g_n(\omega) - \sfrac{2}{3m}$, and so on. 
Observe that, for any $m \in \nats{}$, $h_m$ takes values in the natural numbers because Equation \eqref{Eq: proof cont of n-measurables: g_n are real} holds for any $\omega \in \Gamma(s)$ and \emph{all} $n$ larger than $\max\{N_m(\omega), M_m(\omega)\}$.
We show that, for all $m \in \natz$,
\begin{equation}\label{eq: equal indices}
 h_m(\omega) = h_m(\tilde{\omega}) \text{ for any } \omega \in \samplespace{} \text{ and all } \tilde{\omega} \in \Gamma(\omega^{h_m(\omega)}),
\end{equation}
implying that $h_m$ satisfies the conditions in Lemma \ref{Lemma: convergence of n-measurables}.

Equation~\eqref{eq: equal indices} holds trivially for any $m \in \natz{}$, any $\omega \in \samplespace{}\setminus{\Gamma(s)}$ and any $\tilde{\omega} \in \Gamma(\omega^{h_m(\omega)})$.
Indeed, since $h_m(\omega) \geq \ell$ and $\omega \not\in \Gamma(s)$, we also have that $\tilde{\omega} \not\in \Gamma(s)$, implying by the definition of $h_m$ that $h_m(\omega) = h_m(\tilde{\omega}) = \ell + m$.

We now show by induction that Equation~\eqref{eq: equal indices} also holds for any $m \in \natz{}$, any $\omega \in \Gamma(s)$ and any $\tilde{\omega} \in \Gamma(\omega^{h_m(\omega)})$.
It is clear that it holds for $m=0$.
Now suppose that it holds for some $(m-1) \in \natz$ and consider any $\omega \in \Gamma(s)$ and any $\tilde{\omega} \in \Gamma(\omega^{h_m(\omega)})$.
Note that, because $h_m(\omega) > h_{m-1}(\omega)$, we also have that $\tilde{\omega} \in \Gamma(\omega^{h_{m-1}(\omega)})$ and hence, by the inductive hypothesis, that $h_{m-1}(\omega) = h_{m-1}(\tilde{\omega})$.
Moreover, for all $n \leq h_n(\omega)$, we have that $\martingale_m(\omega^{n}) = \martingale_m(\tilde{\omega}^{n})$ because $\omega^{n} = \tilde{\omega}^{n}$, and also $g_{n}^{\wedge m}(\omega) = g_{n}^{\wedge m}(\tilde{\omega})$ since $g_{n}^{\wedge m}$ is $n$-measurable.
Hence, for any $n \leq h_m(\omega)$, we have that $g_n^{\wedge m}(\omega) - \martingale_m(\omega^n) = g_n^{\wedge m}(\tilde{\omega}) - \martingale_m(\tilde{\omega}^n)$, implying, together with $h_{m-1}(\omega) = h_{m-1}(\tilde{\omega})$ and the definition of $h_m$, that $h_m(\omega) = h_m(\tilde{\omega})$.
This concludes our proof that Equation~\eqref{eq: equal indices} holds, which allows us to use Lemma \ref{Lemma: convergence of n-measurables} to obtain that $\sup h_m < +\infty$ for all $m \in \natz$.
% because $\martingale_m(\omega^{n}) \geq f_{n}^{\wedge m}(\omega) - \sfrac{2}{3m}$ by definition of $n$, this implies that $\martingale_m(\tilde{\omega}^{n}) \geq f_{n}^{\wedge m}(\tilde{\omega}) - \sfrac{2}{3m}$
% For any $n\in\natz$, any $\omega \in \samplespace{}$ and any $\tilde{\omega} \in \Gamma(\omega^n)$, we have that $\omega^n = \tilde{\omega}^n$ and therefore $\martingale_m(\omega^n) = \martingale_m(\tilde{\omega}^n)$ and moreover $g_n^{\wedge m}(\omega) = g_n^{\wedge m}(\tilde{\omega})$ because $g_n^{\wedge m}$ is $n$-measurable. 
% Hence, for any $n\in\natz$, any $\omega \in \samplespace{}$ and any $\tilde{\omega} \in \Gamma(\omega^n)$, $g_n^{\wedge m}(\omega) - \martingale_m(\omega^n) = g_n^{\wedge m}(\tilde{\omega}) - \martingale_m(\tilde{\omega}^n)$.
% This means that, for all $m \in \natz$,
% \begin{equation}\label{eq: equal indices}
%  h_m(\omega) = h_m(\tilde{\omega}) \text{ for any } \omega \in \Gamma(s) \text{ and all } \tilde{\omega} \in \Gamma(\omega^{h_m(\omega)}).
% \end{equation}
% The above equality is obviously also true for paths $\omega$ outside of $\Gamma(s)$, and hence, the conditions for Lemma \ref{Lemma: convergence of n-measurables} are satisfied and hence, $\sup h_m < +\infty$ for all $i \in \nats{}$.
So, $\{h_m\}_{m \in \natz}$ is an increasing sequence of non-negative gambles.

Also note that $h_m$ is $(\sup h_m)$-measurable for all $m \in \natz$.
Indeed, for any $\omega \in \Gamma(s)$, consider any $m \in \natz$ and any $\tilde{\omega} \in \Gamma(\omega^{\sup h_m})$, then clearly also $\tilde{\omega} \in \Gamma(\omega^{h_m(\omega)})$ because $h_m(\omega) \leq \sup h_m$, and therefore, by Equation \eqref{eq: equal indices}, we have that $h_m(\omega) =  h_m(\tilde{\omega})$.
Keeping in mind that $h_m$ is constant outside of $\Gamma(s)$, together with the fact that $\sup h_m \geq \ell$, we can indeed conclude that $h_m$ is $(\sup h_m)$-measurable.

Now, consider the sequence of real variables $\{g_{h_m}\}_{m \in \natz}$ defined by
\begin{equation}\label{eq: def sequence of n-measurable gambles}
g_{h_m}(\omega) \coloneqq g_{h_m(\omega)}^{\wedge m}(\omega) \ \text{ for all } \omega \in \samplespace{} \text{ and all } m \in \natz.
\end{equation}
Observe that $0 \leq g_{h_m} \leq \sup f$ for all $m \in \natz{}$ because $0 \leq g_n \leq \sup f$ for all $n \in \natz{}$.
Moreover, we have that, for any $m \in \natz$, $g_{h_m}$ is a $(\sup h_m)$-measurable gamble.
Indeed, consider any $m \in \natz$, any $\omega \in \samplespace{}$ and any $\tilde{\omega} \in \Gamma(\omega^{(\sup h_m)})$.
Since $h_m$ is $(\sup h_m)$-measurable, we have that $h_m(\omega) = h_m(\tilde{\omega})$ and hence $g_{h_m}(\omega) = g_{h_m(\omega)}^{\wedge m}(\omega) = g_{h_m(\tilde{\omega})}^{\wedge m}(\omega)$.
Now, $g_{h_m(\tilde{\omega})}^{\wedge m}$ is by definition $h_m(\tilde{\omega})$-measurable and therefore surely $(\sup h_m)$-measurable.
Hence, it follows that $g_{h_m(\tilde{\omega})}^{\wedge m}(\omega) = g_{h_m(\tilde{\omega})}^{\wedge m}(\tilde{\omega}) = g_{h_m}(\tilde{\omega})$ because $\tilde{\omega} \in \Gamma(\omega^{(\sup h_m)})$.
As a consequence, we find that $g_{h_m}(\omega) = g_{h_m}(\tilde{\omega})$, implying that $g_{h_m}$ is indeed $(\sup h_m)$-measurable.
That it is a gamble, follows directly from its $(\sup h_m)$-measurability and its real valuedness, which on its turn follows from the fact that all $g_n$ are gambles.
Indeed, $g_{h_m}$ can then only take $\vert \statespace{} \vert ^{(\sup h_m)}$ different real values, which by the finiteness of $\statespace{}$ implies that $g_{h_m}$ is bounded.

%  situation $s' \in \statespace{}_{1: \sup h'_i}$.
% Since $h_m$ is $(\sup h'_i)$-measurable, we have that $g_{h_m}(\omega) = f_{h_m(\omega)}(\omega) = f_{h_m(s')}(\omega)$ for all $\omega \in \Gamma(s')$.
% Now, $f_{h_m(s')}$  is $h_m(s')$-measurable and therefore surely $(\sup h'_i)$-measurable because $h_m(s') \leq \sup h_m = \sup h'_i$.
% Hence, $f_{h_m(s')}$ --- and therefore also $g_{h_m}$ --- is constant for all paths $\omega \in \Gamma(s')$.
% This holds for any $s' \in \statespace{}_{1: \sup h_m}$ and as a consequence, $g_{h_m}$ is $(\sup h_m)$-measurable.

Next, we show that $\{g_{h_m}\}_{m \in \natz}$ converges point-wise to $f$.
Fix any $\omega \in \Omega$.
Suppose that $f(\omega)$ is real.
Then it follows from Lemma \ref{lemma: Moore-smith} and the definition of the Moore-Smith limit, that, for any $\epsilon > 0$, there is a couple $(m^\ast, n^\ast) \in \natz^2$ such that $\vert g_n^{\wedge m}(\omega) - f(\omega) \vert \leq \epsilon$ for all $m \geq m^\ast$ and all $n \geq n^\ast$.
In particular, if $p^\ast \coloneqq \max\{m^\ast,n^\ast\}$, we have that $\vert g_n^{\wedge m}(\omega) - f(\omega) \vert \leq \epsilon$ for all $m \geq p^\ast$ and all $n \geq p^\ast$.
Since $h_m(\omega)$ is increasing in $m$, this implies that there also is an $m^\prime \in \natz$ such that $\vert g_{h_m}(\omega) - f(\omega) \vert = \vert g_{h_m(\omega)}^{\wedge m}(\omega) - f(\omega) \vert \leq \epsilon$ for all $m \geq m^\prime$.
Hence, $\lim_{m \to +\infty} g_{h_m}(\omega) = f(\omega)$.
Analogously, we can prove that $\lim_{m \to +\infty} g_{h_m}(\omega) = f(\omega)$ holds if $f(\omega) = +\infty$.
Hence, we indeed conclude that $\{g_{h_m}\}_{m \in \natz}$ converges point-wise to $f$.

Finally, we show that such that $\upprevvovkk{}(g_{h_m} \vert s)  \leq \upprevvovkk{}(f^{\wedge m} \vert s) + \sfrac{1}{m}$ for all $m \in \natz{}$.
To do this, we define the following sequence of cuts:
\begin{equation*}
U_{h_m} \coloneqq \{t \in \situations \colon (\exists \omega \in \samplespace{})  t = \omega^{h_m(\omega)} \} \text{ for all } m \in \natz.
\end{equation*}
Note that $U_{h_m}$ is indeed a cut because of Equation \eqref{eq: equal indices}.
Moreover, it is a complete cut because $h_m(\omega)$ is real for all $\omega \in \samplespace{}$.
For any situation $t \sqsupseteq U_{h_m}$, let us write $u_{h_m}(t)$ to denote the unique situation in $U_{h_m}$ such that $u_{h_m}(t) \sqsubseteq t$.
We use a similar notation for paths;
for any $\omega \in \samplespace{}$, $u_{h_m}(\omega)$ is the unique situation in $U_{h_m}$ such that $\omega \in \Gamma(u_{h_m}(\omega))$.
Then it should be clear that $u_{h_m}(\omega) = \omega^{h_m(\omega)}$ for all $\omega \in \samplespace{}$.
We will use this later on.
Using the cuts $U_{h_m}$, we define, for all $m \in \natz$, the extended real process $\martingale_{h_m}$ by
\begin{align*}
\martingale_{h_m}(t) \coloneqq
\begin{cases}
\martingale_m(t) &\text{ if } U_{h_m} \not\sqsubset t; \\
\martingale_m(u_{h_m}(t)) &\text{ otherwise,}
\end{cases}
\text{ for all } t\in\situations.
\end{align*}

Fix any $m \in \natz$.
Since $\martingale_m$ is bounded below, $\martingale_{h_m}$ is also bounded below.
Moreover, we have that
\begin{align*}
\martingale_{h_m}(t \cdot) \coloneqq
\begin{cases}
\martingale_m(t \cdot) &\text{ if } U_{h_m} \not\sqsubseteq t; \\
\martingale_m(u_{h_m}(t)) &\text{ otherwise,}
\end{cases}
\text{ for all } t\in\situations.
\end{align*}
Since, for any $t \in \situations$, $\lupprev{t}(\martingale_m(t \cdot)) \leq \martingale_m(t)$ --- because $\martingale_m$ is a supermartingale --- and $\lupprev{t}(c) = c$ for all $c \in \extreals{}_{\geq 0}$ because of \ref{coherence: bounds}, we have that $\lupprev{t}(\martingale_{h_m}(t \cdot)) \leq \martingale_{h_m}(t)$ for all $t \in \situations$.
Hence, $\martingale_{h_m} \in \setofextsupmartb{}$.
Furthermore, for all $\omega \in \samplespace{}$,
\begin{align*}
\lim_{n \to +\infty} \martingale_{h_m}(\omega^n) = \lim_{n \to +\infty} \martingale_m(u_{h_m}(\omega)) = \martingale_m(\omega^{h_m(\omega)}),
\end{align*}
where we used the fact that $u_{h_m}(\omega) = \omega^{h_m(\omega)}$. 
Therefore, by definition of $h_m$, we have that
\begin{align*}
	g_{h_m}(\omega) - \lim_{n \to +\infty} \martingale_{h_m}(\omega^n) 
	= g_{h_m(\omega)}^{\wedge m}(\omega) - \martingale_m(\omega^{h_m(\omega)}) \leq \sfrac{2}{3m} \text{ for all } \omega \in \Gamma(s).
\end{align*}
Hence, $g_{h_m} - \sfrac{2}{3m} \leq_s  \lim \martingale_{h_m}$, which implies, together with $\martingale_{h_m} \in \setofextsupmartb{}$ and \ref{vovk coherence 6}, that $\upprevvovkk{}(g_{h_m} \vert s) - \sfrac{2}{3m} \leq \martingale_{h_m}(s)$.
Since moreover $\martingale_{h_m}(s) = \martingale_m(s)$ because clearly $U_{h_m} \not\sqsubset s$, we have that $\upprevvovkk{}(g_{h_m} \vert s) - \sfrac{2}{3m} \leq \martingale_m(s) \leq \upprevvovkk{}(f^{\wedge m} \vert s) + \sfrac{1}{3m}$, so $\upprevvovkk{}(g_{h_m} \vert s) \leq \upprevvovkk{}(f^{\wedge m} \vert s) + \sfrac{1}{m}$.

This holds for every $m \in \natz$, so $\limsup_{m \to +\infty} \upprevvovkk{}(g_{h_m} \vert s) \leq \limsup_{m \to +\infty} \upprevvovkk{}(f^{\wedge m} \vert s)$, implying by \ref{vovk coherence 4} and the fact that $f^{\wedge m} \leq f$ for all $m \in \natz{}$, that $\limsup_{m \to +\infty} \upprevvovkk{}(g_{h_m} \vert s) \leq \upprevvovkk{}(f \vert s)$.
Hence, we have found a sequence $\{g_{h_m}\}_{m \in \natz}$ of $(\sup h_m)$-measurable gambles that converges point-wise to $f$ such that $\inf f = 0 \leq g_{h_m} \leq \sup f$ for all $m \in \natz{}$ and $\limsup_{m \to +\infty} \upprevvovkk{}(g_{h_m} \vert s) \leq \upprevvovkk{}(f \vert s)$.
The last step consists of transforming this sequence of $(\sup h_m)$-measurable gambles into a sequence of $n$-measurable gambles.
This can easily be done as follows.
Let $f_0 \coloneqq c$ for some arbitrary $0 \leq c \leq \sup f$.
Now let 
\begin{equation*}
f_n \coloneqq 
\begin{cases}
g_{h_m} &\text{ if } n = \sup h_m \text{ for some } m \in \natz; \\
f_{n-1} &\text{ otherwise, }
\end{cases}
\text{ for all } n\in\natz.
\end{equation*}
This definition is valid because, for any $n\in\natz$, there is only one $m \in \natz$ such that $n = \sup h_m$ because $\{h_m\}_{m \in \natz}$, and therefore also $\{\sup h_m\}_{m \in \natz}$, is increasing.
It also follows from this argument that $\{f_n\}_{n\in\natz}$ visits all the variables in $\{g_{h_m}\}_{m \in \natz}$ in the same order.
Hence, we have that $\lim_{n \to +\infty} f_n = \lim_{m \to +\infty} g_{h_m} = f$, that $\limsup_{n \to +\infty} \upprevvovkk{}(f_n \vert s) = \limsup_{m \to +\infty} \upprevvovkk{}(g_{h_m} \vert s) \leq \upprevvovkk{}(f \vert s)$, and that $\{f_n\}_{n\in\natz}$ is a sequence of gambles such that $0 \leq f_n \leq \sup f$ for all $n \in \natz{}$.
Let us moreover show by induction that it is a sequence of $n$-measurable gambles.
$f_0 = c$ is clearly $0$-measurable.
To prove the induction step, suppose that $f_n$ is $n$-measurable for some $n\in\natz$.
Then either $f_{n+1} = f_{n}$, which implies that $f_{n+1}$ is $n$-measurable and therefore automatically $(n+1)$-measurable.
Otherwise, we have that $n+1 = \sup h_m$ for some $m \in \natz$, implying that $f_{n+1} = g_{h_m}$ is $(\sup h_m)$-measurable and hence $(n+1)$-measurable.
This completes the induction step.
As a result, we have found a sequence of $n$-measurable gambles $\{f_n\}_{n\in\natz}$ that converges point-wise to $f$, for which $\inf f = 0 \leq f_n \leq \sup f$ for all $n \in \natz{}$ and $\limsup_{n \to +\infty} \upprevvovkk{}(f_n \vert s) \leq \upprevvovkk{}(f \vert s)$.
\end{proof}

\begin{proposition}\label{Prop: limits to general}
Consider any $s\in\situations$ and any $f\in\setofextvariables$. 
Then
\begin{align}
	\upprevvovkk (f \vert s) &=\inf{\Big\{ \upprevvovkk(g \vert s) \colon g\in\setoflimitsoffinmeasb{} \text{ and }  g \geq_s f  \Big\}}.
\end{align}
\end{proposition}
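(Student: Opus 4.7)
The easy direction follows at once from monotonicity \ref{vovk coherence 4}: for any $g \in \setoflimitsoffinmeasb$ with $g \geq_s f$, we have $\upprevvovkk(g \vert s) \geq \upprevvovkk(f \vert s)$, so the infimum on the right-hand side is bounded below by $\upprevvovkk(f \vert s)$. For the converse inequality we may assume $\upprevvovkk(f \vert s) < +\infty$ (else the claim is trivial). Fix any real $\alpha > \upprevvovkk(f \vert s)$. By Proposition~\ref{prop: liminf can be replaced by lim in definition} there is a supermartingale $\martingale \in \setofextsupmartb$ with $\martingale(s) \leq \alpha$ such that $\lim \martingale$ exists in $\extreals$ on every path in $\Gamma(s)$ and satisfies $\lim \martingale \geq_s f$. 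Let $B \in \reals$ be a lower bound for $\martingale$.

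The key step is to globalise this limit into a variable in $\setoflimitsoffinmeasb$. To this end, define the modified process
\[
\martingale'(t) \coloneqq
\begin{cases}
\martingale(t) & \text{if } t \sqsubseteq s \text{ or } t \sqsupseteq s, \\
B & \text{otherwise,}
\end{cases}
\]
and check by a short case analysis that $\martingale' \in \setofextsupmartb$. For $t \sqsupseteq s$ the process coincides with $\martingale$ on $\{t\} \cup \{tx : x \in \statespace\}$, so the supermartingale inequality is inherited from $\martingale$. For $t \sqsubset s$ exactly one child of $t$ lies on the path to $s$ while the others are incomparable with $s$, so $\martingale'(t\cdot) \leq \martingale(t\cdot)$ pointwise; monotonicity \ref{coherence: monotonicity} of $\lupprev{t}$ then yields $\lupprev{t}(\martingale'(t\cdot)) \leq \lupprev{t}(\martingale(t\cdot)) \leq \martingale(t) = \martingale'(t)$. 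Finally, if $t$ is incomparable with $s$ then every child $tx$ must also be incomparable with $s$ (otherwise $t$ itself would be comparable with $s$), so $\martingale'(t\cdot) \equiv B$ and \ref{coherence: const is const} gives $\lupprev{t}(\martingale'(t\cdot)) = B = \martingale'(t)$. Moreover $\martingale'(s) = \martingale(s) \leq \alpha$ and $\martingale' \geq B$.

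Now set $g \coloneqq \lim \martingale'$. This limit exists at every $\omega \in \samplespace$: on $\Gamma(s)$ it coincides with $\lim \martingale$, while for $\omega \notin \Gamma(s)$ the situation $\omega^n$ is incomparable with $s$ for every $n > \vert s \vert$, so $\martingale'(\omega^n) = B$ eventually. Hence $g$ is the point-wise limit of the finitary variables $\{\martingale'(X_{1:n})\}_{n \in \natz}$ and is bounded below by $B$, so $g \in \setoflimitsoffinmeasb$; moreover $g \geq_s f$ since $g =_s \lim \martingale \geq_s f$. Because $\liminf \martingale' =_s \lim \martingale' =_s g$, Definition~\ref{def:upperexpectation2} applied to the witness $\martingale'$ yields $\upprevvovkk(g \vert s) \leq \martingale'(s) = \martingale(s) \leq \alpha$. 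Letting $\alpha$ decrease to $\upprevvovkk(f \vert s)$ gives the converse inequality. The main obstacle is precisely this globalisation: the naive candidate $g = \lim \martingale$ is only sensibly defined on $\Gamma(s)$, and one has to freeze $\martingale$ to a constant outside $\Gamma(s)$ — and verify that this preserves the supermartingale property — in order to land inside $\setoflimitsoffinmeasb$ without inflating $\martingale(s)$.
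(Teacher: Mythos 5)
Your proof is correct and follows essentially the same route as the paper's: both invoke Proposition~\ref{prop: liminf can be replaced by lim in definition} to obtain a supermartingale whose limit exists on $\Gamma(s)$ and dominates $f$ there, freeze it to a constant outside $\Gamma(s)$ so that the limit exists everywhere, and take $g$ to be that limit, which is then a bounded-below pointwise limit of the $n$-measurable variables $\martingale'(X_{1:n})$. The only cosmetic differences are that the paper freezes to $\martingale(s)$ rather than a lower bound $B$ and leaves the supermartingale verification of the modified process implicit, whereas you spell it out.
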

\begin{proof}
% Again, we prove this for $s=\Box$. 
% The proof is trivially extended towards any situation $s\in\situations$.
Because $\upprevvovkk$ is monotone [\ref{vovk coherence 4}], we have, for any $g\in\setoflimitsoffinmeasb{}$ such that $f \leq_s g$, that
\begin{align*}
\upprevvovkk(f \vert s)\leq\upprevvovkk(g \vert s).
\end{align*}
It therefore follows immediately that
\begin{align*}
\upprevvovkk (f \vert s) \leq\inf{\Big\{ \upprevvovkk(g \vert s) \colon g\in\setoflimitsoffinmeasb{} \text{ and }  g \geq_s f  \Big\}}.
\end{align*}
It remains to prove the converse inequality.

Consider any $\martingale\in\setofextsupmartb{}$ such that $\lim \martingale(\omega)$ exists for all $\omega\in\Gamma(s)$ and such that $\lim \martingale \geq_s f$.
Then we can also guarantee that $\lim \martingale$ exists everywhere by simply redefining $\martingale(t) \coloneqq\martingale(s)$ for all situations $t \not\sqsupseteq s$. 
Clearly, $\martingale$ then remains to be a supermartingale such that $\lim \martingale \geq_s f$.
Let $\{g_n\}_{n \in \natz{}}$ be the sequence defined by $g_n(\omega) \coloneqq \martingale{}(\omega^n)$ for all $n \in \natz{}$ and all $\omega \in \samplespace{}$.
Then it is clear that $\{g_n\}_{n \in \natz{}}$ is a sequence of $n$-measurable, and therefore finitary, extended real variables that is uniformly bounded below.
Moreover, we have that $\lim_{n \to +\infty} g_n(\omega) = \lim_{n \to +\infty} \martingale{}(\omega^n) \eqqcolon g(\omega)$ for all $\omega \in \samplespace{}$, implying that $g\in\setoflimitsoffinmeasb{}$ and, because $\lim \martingale \geq_s f$, that $g \geq_s f$.
It moreover follows from Definition \ref{def:upperexpectation2} that $\upprevvovkk(g \vert s)\leq\martingale(s)$ because $\lim \martingale \geq_s g$.
This implies that
\begin{align*}
\inf{\Big\{ \upprevvovkk(g \vert s) \colon g\in\setoflimitsoffinmeasb{} \text{ and }  g \geq_s f  \Big\}}\leq\martingale(s).
\end{align*}
Since this holds for any $\martingale\in\setofextsupmartb{}$ such that $\lim \martingale$ exists within $\Gamma(s)$ and $\lim \martingale \geq_s f$, it follows from Proposition~\ref{prop: liminf can be replaced by lim in definition} that
\begin{align*}
\inf{\Big\{ \upprevvovkk(g \vert s) \colon g\in\setoflimitsoffinmeasb{} \text{ and }  g \geq_s f  \Big\}}\leq\upprevvovkk (f \vert s).
\end{align*}
\end{proof}

\bibliographystyle{plain}
\bibliography{references}

\begin{thebibliography}{10}

\bibitem{Augustin:2014di}
T.~Augustin, F.~Coolen, G.~de~Cooman, and M.~Troffaes.
\newblock {\em {Introduction to Imprecise Probabilities}}.
\newblock John Wiley {\&} Sons, Chichester, May 2014.

\bibitem{DeBock2014Continuity}
J.~De~Bock and G.~de~Cooman.
\newblock Continuity of imprecise stochastic processes with respect to the
  pointwise convergence of monotone sequences.
\newblock arXiv:1402.3056, 2014.

\bibitem{DECOOMAN201618}
G.~{D}e Cooman, J.~{D}e Bock, and S.~Lopatatzidis.
\newblock Imprecise stochastic processes in discrete time: global models,
  imprecise {M}arkov chains, and ergodic theorems.
\newblock {\em International Journal of Approximate Reasoning}, 76(C):18 -- 46,
  2016.

\bibitem{Quaeghebeur:2012vw}
E.~Quaeghebeur, G.~de~Cooman, and F.~Hermans.
\newblock {Accept {\&} reject statement-based uncertainty models}.
\newblock {\em International Journal of Approximate Reasoning}, 57:69--102,
  2015.

\bibitem{Shafer:2005wx}
G.~Shafer and V.~Vovk.
\newblock {\em {Probability and Finance}}.
\newblock It's Only a Game! John Wiley {\&} Sons, 2005.

\bibitem{Vovk2019finance}
G.~Shafer and V.~Vovk.
\newblock {\em Game-Theoretic Probability and Finance}.
\newblock Wiley, 2018.
\newblock `forthcoming'.

\bibitem{shafer2011levy}
G.~Shafer, V.~Vovk, and A.~Takemura.
\newblock Lévy's zero-one law in game-theoretic probability.
\newblock arXiv:0905.0254 [math.PR], 2011.

\bibitem{Tjoens2019NaturalExtensionISIPTA}
N.~T'Joens, J.~De~Bock, and G.~de~Cooman.
\newblock In search of a global belief model for discrete-time uncertain
  processes.
\newblock Submitted for possible publication in the conference proceedings of
  ISIPTA 2019.

\bibitem{ShaferVovk2014ITIP}
V.~Vovk and G.~Shafer.
\newblock {Game-theoretic Probability}.
\newblock In Thomas Augustin, Frank P.~A. Coolen, Gert de~Cooman, and Matthias
  C.~M. Troffaes, editors, {\em Introduction to Imprecise Probabilities}, pages
  114--134. John Wiley {\&} Sons, Chichester, May 2014.

\bibitem{Walley:1991vk}
P.~Walley.
\newblock {\em {Statistical Reasoning with Imprecise Probabilities}}.
\newblock Chapman and Hall, London, 1991.

\end{thebibliography}
\end{document}